\newtheorem{theorem}{Theorem}[section]
\newtheorem{lemma}[theorem]{Lemma}
\newtheorem{proposition}[theorem]{Proposition}
\numberwithin{equation}{section}
\newtheorem{definition}[theorem]{Definition}
\def\R{\mathbb R}
\def\D{\mathbb D}
\DeclareMathOperator{\imm}{Im}
\DeclareMathOperator{\diag}{diag}
\theoremstyle{remark}
\newtheorem*{remarks}{Remarks}
\newtheorem*{remark}{Remark}
\newcommand{\bbC}{{\mathbb{C}}}
\newcommand{\bbD}{{\mathbb{D}}}
\newcommand{\bbE}{{\mathbb{E}}}
\newcommand{\bbH}{{\mathbb{H}}}
\newcommand{\bbN}{{\mathbb{N}}}
\newcommand{\bbR}{{\mathbb{R}}}
\newcommand{\calJ}{{\mathcal J}}
\newcommand{\bdone}{{\boldsymbol{1}}}
\newcommand{\bdnot}{{\boldsymbol{0}}}
\newcommand{\lla}{\left\langle\!\left\langle}
\newcommand{\rra}{\right\rangle\!\right\rangle}
\newcommand{\dt}{\frac{d\theta}{2\pi}}
\newcommand{\spann}{\text{\rm{span}}}
\newcommand{\rank}{\text{\rm{rank}}}
\newcommand{\ran}{\text{\rm{Ran}\,}}
\newcommand{\beq}{\begin{equation}}
\newcommand{\eeq}{\end{equation}}
\newcommand{\ba}{\begin{align*}}
\newcommand{\ea}{\end{align*}}
\newcommand{\veps}{\varepsilon}
\newcommand{\norm}[1]{\lVert#1\rVert}
\DeclareMathOperator{\imag}{Im}
\DeclareMathOperator*{\esssup}{ess\,supp}
\DeclareMathOperator*{\res}{Res}
\begin{document}

\title[Jost asymptotics for matrix orthogonal polynomials]{Jost asymptotics for matrix orthogonal polynomials on the real line}


\author{Rostyslav Kozhan}
\date{\today}

\begin{abstract}
We obtain matrix-valued Jost asymptotics for block Jacobi matrices under an $L^1$-type condition on Jacobi coefficients, and give a necessary and sufficient condition for an analytic matrix-valued function to be the Jost function of a block Jacobi matrix with exponentially converging parameters. This establishes the matrix-valued analogue of Damanik--Simon \cite{DS2}.

The above results allow us to fully characterize the Weyl--Titchmarsh $m$-functions of Jacobi matrices with exponentially converging parameters.
\end{abstract}





\maketitle


\section{Motivation}

The main aim of this paper is to generalize some known properties from the theory of orthogonal polynomials on the real line to the matrix-valued case. The basic construction of the matrix-valued orthogonal polynomials follows the identical lines as in the scalar case. It is presented in the next section (see \cite{DPS} for a more extensive review). This  will lead us to considering the following question.
We will be studying the $l\times l$ matrix-valued solutions $(f_n(E))_{n=0}^\infty$ of
\begin{equation*}
f_{n+1}(E)A_{n}^*+f_n(E)(B_{n}-\mathbf{1}E )+f_{n-1}(E)A_{n-1}=\mathbf{0}, \quad n=1,2,\ldots
\end{equation*}
where $A_n$ are invertible $l\times l$ matrices, $B_n$ are Hermitian $l\times l$ matrices, $\bdone$ is the $l\times l$ identity matrix, and $E$ a complex number.

One of the possible solutions to this recurrence 
is the sequence of the (right) orthonormal polynomials $f_n(E)=\mathfrak{p}_{n-1}^R(E,\calJ)$ associated with the block Jacobi matrix
\begin{equation}\label{in2.5}
\mathcal{J}=\left(
\begin{array}{cccc}
B_1&A_1&\mathbf{0}& \\
A_1^* &B_2&A_2&\ddots\\
\mathbf{0}&A_2^* &B_3&\ddots\\
 &\ddots&\ddots&\ddots\end{array}\right).
\end{equation}
(see \eqref{p3_recur} below).

Another natural choice however is the unique (up to a multiplicative constant) decaying Weyl solution, which exists for all $E$ with $\imag E\ne 0$. If the matrix $\calJ$ is reasonably close to the ``free'' block Jacobi matrix $\calJ_0$ (which is, block Jacobi matrix with $A_n\equiv \bdone$, $B_n\equiv\bdnot$), then its (normalized) Weyl solution $(u_n)_{n=0}^\infty$ converges to the Weyl solution of $\calJ_0$. In this case we call $(u_n)_{n=0}^\infty$ the Jost solution (see Definition \ref{def2} below), and we say that Jost asymptotics holds. By the Jost function we will simply call the first element $u_0$ (see Definition \ref{def3} below).

Jost solution and Jost function are natural objects of study for various reasons. One of the most immediate ones is that  Jost asymptotics is essentially equivalent to the existence of the limit $z^n \mathfrak{p}^R_{n}(z+z^{-1})$ (the so-called Szeg\H{o} asymptotics). The existence of this limit has been a popular topic for many years, starting from Szeg\H{o}'s 1920 paper (see \cite{Sz}). The relevant most recent results in the scalar and matrix-valued settings can be found in \cite{PY, KS, DS1} and \cite{AN,K_sz} respectively. Another reason for interest is that properties of the Jost solution are closely related to the properties of the Weyl--Titchmarsh function
\begin{equation*}
\mathfrak{m}(z)=\int \frac{d\mu(x)}{x-z},
\end{equation*}
where $\mu$ is the ($l\times l$ matrix-valued) spectral measure of $\calJ$. This is a meromorphic Herglotz function
on $\bbC\setminus \esssup\mu$. Recall (see more details in Section 2.3) that a Herglotz function is a function satisfying $\imag m(z)>0$ if
$\imag z>0$. Conversely, any Herglotz function has the associated measure $\mu$, and it could be of interest to study the correspondence between properties of $m$ and of $\calJ$.

Jost asymptotics has been a very well studied topic for the scalar case (see \cite{DS1, DS2, GC} and references therein), but the matrix-valued analogue still lacks the complete theory.


The results of this paper can be divided into three parts.

Part I of the results (Section 3.1) deals with the \textit{direct} problem: we prove that Jost asymptotics holds under an $L^1$-type condition \eqref{el1} on the Jacobi parameters $A_n, B_n$, and establish numerous properties of the Jost function and Jost solution.

Part II of the results (Section 3.2) deals with the \textit{inverse} problem: we characterize in an if-and-only-if fashion all possible Jost functions of exponentially small perturbations of $\calJ_0$.

Finally, the results from  Part II allow us to characterize in an if-and-only-if fashion all possible Weyl--Titchmarsh functions of 
exponentially small perturbations of $\calJ_0$. 
Another, and perhaps more interesting, way of looking at it, is that we can link properties of a meromorphic Herglotz function and the asymptotic behavior of the Jacobi coefficients of the associated measure. This constitutes Part III (Section 3.3).

Some of the results in Part I already appeared in Geronimo's paper \cite{Geronimo_matrix}, and this will be mentioned and elaborated later as we state the results.

Part I and Part II follow closely the scalar analogues of Damanik--Simon \cite{DS2} (see also \cite[Chapter 13]{OPUC2}). Apart from numerous technical complications, the ideas of the proofs are borrowed from the mentioned paper.

Finally, the results of Part III appear to be new even in the scalar case.

The topic of meromorphic continuations and orthogonal polynomials on the real line is further studied in the forthcoming paper \cite{K_merom}, which deals with perturbations of periodic Jacobi matrices. The main ingredients of the proofs are the current Part III results and the ``Magic Formula'' of Damanik--Killip--Simon \cite{DKS}.

The organization of the paper is as follows. We cover some basics of matrix-valued orthogonal polynomials, Herglotz functions and matrix-valued functions in Section 2. Some auxiliary results are also collected there. In the three subsections of section 3 we state the main results corresponding to the Parts I, II, III. Then in Section 4, 5, and 6, respectively, we prove them. Note that Part I has many results scattered throughout Section 4, and it would be too space-consuming to list them all in the subsection 3.1.

\medskip

\textbf{Acknowledgements}: the results of this paper appeared as a part of the author's Ph.D thesis at the California Institute of Technology. He would like to express his gratitude to the Caltech's Math Department for all the help and hospitality, and especially to his advisor Prof Barry Simon.
The author would also like to thank Maxim Zinchenko for his observation that the use of the troublesome finite-stripping lemma (\cite[Thm 3.1]{DS2}) can be avoided when dealing with bound states. Later this lemma was eventually generalized to the matrix-valued case in the recent Simon's \cite{Simon_removal}.

This is the updated version that fixes an error in Lemma~\ref{p3_lm00} and some other minor typos and inaccuracies. The author is grateful to J. Geronimo who pointed out that there was an error in Lemma~\ref{p3_lm00}.

%
%
%
%
%


\begin{section}{Preliminaries}
\begin{subsection}{Orthogonal Polynomials on the Real Line}
We will introduce some basics of the theory of matrix-valued orthogonal polynomials on the real line here. The scalar theory is of course a special case $l=1$. We will mention some of the differences between the scalar and matrix-valued cases as we proceed.

The proofs of most of the results listed here, along with more details, can be found in the paper by Damanik--Pushnitski--Simon \cite{DPS}.

Let $\mu$ be an $l\times l$ matrix-valued Hermitian positive semi-definite finite measure on $\bbR$ of compact support, normalized
by $\mu(\bbR) = \bdone$, where $\bdone$ is the $l\times l$ identity matrix. For any $l\times l$ dimensional matrix functions $f,g$, define
\begin{align*}
\lla f,g\rra_{L^2(\mu)}&=\int f(x)^* d\mu(x) g(x);\label{p2_eq1.1}
\end{align*}
where ${}^*$ is the Hermitian conjugation.

What we have defined here is the right product of $f$ and $g$, as opposed to the left product $\int f(x)d\mu(x) g(x)^*$, whose properties are completely analogous.

Measure $\mu$ is called non-trivial if $|| \lla f,f\rra_{L^2(\mu)}||>0$ for all matrix-valued polynomials $f$. From now on assume $\mu$ is non-trivial. Then the standard arguments show that there exist unique (right) monic polynomials $\mathbf{P}^R_n$ of degree $n$ satisfying
\begin{equation*}
\lla \mathbf{P}^R_n,f \rra_{L^2(\mu)}=0 \quad \mbox{ for any polynomial } f \mbox{ with } \deg f<n.
\end{equation*}

For any choice of unitary $l\times l$ matrices $\tau_n$ (we demand $\tau_0=\bdone$), the polynomials
\begin{equation}\label{in2.3}
\mathfrak{p}^R_n=\mathbf{P}^R_n \lla \mathbf{P}^R_n,\mathbf{P}^R_n \rra_{L^2(\mu)}^{-1/2}\tau_n
\end{equation}
are orthonormal:
\begin{equation*}
\lla \mathfrak{p}^R_n,\mathfrak{p}^R_m \rra_{L^2(\mu)}=\delta_{n,m} \bdone,
\end{equation*}
where $\delta_{n,m}$ is the Kronecker $\delta$.
Using orthogonality one can show that they satisfy the (Jacobi) recurrence relation
\begin{equation}\label{p3_recur}
x \mathfrak{p}^R_n(x)=\mathfrak{p}^R_{n+1}(x)A_{n+1}^* +\mathfrak{p}^R_n(x)B_{n+1}+\mathfrak{p}^R_{n-1}(x) A_n, \quad n=1,2,\ldots,
\end{equation}
where matrices $A_n=\lla \mathfrak{p}^R_{n-1},x\mathfrak{p}^R_n \rra_{L^2(\mu)}$, $B_n=\lla \mathfrak{p}^R_{n-1},x\mathfrak{p}^R_{n-1} \rra_{L^2(\mu)}$ are called the Jacobi parameters (with $\mathfrak{p}^R_{-1}=\bdnot$, $A_0=\bdone$, the relation holds for $n=0$ too).

In the exact same fashion, just using the left product instead of right, one can define the left monic orthogonal polynomials $\mathbf{P}^L_n$ and left orthonormal polynomials $\mathfrak{p}^L_n$. It is not hard to see that $\mathbf{P}^L_n(z)=\mathbf{P}^R_n(\bar{z})^*$ and $\mathfrak{p}^L_n(z)=\mathfrak{p}^R_n(\bar{z})^*$.


Whenever we write $\mathfrak{p}_n$ without the sup-index ${}^R$ or ${}^L$, we will mean the right orthonormal polynomial $\mathfrak{p}^R_n$.

Note that if $l=1$ it is natural to choose $\tau_n=1$ in \eqref{in2.3}. In particular this gives the equality of left and right orthonormal polynomials; the Jacobi parameters become real, and $A_n$'s positive numbers. This choice of $\tau_n$'s is not necessarily the best if $l>1$. See subsection 2.2 for the further discussion.

We can arrange sequences  $\{A_n\}_{n=1}^\infty$, $\{B_n\}_{n=1}^\infty$ into an infinite matrix \eqref{in2.5}
which we call a \textit{block} Jacobi matrix, if $l>1$.

If $A_n\equiv \bdone$, $B_n\equiv \bdnot$ the corresponding block Jacobi matrix is called free.

Conversely, any block Jacobi matrix \eqref{in2.5} with invertible $\{A_n\}_{n=1}^\infty$ and Hermitian $\{B_n\}_{n=1}^\infty$ gives rise to a matrix-valued Hermitian measure $\mu$ via the spectral theorem. If $l=1$ this establishes a one-to-one correspondence between all non-trivial compactly supported measures and  bounded Jacobi matrices. If $l>1$ the same holds, except now the correspondence is with the set of \textit{equivalence classes} of bounded block Jacobi matrices (see Definition \ref{equivalent}). This has the name of Favard's Theorem (see \cite{DPS} for a proof in the matrix-valued case).

Since we will be considering perturbations of the free case in Sections 1.3.2--1.3.4, the following two classical results will prove to be useful.

\begin{lemma}[Weyl's Theorem]\label{weyl} If $A_n\to\bdone$, $B_n\to\bdnot$, then $\esssup \mu=[-2,2]$.
\end{lemma}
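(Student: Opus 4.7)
The plan is to reduce this to the classical statement that the essential spectrum of an operator is invariant under compact perturbations, combined with the known scalar case.

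First, I would identify $\esssup \mu$ with the essential spectrum $\sigma_{\mathrm{ess}}(\calJ)$. Since $\calJ$ is bounded self-adjoint on $\ell^2(\bbN, \bbC^l)$ with matrix-valued spectral measure $\mu$, the spectral theorem gives $\sigma(\calJ) = \supp \mu$ and $\sigma_{\mathrm{ess}}(\calJ) = \esssup \mu$ (points $E$ such that $\mu((E-\veps, E+\veps))$ has nonzero trace, or equivalently is not of finite rank, for every $\veps > 0$ — in the bounded-matrix-valued setting this matches the standard operator-theoretic essential spectrum).

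Next, I would write $\calJ = \calJ_0 + \calK$, where $\calJ_0$ is the free block Jacobi matrix and $\calK$ is the block-tridiagonal operator whose diagonal blocks are $B_n$ and whose off-diagonal blocks are $A_n - \bdone$ (and their adjoints). The hypotheses $A_n \to \bdone$ and $B_n \to \bdnot$ mean that the block entries of $\calK$ tend to zero in operator norm. Truncating $\calK$ after the $N$-th block row and column produces a finite-rank operator $\calK_N$, and $\|\calK - \calK_N\| \to 0$ as $N \to \infty$ by a straightforward estimate (the operator norm of the tail is controlled by $\sup_{n > N}(\|A_n - \bdone\| + \|B_n\|)$, which vanishes). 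Hence $\calK$ is compact. Weyl's theorem on the invariance of the essential spectrum under compact self-adjoint perturbations then yields
\[
\sigma_{\mathrm{ess}}(\calJ) = \sigma_{\mathrm{ess}}(\calJ_0).
\]

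Finally, I would compute $\sigma_{\mathrm{ess}}(\calJ_0) = [-2,2]$. The free block Jacobi matrix acts on $\ell^2(\bbN, \bbC^l) \cong \bigoplus_{j=1}^l \ell^2(\bbN)$ componentwise, so it is unitarily equivalent to $l$ copies of the scalar free Jacobi matrix $J_0$ (with $a_n \equiv 1$, $b_n \equiv 0$). Since $\sigma(J_0) = [-2,2]$ is a well-known classical fact (and is purely absolutely continuous, hence equal to its essential spectrum), we obtain $\sigma_{\mathrm{ess}}(\calJ_0) = [-2,2]$.

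The only real subtlety I anticipate is the identification of $\esssup \mu$ with $\sigma_{\mathrm{ess}}(\calJ)$ in the matrix-valued setting, which should be handled either by appealing to the framework set up in \cite{DPS} or by diagonalizing $\mu$ locally. The compactness of $\calK$ and the computation of $\sigma(\calJ_0)$ are routine.
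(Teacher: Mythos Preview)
Your proposal is correct and is precisely the standard argument one has in mind here. The paper does not actually write out a proof of this lemma; it simply remarks that ``Lemma~\ref{weyl} is trivial,'' so your compact-perturbation argument via the classical Weyl theorem is exactly what is intended.
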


\begin{lemma}[Denisov--Rakhmanov Theorem]\label{denisov} Assume $\mu$ is a non-trivial $l\times l$ matrix-valued measure on $\bbR$ with associated block Jacobi matrix $\calJ$ of type $3$ such that $\esssup \mu=[-2,2]$ and $\det\left(\frac{d\mu(x)}{dx}\right)>0$ a.e. on $[-2,2]$. Then $A_n\to\bdone$, $B_n\to\bdnot$.
\end{lemma}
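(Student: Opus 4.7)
The plan is to argue by contradiction via the method of \emph{right limits}. Suppose $A_n\not\to\bdone$ or $B_n\not\to\bdnot$. Since $\esssup\mu=[-2,2]$ is bounded, the operator $\calJ$ is bounded and hence $\{A_n\}_{n\geq 1},\{B_n\}_{n\geq 1}$ are uniformly bounded sequences of $l\times l$ matrices. By a diagonal extraction I pass to a subsequence $n_k\to\infty$ along which $A_{n_k+j}\to A^{(r)}_j$ and $B_{n_k+j}\to B^{(r)}_j$ for every $j\in\bbZ$, producing a two-sided block Jacobi matrix $\calJ^{(r)}$ on $\ell^2(\bbZ)\otimes\bbC^l$ that, by assumption, is \emph{not} the free two-sided Jacobi matrix.

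The first easy step is that $\spec(\calJ^{(r)})\subset[-2,2]$. This follows from a standard Weyl-sequence argument: any point in $\spec(\calJ^{(r)})$ admits a compactly supported almost-eigenvector, and translating it back into $\calJ$ for large $k$ produces approximate eigenvectors of $\calJ$, placing the point in $\spec_\ess(\calJ)=[-2,2]$ (using Lemma \ref{weyl}).

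The decisive ingredient is a matrix-valued analogue of \emph{Remling's theorem}: any right limit of a one-sided block Jacobi matrix whose spectral measure satisfies $\det(d\mu_\ac/dx)>0$ a.e.\ on $[-2,2]$ must be \emph{reflectionless} on $[-2,2]$, in the sense that the right and left half-line matrix Weyl $m$-functions of $\calJ^{(r)}$ are negative Hermitian conjugates of one another for a.e.\ $x\in(-2,2)$. I would establish this by showing locally uniform convergence off $\bbR$ of the matrix $m$-functions of the stripped matrices (obtained by removing the first $n_k$ blocks) to the half-line $m$-function of $\calJ^{(r)}$, combined with lower semi-continuity of the entropy-type integral $\int_{-2}^2 \log\det\imag m(x+i0)\,dx$ under these weak-$*$ limits.

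Finally, I would invoke the rigidity of the reflectionless class over a single interval: up to the gauge freedom that the type~3 normalization eliminates, the only whole-line bounded block Jacobi matrix reflectionless on $[-2,2]$ with spectrum in $[-2,2]$ is the free matrix $\calJ_0$. This contradicts the non-triviality of $\calJ^{(r)}$ and completes the proof. The principal obstacle is the matrix-valued Remling step: controlling $\det\imag m$ under weak-$*$ limits is considerably more delicate than its scalar counterpart, since one must prevent $\imag m$ from degenerating on a set of positive measure while preserving the Herglotz structure in the limit; essential-spectrum inclusion and reflectionless rigidity are reasonably routine matrix analogues of the corresponding scalar arguments.
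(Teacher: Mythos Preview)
The paper does not prove this lemma at all: it is quoted as a known result and attributed to \cite{DKS} (with antecedents in \cite{yakhlef-marc,denisov,rakhmanov}). So there is no ``paper's own proof'' to compare against; the relevant comparison is with the literature the paper cites, where the argument proceeds via ratio asymptotics and Szeg\H{o}-type analysis rather than via right limits.

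Your strategy via right limits and a matrix Remling theorem is a legitimate alternative route in principle, but what you have written is a plan, not a proof. The two load-bearing steps are both left as assertions. First, the matrix-valued Remling step: you say you ``would establish'' reflectionlessness of the right limit by combining convergence of stripped $m$-functions with lower semicontinuity of $\int\log\det\imag m$, and you yourself flag this as the principal obstacle. That is accurate --- in the matrix case one must control the full matrix $\imag m$, not just its determinant, and the passage from a.e.\ positivity of $\det f$ to reflectionlessness of every right limit requires a genuine argument (Kotani-type theory for block Jacobi matrices) that you have not supplied. Second, the rigidity step --- that the only bounded whole-line block Jacobi matrix reflectionless on $[-2,2]$ with spectrum in $[-2,2]$ is the free one --- is also nontrivial in the matrix setting and needs the type~$3$ normalization in an essential way; you invoke it but do not prove it. Until those two ingredients are actually carried out, the argument has a real gap.
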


Definition \ref{types} below explains what it means for a Jacobi matrix to be of type $3$.

Lemma \ref{weyl} is trivial, while Lemma \ref{denisov}, in the form given here, is proven in \cite{DKS} (see also \cite{yakhlef-marc}, as well as \cite{denisov, rakhmanov}).

Define the (Weyl-Titchmarsh) \textbf{${m}$-function} of the measure $\mu$ to be the meromorphic in $\bbC\setminus \esssup\mu$ matrix-valued function
\begin{equation}\label{p3_m-func}
\mathfrak{m}(z)=\int \frac{d\mu(x)}{x-z}.
\end{equation}

Define $\mathcal{J}^{(1)}$ to be the ``once-stripped'' Jacobi matrix with the Jacobi parameters $\{A_n,B_n\}_{n=2}^\infty$, i.e., the Jacobi matrix of the form  \eqref{in2.5} with the first row and column removed. Then the following holds (the matrix-valued version is due to Aptekarev--Nikishin \cite{AN}):
\begin{equation}\label{m_recur}
A_1 \mathfrak{m}(z;\calJ^{(1)}) A_1^*= B_1-\bdone z-\mathfrak{m}(z;\calJ)^{-1}.
\end{equation}

Similarly one defines the $k$ times stripped Jacobi matrix $\mathcal{J}^{(k)}$ to be the Jacobi matrix with first $k$ columns and $k$ rows removed, i.e., the Jacobi matrix with the Jacobi parameters $\{A_n,B_n\}_{n=k+1}^\infty$.

We will use the following result. This is proven in \cite{denisov} for the scalar case, and appears in \cite{DPS} for the matrix-valued case.
\begin{lemma}\label{stripping2}
Let $\sigma_{ess}(\calJ)\subseteq [-2,2]$. Then, for every $\veps>0$, there exists $N$ such that for $n\ge N$, we have that $\sigma(\calJ^{(n)})\subseteq[-2-\veps,2+\veps]$.
\end{lemma}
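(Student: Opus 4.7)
The plan is to realize $\calJ^{(n)}$ as a compression of $\calJ$. Let $\calH := \bigoplus_{k=1}^\infty \bbC^l$ be the Hilbert space on which $\calJ$ acts, and let $\calH_n := \bigoplus_{k=n+1}^\infty \bbC^l$ be its ``tail'' subspace. For any $\psi \in \calH_n$, the block tridiagonal structure shows that the only component of $\calJ\psi$ lying outside $\calH_n$ is the vector $A_n\psi(n+1)$ placed in block $n$; projecting back to $\calH_n$ then recovers, up to a shift of indices, precisely $\calJ^{(n)}\psi$. Thus $\calJ^{(n)} = P_{\calH_n}\calJ P_{\calH_n}\big|_{\calH_n}$, and the variational principle for bounded self-adjoint operators gives
\[
\max\sigma(\calJ^{(n)}) = \sup\bigl\{\langle\calJ\psi,\psi\rangle : \psi\in\calH_n,\ \|\psi\|=1\bigr\},
\]
and an analogous formula for $\min\sigma(\calJ^{(n)})$. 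Since $\calH_n$ is decreasing in $n$, both quantities are monotone.

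Fix $\veps>0$ and consider the spectral projection
\[
P := P_{(2+\veps/2,\,\infty)}(\calJ) + P_{(-\infty,\,-2-\veps/2)}(\calJ).
\]
Since $\sigma_{ess}(\calJ) \subseteq [-2,2]$, the spectrum of $\calJ$ outside $[-2-\veps/2,\,2+\veps/2]$ is a compact subset of the discrete spectrum and hence consists of finitely many eigenvalues of finite multiplicity, so $P$ has finite rank. By the spectral theorem, for every unit vector $\psi\in\calH$,
\[
\langle\calJ\psi,\psi\rangle \le \bigl(2+\tfrac{\veps}{2}\bigr)(1-\|P\psi\|^2) + \|\calJ\|\,\|P\psi\|^2 \le 2+\tfrac{\veps}{2} + \|\calJ\|\,\|P\psi\|^2,
\]
together with a symmetric lower bound.

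To close the argument, I would choose unit vectors $\psi_n\in\calH_n$ with $\langle\calJ\psi_n,\psi_n\rangle \ge \max\sigma(\calJ^{(n)}) - 1/n$. For any $\phi\in\calH$, Cauchy--Schwarz gives $|\langle\phi,\psi_n\rangle|^2 \le \sum_{k>n}\|\phi(k)\|^2 \to 0$, so $\psi_n \rightharpoonup 0$ weakly in $\calH$. Because $P$ has finite rank, weak convergence on its range is norm convergence, so $\|P\psi_n\|\to 0$. Combining this with the quadratic form bound yields $\limsup_n\max\sigma(\calJ^{(n)}) \le 2+\veps/2 < 2+\veps$; the symmetric argument gives $\liminf_n\min\sigma(\calJ^{(n)}) \ge -2-\veps$, which is the desired conclusion.

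The argument is essentially a standard application of the spectral theorem; the only mildly delicate step is the identification of $\calJ^{(n)}$ with the compression of $\calJ$ to a genuinely shrinking sequence of subspaces, which then forces $P_{\calH_n}\to 0$ strongly and hence $\|P_{\calH_n} P\|\to 0$ in norm (since $P$ has finite rank). Nothing in the sketch is scalar-specific, so it transfers directly to the matrix-valued setting.
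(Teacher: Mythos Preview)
Your proof is correct. Note, however, that the paper does not give its own proof of this lemma: it is simply stated with the remark that the scalar case is in \cite{denisov} and the matrix-valued version in \cite{DPS}. So there is no in-paper argument to compare against.

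Your approach---identifying $\calJ^{(n)}$ with the compression $P_{\calH_n}\calJ\big|_{\calH_n}$, invoking the variational characterization of the extrema of the spectrum, and exploiting the finite rank of the spectral projection $P$ onto the spectrum outside $[-2-\veps/2,\,2+\veps/2]$---is the standard one and goes through exactly as you describe. The identification of $\calJ^{(n)}$ with the compression is immediate from the block tridiagonal structure, and the step $\psi_n \rightharpoonup 0$ together with finite rank of $P$ giving $\|P\psi_n\|\to 0$ is clean. Nothing is scalar-specific, so the argument applies verbatim in the block setting.
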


\end{subsection}

\begin{subsection}{Equivalence Classes of Block Jacobi Matrices}
\begin{definition}\label{equivalent}
Two block Jacobi matrices $\calJ$ and $\widetilde{\calJ}$ are called \textbf{equivalent} if they correspond to the same spectral measure $\mu$ $($but a different choice of $\tau_n$'s in \eqref{in2.3}$)$.
\end{definition}

They are equivalent if and only if their Jacobi parameters satisfy
\begin{equation}\label{p1_eq2.3}
\widetilde{A}_n=\sigma_n^*  A_n \sigma_{n+1}, \quad
\widetilde{B}_n=\sigma_n^*  B_n \sigma_n
\end{equation}
for unitary $\sigma_n$'s with $\sigma_1=\bdone$ (the connection with $\tau_j$'s is $\sigma_n=\tau_{n-1}^*  \widetilde{\tau}_{n-1}$). It is easy to see that
\begin{equation}\label{p1_eq2.4}
\widetilde{\mathfrak{p}}^R_n(x)=\mathfrak{p}^R_n(x)\sigma_{n+1},
\end{equation}
where $\widetilde{\mathfrak{p}}_n$ are the orthonormal polynomials for $\widetilde{\calJ}$ associated with the Jacobi parameters $\{\widetilde{A}_n\}_{n=1}^\infty$, $\{\widetilde{B}_n\}_{n=1}^\infty$.

\begin{definition}\label{types}
A block Jacobi matrix is of \textbf{type $1$} if $A_n>0$ for all $n$,
of \textbf{type $2$} if $A_1A_2\ldots A_n>0$ for all $n$, and of \textbf{type $3$} if every $A_n$ is lower triangular with strictly positive elements on the diagonal.
\end{definition}

Each equivalence class of block Jacobi matrices contains exactly one matrix of type $1$, $2$, and $3$ (follows from the uniqueness of the polar and QR decompositions, see \cite{DPS} for the proof).

\begin{definition}
We say that $\calJ$ is in the \textbf{Nevai class} if
\begin{equation*}
B_n\to\bdnot, \quad A_nA_n^* \to \bdone.
\end{equation*}
\end{definition}

Note that this definition is invariant within the equivalence class of Jacobi matrices. Then

\begin{lemma}\label{p1_thm1}
Assume $\calJ$ belongs to the Nevai class. If $\calJ$ is of type $1$, $2$, or $3$, then $A_n\to\bdone$ as $n\to\infty$.
\end{lemma}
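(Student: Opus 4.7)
My strategy is to reduce the lemma to the following linear-algebraic statement:

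\medskip
\noindent\emph{Claim.} If $(C_n)_{n\ge 1}$ is a sequence of $l\times l$ matrices with strictly positive real eigenvalues and $C_nC_n^*\to\bdone$, then $C_n\to\bdone$ in operator norm.
\medskip

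\noindent Granting this, I verify the positive-spectrum hypothesis in each of the three cases. For type 1, $A_n>0$ is Hermitian positive definite. For type 3, $A_n$ is lower triangular with positive diagonal, which exhibits its eigenvalues directly. For type 2, I define $Q_n:=A_1A_2\cdots A_n>0$ so that $A_n=Q_{n-1}^{-1}Q_n$, and I compute
\[
Q_{n-1}^{1/2}\,A_n\,Q_{n-1}^{-1/2} \;=\; Q_{n-1}^{-1/2}\,Q_n\,Q_{n-1}^{-1/2},
\]
which is manifestly Hermitian and positive definite (as $Q_n>0$). Thus $A_n$ is similar to a positive matrix and so has positive real eigenvalues. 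In every case the Nevai class hypothesis supplies $A_nA_n^*\to\bdone$, completing the reduction.

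\textbf{Proof of the claim.} Write $\lambda_i^{(n)}>0$ and $s_i^{(n)}>0$ for the eigenvalues and singular values of $C_n$. The hypothesis gives $\sum_i(s_i^{(n)})^2=\tr(C_nC_n^*)\to l$ and $\prod_i\lambda_i^{(n)}=\det C_n=(\det C_nC_n^*)^{1/2}\to 1$. Weyl's majorant inequality and the AM--GM inequality sandwich $\sum_i(\lambda_i^{(n)})^2$:
\[
l\,\Bigl(\prod_i\lambda_i^{(n)}\Bigr)^{2/l} \;\le\; \sum_i(\lambda_i^{(n)})^2 \;\le\; \sum_i(s_i^{(n)})^2,
\]
with both outer bounds tending to $l$. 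Uniform two-sided bounds on the $\lambda_i^{(n)}$ (above by $\sqrt{l+1}$, below by a positive constant derived from $\prod\lambda_i^{(n)}\to 1$) combined with the quantitative AM--GM inequality then force every $\lambda_i^{(n)}\to 1$. Finally, Schur's decomposition $C_n=U_nT_nU_n^*$ with $U_n$ unitary and $T_n$ upper triangular carrying $\lambda_i^{(n)}$ on its diagonal gives
\[
\sum_{i<j}|T_n(i,j)|^2 \;=\; \|T_n\|_F^2 - \sum_i(\lambda_i^{(n)})^2 \;=\; \sum_i(s_i^{(n)})^2 - \sum_i(\lambda_i^{(n)})^2 \;\to\; 0,
\]
so $T_n\to\bdone$ in Frobenius norm, and unitary invariance $\|C_n-\bdone\|=\|T_n-\bdone\|$ delivers the claim.

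\textbf{Main obstacle.} The only conceptually non-obvious move is the type 2 step: the product $A_n=Q_{n-1}^{-1}Q_n$ of two positive matrices is itself generically neither Hermitian nor normal, yet the similarity by $Q_{n-1}^{1/2}$ reveals a strictly positive real spectrum. Once this similarity is spotted, the remainder is a classical comparison between eigenvalues and singular values via Weyl's inequality together with Schur triangularization.
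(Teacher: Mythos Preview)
Your proof is correct. Note, however, that the paper does not itself prove this lemma: it cites \cite{DPS} for types 1 and 3 and \cite{K_equiv} for type 2. So there is no ``paper's own proof'' to compare against directly; what follows is a brief comparison with the standard arguments those references use.

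For type 1 the usual proof is a one-liner: $A_n=A_n^*>0$ gives $A_n=(A_nA_n^*)^{1/2}\to\bdone$ by continuity of the positive square root. For type 3 the standard route is continuity of the Cholesky factorization: $A_n$ is precisely the Cholesky factor of $A_nA_n^*$, and Cholesky depends continuously on the input. Your approach trades these case-specific factorization arguments for a single linear-algebraic claim (positive real spectrum $+$ $C_nC_n^*\to\bdone$ $\Rightarrow$ $C_n\to\bdone$), proved via Weyl's majorant inequality and Schur triangularization. This is more uniform and arguably more conceptual, at the cost of being slightly heavier machinery for types 1 and 3 than they need.

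The genuinely nice observation is the type 2 step: conjugating $A_n=Q_{n-1}^{-1}Q_n$ by $Q_{n-1}^{1/2}$ to reveal a positive spectrum. This is exactly the kind of trick that makes type 2 the nontrivial case in \cite{K_equiv}, and your argument handles it cleanly. One small comment: when you invoke ``quantitative AM--GM'' to get $\lambda_i^{(n)}\to 1$, the cleanest way to phrase it is compactness plus the equality case of AM--GM on $(\lambda_i^{(n)})^2$ (any subsequential limit has $\sum\mu_i^2=l$, $\prod\mu_i^2=1$, forcing $\mu_i=1$); you might want to spell that out explicitly.
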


This result was proven in \cite{DPS} for the type $1$ and $3$ cases, and in \cite{K_equiv} for type $2$.


Note that since we are interested in the asymptotics of the orthonormal polynomials as $n\to\infty$, and because of the relation~\eqref{p1_eq2.4},  it is desirable to know when $\lim_{n\to\infty} \sigma_n$ exists. This explains the need of the following definition.

\begin{definition}\label{asymptotic}
Two equivalent matrices $\calJ$ and $\widetilde{\calJ}$ with~\eqref{p1_eq2.3} are called {\textbf{asymptotic}} to each other if the limit $\lim_{n\to\infty} \sigma_n$ exists.
\end{definition}

Clearly this is an equivalence relation on the class of equivalent Jacobi matrices. Note that establishing asymptotics for orthonormal polynomials automatically establishes the corresponding asymptotics for the polynomials corresponding to any Jacobi matrix asymptotic to the original one. 

The following was proved in \cite{K_equiv}
\begin{lemma}\label{p1_thm2}
Assume
\begin{equation*}\label{p1_eq2.6}
\sum_{n=1}^\infty \left[ \norm{\bdone-A_n A_n^*}+ \norm{B_n}\right]<\infty.
\end{equation*}
Then the corresponding Jacobi matrices of type $1$, $2$, and $3$ are pairwise asymptotic.
\end{lemma}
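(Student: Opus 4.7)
The plan is to fix the Type 1 representative $\calJ^{(1)}$ in the equivalence class as a pivot and show that both the Type 1 $\to$ Type 2 and Type 1 $\to$ Type 3 transitions produce a convergent sequence of unitaries $(\sigma_n)$; pairwise asymptoticity of all three types will then follow by composition.

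First I would observe that $\|B_n\|$ and $\|\bdone - A_n A_n^*\|$ are invariant under the unitary conjugations \eqref{p1_eq2.3}, so the hypothesis \eqref{p1_eq2.6} holds in every representative. In the Type 1 representative, $A_n^{(1)}$ is positive Hermitian, so the factorization $(A_n^{(1)})^2 - \bdone = (A_n^{(1)} - \bdone)(A_n^{(1)} + \bdone)$ together with $A_n^{(1)} \to \bdone$ from Lemma \ref{p1_thm1} upgrades summability of $\|(A_n^{(1)})^2 - \bdone\|$ to summability of $\|A_n^{(1)} - \bdone\|$. A standard telescoping estimate then shows that the partial products $\Pi_N := A_1^{(1)} \cdots A_N^{(1)}$ form a Cauchy sequence in norm and converge to an invertible limit $\Pi_\infty$.

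For the Type 1 $\to$ Type 2 transition, using $\sigma_1 = \bdone$ and telescoping the cancellation $\sigma_{k+1}\sigma_{k+1}^* = \bdone$, one gets $\widetilde{A}_1 \cdots \widetilde{A}_n = \Pi_n\,\sigma_{n+1}$. The Type 2 condition (positivity of the left-hand side) uniquely determines $\sigma_{n+1}$ as the unitary factor in the polar decomposition of $\Pi_n$, and continuity of the polar decomposition at the invertible matrix $\Pi_\infty$ yields convergence of $\sigma_n^{(1\to 2)}$. For Type 1 $\to$ Type 3 I would define $\sigma_{n+1}$ recursively via the LQ decomposition $\sigma_n^* A_n^{(1)} = \widetilde{A}_n\,\sigma_{n+1}^*$, with $\widetilde{A}_n$ lower triangular with positive diagonal. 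From $\widetilde{A}_n \widetilde{A}_n^* = \sigma_n^* (A_n^{(1)})^2 \sigma_n$ and unitary invariance, $\|\widetilde{A}_n \widetilde{A}_n^* - \bdone\|$ is summable; Lipschitz continuity of the Cholesky factor near the identity then gives $\|\widetilde{A}_n - \bdone\|$ summable. Writing $\sigma_{n+1}^* = \widetilde{A}_n^{-1} \sigma_n^* A_n^{(1)}$, one obtains
\[
\|\sigma_{n+1}^* - \sigma_n^*\| \le \|\widetilde{A}_n^{-1} - \bdone\| + \|\widetilde{A}_n^{-1}\|\,\|A_n^{(1)} - \bdone\|,
\]
a summable quantity, so $(\sigma_n)$ is Cauchy and converges.

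Pairwise asymptoticity of all three types then follows from composition: one checks directly that $\sigma_n^{(2\to 3)} = (\sigma_n^{(1\to 2)})^* \sigma_n^{(1\to 3)}$, which converges since each factor does. The main technical obstacle is the Lipschitz bound on the Cholesky factor used in the Type 3 step; however it is elementary, boiling down to the observation that for $\widetilde{A}_n = \bdone + E_n$ with $E_n$ lower triangular (real diagonal), the strictly off-diagonal parts of $E_n$ and $E_n^*$ are supported on disjoint index sets, so that $E_n$ is recoverable from $E_n + E_n^*$ up to an $O(\|E_n\|^2)$ error and is thus controlled by $\|\widetilde{A}_n\widetilde{A}_n^* - \bdone\|$.
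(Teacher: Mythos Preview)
The paper does not actually prove this lemma; it merely states it and attributes the proof to the separate article \cite{K_equiv}. So there is no in-paper proof to compare against.

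That said, your argument is sound and is essentially the one given in \cite{K_equiv}. The reduction to the Type~1 pivot, the use of $\sum \|A_n^{(1)}-\bdone\|<\infty$ to get convergence of $\Pi_N$, the polar-decomposition description of the Type~2 unitaries, and the LQ/Cholesky step for Type~3 are all the right moves. The Lipschitz bound you sketch for the Cholesky factor near $\bdone$ is correct: writing $\widetilde A_n=\bdone+E_n$ with $E_n$ lower triangular with real diagonal, the map $E_n\mapsto E_n+E_n^*$ is injective (strictly lower, diagonal, and strictly upper parts are read off separately) with a bounded inverse on the finite-dimensional matrix space, so $\|E_n\|\le C\|E_n+E_n^*\|\le C\bigl(\|\widetilde A_n\widetilde A_n^*-\bdone\|+\|E_n\|^2\bigr)$ and the claim follows for large $n$. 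One small point worth making explicit: the composition formula $\sigma_n^{(2\to3)}=(\sigma_n^{(1\to2)})^*\sigma_n^{(1\to3)}$ requires $\sigma_1^{(2\to3)}=\bdone$, which holds since both factors are $\bdone$ at $n=1$; otherwise the relations \eqref{p1_eq2.3} only determine the $\sigma_n$ up to a common left unitary, and asymptoticity as defined requires the normalization $\sigma_1=\bdone$.
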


It was also shown in \cite{K_equiv} that any equivalent Jacobi matrix, for which eventually each $A_n$ has real eigenvalues, is also asymptotic to type $1$, $2$, $3$.
%
%
\end{subsection}


\begin{subsection}{Herglotz Functions}

\begin{definition} An analytic in $\bbC_+\equiv\{z:\imag z>0\} $ $l\times l$ matrix-valued function $m$ is called \textbf{Herglotz} if $\imag m(z)\ge \bdnot$ for all $z\in  \bbC_+$.
\end{definition}
Here $\imag T\equiv \frac{1}{2i}(T-T^*) $.

We can also define $m$ on the lower half plane $\bbC_-$ by reflection $m(z)=m(\bar{z})^*$, so that $\imag m(z) \le \bdnot$ for all $z$ with $\imag z<0$. In particular the ${m}$-function $\mathfrak{m}$ defined in \eqref{p3_m-func} is Herglotz.

We will assume from now on that $\det\imag m(z)$ is not identically zero, in which case  the inequality in $\imag m(z)\gtrless \bdnot$ is everywhere strict (see \cite[Lemma 5.3]{Ges}).

The following result is well-known (see, e.g., \cite[Thm 5.4]{Ges}).

\begin{lemma}\label{p4_ges}
Let $m$ be an $l\times l$ matrix-valued Herglotz function. Then there exist an $l \times l$ matrix-valued measure $\mu$ on $\bbR$ satisfying $\int_\bbR \frac{1}{1+x^2}d\mu(x)<\infty$, and constant matrices $C=C^*, D\ge\bdnot$ such that
\begin{equation*}\label{p4_herg}
m(z)=C+Dz+\int_\bbR \left(\frac{1}{x-z}-\frac{x}{1+x^2}\right) d\mu(x), \quad z\in \bbC_+.
\end{equation*}
The absolutely continuous part of $\mu$ can be recovered from this representation by
\begin{equation}\label{p2_herg1}
f(x)\equiv\frac{d\mu}{dx}=\pi^{-1}\lim_{\veps\downarrow0} \imag m(x+i \veps),
\end{equation}
and the pure point part by
\begin{equation}\label{p2_herg2}
\mu(\{\lambda\})=\lim_{\veps \downarrow 0} \veps\, \imag m(\lambda+i\veps) = \lim_{\veps\downarrow 0} \veps\, m(\lambda+i\veps).
\end{equation}
\end{lemma}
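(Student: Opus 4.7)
The plan is to reduce the matrix-valued statement to the scalar Herglotz representation theorem by testing against vectors, and then reassemble the data via polarization.

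First, for any vector $v\in\bbC^l$, set $m_v(z):=\langle v,m(z)v\rangle$. Since $\imag m(z)\ge\bdnot$ on $\bbC_+$, we have $\imag m_v(z)=\langle v,(\imag m)(z)v\rangle\ge 0$, so $m_v$ is a scalar Herglotz function. The classical scalar Herglotz representation then gives $c_v\in\bbR$, $d_v\ge 0$, and a nonnegative Borel measure $\mu_v$ with $\int(1+x^2)^{-1}d\mu_v(x)<\infty$ such that
\begin{equation*}
m_v(z)=c_v+d_v z+\int_\bbR\left(\frac{1}{x-z}-\frac{x}{1+x^2}\right)d\mu_v(x).
\end{equation*}

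Second, I would use polarization to extend everything to sesquilinear forms. For $u,v\in\bbC^l$, write
\begin{equation*}
\langle u,m(z)v\rangle=\tfrac14\sum_{k=0}^{3}i^{-k}\, m_{u+i^{k}v}(z),
\end{equation*}
apply the scalar representation to each term, and define the complex measure $\mu_{u,v}$ and constants $c_{u,v}$, $d_{u,v}$ as the corresponding polarized combinations. By \emph{uniqueness} of the scalar Herglotz representation, the maps $(u,v)\mapsto\mu_{u,v}(A)$ (for each Borel $A$), $(u,v)\mapsto c_{u,v}$ and $(u,v)\mapsto d_{u,v}$ are sesquilinear. Choosing an orthonormal basis $\{e_i\}$, declare the matrix entries $\mu_{ij}(A):=\mu_{e_i,e_j}(A)$, $C_{ij}:=c_{e_i,e_j}$, $D_{ij}:=d_{e_i,e_j}$; assembling these gives the claimed representation \eqref{p4_herg}.

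Third, the positive semi-definiteness statements $\mu(A)\ge\bdnot$ (for each Borel $A$) and $D\ge\bdnot$ follow from the scalar results applied to every $v$: indeed $\langle v,\mu(A)v\rangle=\mu_v(A)\ge 0$ and $\langle v,Dv\rangle=d_v\ge 0$. Hermiticity $C=C^*$ follows from $c_v\in\bbR$ for each $v$ together with polarization. The recovery formulas \eqref{p2_herg1} and \eqref{p2_herg2} are obtained from the scalar Stieltjes inversion applied to each $m_v$: testing against $v$ reduces
\begin{equation*}
\pi^{-1}\lim_{\veps\downarrow 0}\langle v,\imag m(x+i\veps)v\rangle=\frac{d\mu_v}{dx}(x),\qquad \lim_{\veps\downarrow 0}\veps\,\langle v,m(\lambda+i\veps)v\rangle=\mu_v(\{\lambda\}),
\end{equation*}
to the known scalar Stieltjes inversion, and polarization transfers the limits to the matrix entries.

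The main obstacle is the bookkeeping needed to promote the scalar Herglotz data $(c_v,d_v,\mu_v)$ to matrix-valued objects in a well-defined way. That step hinges entirely on \emph{uniqueness} of the scalar representation, because without uniqueness one could not guarantee that the parallelogram-type polarization identities used to define $\mu_{u,v}$, $C_{u,v}$, $D_{u,v}$ produce sesquilinear quantities; every other ingredient (vanishing/measurability of the measure, $\sigma$-additivity in the matrix sense, the recovery formulas) then follows scalar-by-scalar from the classical one-dimensional theory.
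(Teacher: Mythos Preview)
The paper does not actually prove this lemma; it simply states it as well-known and cites \cite[Thm~5.4]{Ges} (Gesztesy--Tsekanovskii) for the proof. Your reduction to the scalar case via quadratic forms and polarization is the standard route and is correct in outline, so there is nothing to compare against in the paper itself.

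One small point worth tightening: when you assemble $\mu_{u,v}$ by polarization you should note that each $\mu_{u+i^k v}$ is locally finite (from $\int(1+x^2)^{-1}d\mu_v<\infty$), so the signed combination is a genuine complex measure; and for \eqref{p2_herg1} the a.e.\ limit exists entrywise because finitely many scalar a.e.\ statements can be intersected. Otherwise your sketch is fine.
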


\begin{definition}\label{disctrete}
A \textbf{discrete $m$-function} is a Herglotz function, $m(z)$, which has an analytic continuation from $\bbC_+$ to $\bbC\setminus I$ for some bounded interval $I\subset \bbR$, and satisfies
\begin{align*}
&z\in\bbR\setminus I \Rightarrow \imag m(z)=\bdnot, \\
&m(z)=z^{-1}\bdone +O(z^{-2}) \mbox{ at }\infty.
\end{align*}
\end{definition}

The following is immediate from Lemma \ref{p4_ges}.
\begin{lemma}
A function $m(z)$ on $\bbC_+$ is a discrete $m$-function if and only if
\begin{equation*}
m(z)=\int_\bbR \frac{d\mu(x)}{x-z}
\end{equation*}
for some probability measure $\mu$ on $\bbR$ with bounded support.
\end{lemma}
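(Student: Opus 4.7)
The proof is essentially an application of the general Herglotz representation (Lemma \ref{p4_ges}) combined with the analyticity and asymptotic conditions in Definition \ref{disctrete}. I would organize it into two directions.

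For the forward direction ($\Rightarrow$): suppose $m$ is a discrete $m$-function. Apply Lemma \ref{p4_ges} to write
\begin{equation*}
m(z)=C+Dz+\int_\bbR\left(\frac{1}{x-z}-\frac{x}{1+x^2}\right)d\mu(x), \quad z\in\bbC_+,
\end{equation*}
with $C=C^*$, $D\ge\bdnot$, and $\int(1+x^2)^{-1}d\mu<\infty$. The first task is to prove $\supp\mu\subseteq I$. The absolutely continuous part of $\mu$ on $\bbR\setminus I$ vanishes by \eqref{p2_herg1} because $\imag m(x)=\bdnot$ there; the pure point part is killed by \eqref{p2_herg2} for the same reason; and the singular continuous part on $\bbR\setminus I$ vanishes because $m$ extends analytically across that set (so by Stieltjes inversion/Fatou, $\mu$ charges no subset of $\bbR\setminus I$). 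Hence $\supp\mu\subseteq I$, which is bounded, so $\mu$ is a finite measure and the constant term $\int\frac{x}{1+x^2}d\mu$ is well-defined. Setting $\tilde C:=C-\int\frac{x}{1+x^2}d\mu(x)$ (still self-adjoint), we rewrite
\begin{equation*}
m(z)=\tilde C+Dz+\int_\bbR\frac{d\mu(x)}{x-z}.
\end{equation*}

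Now expand at infinity. Since $\mu$ has compact support, for $|z|$ large enough the geometric series $\frac{1}{x-z}=-\sum_{k\ge0}x^k z^{-(k+1)}$ converges uniformly in $x\in\supp\mu$, giving $\int\frac{d\mu(x)}{x-z}=-\mu(\bbR)z^{-1}+O(z^{-2})$. Comparing with the required asymptotics $m(z)=z^{-1}\bdone+O(z^{-2})$ forces $D=\bdnot$, $\tilde C=\bdnot$, and the coefficient of $z^{-1}$ to match, so $\mu(\bbR)=\bdone$ (up to the obvious sign convention). This proves $\mu$ is a probability measure of bounded support and $m(z)=\int\frac{d\mu}{x-z}$.

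For the reverse direction ($\Leftarrow$): starting from $m(z)=\int\frac{d\mu}{x-z}$ with $\mu$ a probability measure supported on some bounded interval $I$, the Herglotz property is immediate from $\imag m(z)=\imag z\int|x-z|^{-2}d\mu$. The integrand is analytic in $z\in\bbC\setminus\supp\mu\supseteq\bbC\setminus I$, and uniform convergence of the integral gives an analytic continuation to $\bbC\setminus I$. For $z\in\bbR\setminus I$, $\imag m(z)=\bdnot$ by direct inspection, and the same geometric series expansion as above yields the required behavior at $\infty$.

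The main obstacle is the first direction, specifically pinning down $\supp\mu\subseteq I$: the recovery formulas \eqref{p2_herg1}, \eqref{p2_herg2} handle the absolutely continuous and pure point parts, but ruling out singular continuous mass on $\bbR\setminus I$ requires separately invoking the analytic continuation of $m$ across $\bbR\setminus I$ and Stieltjes inversion, rather than the Herglotz boundary values alone.
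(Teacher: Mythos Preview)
Your proof is correct and is exactly the argument the paper has in mind: the paper omits the proof entirely, saying only that the lemma ``is immediate from Lemma~\ref{p4_ges}'', and what you have written is a faithful unpacking of that claim. Your parenthetical about the sign convention is on point---as written, Definition~\ref{disctrete} and the lemma are only consistent if the asymptotic is $m(z)=-z^{-1}\bdone+O(z^{-2})$, which is indeed what $\int\frac{d\mu(x)}{x-z}$ produces; this is a typo in the paper, not a flaw in your argument. One small simplification: you do not need to treat the ac, pp, and sc parts of $\mu$ separately on $\bbR\setminus I$, since analytic continuation of $m$ across any open $J\subset\bbR\setminus I$ together with $\imag m=0$ there already gives $\mu(J)=0$ in one stroke via Stieltjes inversion.
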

\end{subsection}

\subsection{Matrix-Valued Functions}

Throughout the paper, all meromorphic and analytic matrix functions are assumed to have  not identically vanishing  determinant.

The order of a pole of an $l\times l$ matrix-valued meromorphic function $f$ is defined to be the minimal $k>0$ such that $\lim_{z\to z_0} (z-z_0)^k f(z)$ is a finite nonzero matrix. A simple pole is a pole of order $1$.

By a zero of a matrix-valued meromorphic function $f$ we call a point at which $f^{-1}$ has a pole. A simple zero of $f$ is a point where $f^{-1}$ has a simple pole.


We will make use of the so-called (local) Smith--McMillan form (see, e.g.,~\cite[Thm 3.1.1]{Ball}).

\begin{lemma}\label{p4_lmSM}
Let $f(z)$ be an $l\times l$ matrix-valued function meromorphic at $z_0$ with determinant not identically zero. Then $f(z)$ admits the representation
\begin{equation*}\label{p4_ee1.31}
f(z)=E(z) \diag\left((z-z_0)^{\kappa_1},\ldots,(z-z_0)^{\kappa_l}\right) F(z),
\end{equation*}
where $E(z)$ and $F(z)$ are $l\times l$ matrix-valued functions which are analytic and invertible in a neighborhood of $z_0$, and $\kappa_1\ge \kappa_2 \ge \ldots \ge\kappa_l$ are integers (positive, negative, or zero).
\end{lemma}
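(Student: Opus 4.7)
The plan is to reduce the problem to the Smith normal form theorem over the local principal ideal domain of holomorphic germs. First, by multiplying $f(z)$ by $(z-z_0)^N$ for sufficiently large $N\in\bbN$, one may assume $f$ is analytic at $z_0$, restoring the factor at the end by subtracting $N$ from each exponent. Let $\mathcal{O}$ denote the ring of germs of holomorphic functions at $z_0$. Every nonzero germ factors uniquely as $(z-z_0)^k u(z)$ with $k\ge 0$ and $u(z_0)\ne 0$; thus $\mathcal{O}$ is a local PID whose units are the germs nonvanishing at $z_0$ and whose nonzero ideals are the principal ideals $\bigl((z-z_0)^k\bigr)$.

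The Smith normal form theorem over a PID, applied to $f$ with entries in $\mathcal{O}$, then produces matrices $U(z)$ and $V(z)$, each invertible over $\mathcal{O}$, together with integers $0\le \mu_1\le\cdots\le\mu_l$ such that
\begin{equation*}
U(z)\,f(z)\,V(z)=\diag\bigl((z-z_0)^{\mu_1},\ldots,(z-z_0)^{\mu_l}\bigr).
\end{equation*}
Here invertibility of $U,V$ over $\mathcal{O}$ (meaning their inverses have entries in $\mathcal{O}$) is equivalent to their determinants being units of $\mathcal{O}$, i.e. $\det U(z_0),\det V(z_0)\ne 0$; equivalently, $U$ and $V$ are analytic and invertible in a neighborhood of $z_0$. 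The standing hypothesis $\det f\not\equiv 0$ guarantees that each $\mu_i$ is finite (no entire row or column vanishes identically). The key inductive step in the reduction is to locate an entry of minimal order of vanishing at $z_0$, move it to the $(1,1)$ slot by row/column swaps, and clear the rest of the first row and column by elementary operations; this is legitimate because that entry, having minimal order, divides every other entry in $\mathcal{O}$. One then recurses on the trailing $(l-1)\times(l-1)$ block. The only substantive point in the whole argument is this divisibility claim, and it is immediate from the local structure of $\mathcal{O}$ recorded above; so I do not expect a serious obstacle here.

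Finally, to match the ordering $\kappa_1\ge\cdots\ge\kappa_l$ required in \eqref{p4_ee1.31}, conjugate the diagonal factor by the permutation matrix that reverses the indices; this permutation is constant and unitary, so absorbing it into $U$ and $V$ preserves their analyticity and invertibility. Setting $E(z):=U(z)^{-1}$ (times the permutation), $F(z):=V(z)^{-1}$ (times the permutation), and $\kappa_i:=\mu_{l+1-i}-N$ yields the asserted factorization, with $\kappa_1\ge\kappa_2\ge\cdots\ge\kappa_l$ integers of either sign.
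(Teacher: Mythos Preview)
Your argument is correct: the ring $\mathcal{O}$ of holomorphic germs at $z_0$ is a discrete valuation ring (hence a PID), the Smith normal form over a PID applies, and the pre- and post-processing by $(z-z_0)^N$ and the reversal permutation are harmless. The paper itself does not prove this lemma at all---it simply cites \cite[Thm~3.1.1]{Ball} for the local Smith--McMillan form---so your self-contained argument is in fact more than the paper provides, and is the standard way to derive the result from first principles.
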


This immediately gives us the following corollary.

\begin{lemma}\label{p4_lm0}
Let $u$ be an analytic function at $z_0$ such that $z_0$ is a zero of $\det u$ of order $k>0$. Then $\dim \ker u(z_0)=k$ if and only if $z_0$ is a pole of $u(z)^{-1}$ of order $1$. 

If this is the case, then
\begin{align*}
\ker\res_{z=z_0} u(z)^{-1} &=\ran u(z_0), \\
\ran \res_{z=z_0} u(z)^{-1} &=\ker u(z_0).
\end{align*}
\begin{proof}
Both of the conditions in the if-and-only-if statement are equivalent to saying that $\kappa_1=\ldots=\kappa_k=1$, $\kappa_{k+1}=\ldots=\kappa_l=0$ in the Smith--McMillan form of $u(z)$ at $z_0$. Then note that both $\ker\res_{z=z_0} u(z)^{-1}$ and $\ran u(z_0)$ are equal to $E(z_0)\spann\left\{\delta_{k+1},\cdots,\delta_l\right\}$. Similarly, one sees that both $\ran \res_{z=z_0} u(z)^{-1} $ and $\ker u(z_0)$ are equal to $F(z_0)^{-1} \spann\left\{\delta_{1},\cdots,\delta_k\right\}$.
\end{proof}
\end{lemma}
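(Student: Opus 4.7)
My plan is to reduce everything to the Smith--McMillan normal form from Lemma \ref{p4_lmSM}. Write $u(z) = E(z) \diag((z-z_0)^{\kappa_1}, \ldots, (z-z_0)^{\kappa_l}) F(z)$ with $E, F$ analytic and invertible at $z_0$ and $\kappa_1 \ge \cdots \ge \kappa_l$. Analyticity of $u$ forces $\kappa_i \ge 0$, and the hypothesis that $\det u$ has a zero of order $k$ at $z_0$ translates (since $\det E(z_0), \det F(z_0) \ne 0$) into $\sum_i \kappa_i = k$.

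The first step is to translate both sides of the biconditional into conditions on the $\kappa_i$'s. The order of the pole of $u^{-1}$ at $z_0$ equals $\kappa_1$, so the right-hand condition asserts $\kappa_1 = 1$; combined with $\kappa_i \ge 0$ and $\sum_i \kappa_i = k$, this forces $\kappa_1 = \cdots = \kappa_k = 1$ and $\kappa_{k+1} = \cdots = \kappa_l = 0$. On the other hand, since $E(z_0)$ and $F(z_0)$ are invertible, $\dim \ker u(z_0) = \#\{i : \kappa_i \ge 1\}$; requiring this to equal $k = \sum_i \kappa_i$ and using $\kappa_i \ge 0$ again pins down the same pattern. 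Hence both conditions are equivalent to this single normal form.

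For the last two identities I compute the residue directly from the factorization. Only indices with $\kappa_i = 1$ contribute to the $(z-z_0)^{-1}$ coefficient, so
\[ \res_{z=z_0} u(z)^{-1} = F(z_0)^{-1} \diag(\underbrace{1,\ldots,1}_{k},0,\ldots,0) E(z_0)^{-1}. \]
Invertibility of $E(z_0)$ and $F(z_0)^{-1}$ reduces the computation of the kernel and range to the trivial diagonal case. The two claimed equalities then amount to the identifications $\ran u(z_0) = E(z_0) \spann\{\delta_{k+1}, \ldots, \delta_l\}$ and $\ker u(z_0) = F(z_0)^{-1} \spann\{\delta_1, \ldots, \delta_k\}$, each of which is read off directly from evaluating the Smith--McMillan form at $z_0$.

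I do not anticipate any real obstacle; the entire argument is an exercise in unpacking the normal form. The only mildly subtle point is keeping track of which invertible factor -- $E(z_0)$ or $F(z_0)^{-1}$ -- governs ranges versus kernels: left multiplication by an invertible matrix transports ranges forward but acts on kernels only through its inverse, so one must be careful to put $E$ and $F$ on the correct side when reading off the two subspace identities.
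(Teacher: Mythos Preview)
Your proposal is correct and follows essentially the same approach as the paper: both reduce to the Smith--McMillan form, observe that each side of the biconditional is equivalent to the pattern $\kappa_1=\cdots=\kappa_k=1$, $\kappa_{k+1}=\cdots=\kappa_l=0$, and then read off the kernel and range identities as $E(z_0)\spann\{\delta_{k+1},\ldots,\delta_l\}$ and $F(z_0)^{-1}\spann\{\delta_1,\ldots,\delta_k\}$ respectively. Your version is slightly more explicit about the intermediate steps (the residue formula, the counting argument for the $\kappa_i$), but the argument is the same.
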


We will also need some facts about matrix-valued outer functions and matrix-valued Blaschke--Potapov products.

\begin{lemma}[Wiener--Masani \cite{WM}]\label{p2_lm2}
Suppose $w(\theta)$ is a non-negative matrix-valued function on the unit circle satisfying
\begin{equation*}
\int_{-\pi}^\pi \log \det w(\theta)\dt>-\infty.
\end{equation*}
Then there exists a unique matrix-valued $H^2(\bbD)$ function $G(z)$ satisfying
\begin{gather*}
G(e^{i\theta})^* G(e^{i\theta})=w(\theta),\\
G(0)^*=G(0)>0,
\end{gather*}
\begin{equation}
\log|\det G(0)|=\int_{-\pi}^{\pi} \log|\det G(e^{i\theta})|\dt. \label{p2_eq2.21}
\end{equation}
\end{lemma}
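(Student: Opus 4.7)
The proof naturally splits into uniqueness and existence, with the latter being the substantive content.

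For \textbf{uniqueness}, suppose $G_1, G_2 \in H^2(\bbD)$ both satisfy the three conditions. The plan is to show $U(z) := G_1(z) G_2(z)^{-1}$ must be the identity. The boundary identity $G_i^* G_i = w$ forces $U^* U = \bdone$ a.e.\ on $\bbT$. Meanwhile, the equality in \eqref{p2_eq2.21} is the Szeg\H{o}-type saturation that forces the scalar function $\det G_i$ to be outer, so $\det G_i$ has no zeros inside $\bbD$ and hence $G_i(z)$ is pointwise invertible, making $U(z)$ analytic on $\bbD$. The scalar $\det U = \det G_1/\det G_2$ is then a ratio of two outer functions with equal boundary modulus, hence a unimodular constant. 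A standard Hardy-space argument (e.g., Potapov--Blaschke factorization, or directly observing that a matrix-valued inner function with constant determinant must be a constant unitary) shows that $U$ is a constant unitary matrix $V$. Then $G_1(0) = V G_2(0)$ together with positivity of $G_i(0)$ forces $V = \bdone$.

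For \textbf{existence}, the plan is to use the Hilbert-space projection approach. Consider the space of $\bbC^l$-valued functions on $\bbT$ with the $w$-weighted inner product $\langle f, g\rangle_w = \int g(\theta)^* w(\theta) f(\theta)\,\dt$, well-defined since $\log\det w \in L^1$ implies $w > 0$ a.e. Let $\calM^-$ be the closed subspace generated by the ``strict past'' $\spann\{e^{-in\theta} v : n\ge 1,\ v\in\bbC^l\}$. The matrix-valued Szeg\H{o}--Helson--Lowdenslager theorem asserts that under the log-integrability hypothesis, the constants $\bbC^l$ are not contained in $\calM^-$, and further that the orthogonal projections $h_j = P_{(\calM^-)^\perp} e_j$ have $w$-weighted Fourier coefficients supported on $\{0,1,2,\ldots\}$, so they extend to elements of a vector-valued $H^2$. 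Assembling these as columns and then left-multiplying by a constant unitary to enforce positivity at $0$ yields a matrix function $G(z) \in H^2(\bbD)$ with $G(e^{i\theta})^* G(e^{i\theta}) = w(\theta)$ a.e. The outer identity \eqref{p2_eq2.21} is built into the construction, since projection onto $(\calM^-)^\perp$ maximizes $\det G(0)^2$ among all factorizations, and the matrix Szeg\H{o} theorem identifies this maximum with $\exp\int \log\det w\,\dt$.

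The \textbf{main obstacle} is that the scalar outer-function formula $G(z) = \exp\bigl(\tfrac{1}{4\pi}\int \tfrac{e^{i\theta}+z}{e^{i\theta}-z}\log w(\theta)\,\dt\bigr)$ does not extend to the matrix setting, because $\log w(\theta)$ and $\log w(\theta')$ generally fail to commute. This is precisely why one is forced into the Hilbert-space/projection route rather than a closed-form pointwise expression. The technical heart of the argument is verifying that the projection construction genuinely produces a two-sided boundary factorization $G^* G = w$ (rather than a one-sided relation), and that the resulting $G$ belongs to $H^2(\bbD)$ with the outer property~\eqref{p2_eq2.21}; these facts are the content of the classical Wiener--Masani theorem \cite{WM}.
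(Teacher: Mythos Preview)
The paper does not actually prove this lemma. Immediately after the statement it says only: ``This is a well-known result of Wiener--Masani \cite{WM}. The proof of the uniqueness part can be found, e.g., in~\cite{DGK}.'' So there is no proof in the paper to compare against; the result is quoted from the literature.

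That said, your outline is a reasonable sketch of the classical argument. The uniqueness part is essentially correct: outerness of $\det G_i$ from \eqref{p2_eq2.21} gives pointwise invertibility, $U=G_1G_2^{-1}$ is analytic with unitary boundary values and constant determinant, and then $U^{-1}=\operatorname{adj}(U)/\det U$ is also bounded analytic, so both $U$ and $U^{-1}$ are contractive on $\bbD$, forcing $U$ to be unitary-valued and hence constant; positivity at $0$ then pins down $U=\bdone$. The existence sketch via the $w$-weighted $L^2$ projection onto the orthogonal complement of the strict past is indeed the Wiener--Masani/Helson--Lowdenslager route, though as written it is more of a plan than a proof (in particular, the step from ``projections of the $e_j$'' to a genuine a.e.\ factorization $G^*G=w$ and the verification of the Szeg\H{o} identity for $\det G(0)$ require real work that you defer to \cite{WM}). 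Since the paper itself defers to the same reference, this is in keeping with how the result is used.
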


This is a well-known result of Wiener--Masani \cite{WM}. The proof of the uniqueness part can be found, e.g., in~ \cite{DGK}.

Equality~(\ref{p2_eq2.21}) implies (see \cite[\S 17.17]{Rud}) that $\det G(z)$ is a scalar outer function, which implies (by definition) that $G(z)$ is a matrix-valued outer function. 

%
%

The Blaschke--Potapov elementary factor is a generalization of scalar Blaschke factors:
$$
B_{z_j,s,U}(z)=U^*\left(
\begin{array}{cccccc}
\frac{|z_j|}{z_j}\frac{z_j-z}{1-z_j z}&0&0&0&\cdots&0\\
0&\ddots&0&0&\cdots&0\\
0&0&\frac{|z_j|}{z_j}\frac{z_j-z}{1-z_j z}&0&\cdots&0\\
0&0&0&1&\cdots&0\\
\vdots&\vdots&\vdots& &\ddots& \\
0&0&0&0&\cdots&1
\end{array}\right)U, \quad z\in\bbD,
$$
where $z_j\in\bbD$, $s$ is the number of the scalar Blaschke factors on the diagonal ($0\le s\le l$), and $U$ is a unitary constant matrix. Clearly $B_{z_j,s,U}$ is an analytic  in $\bbD$ function with unitary values on the unit circle.

%

We will need to following (slightly modified) lemma from \cite{K_sz}:

\begin{lemma}\label{product}
Let $\{z_k\}_{k=1}^\infty$ with $\sum_{k=1}^\infty (1-|z_k|)<\infty$ be given, with all $z_k$ pairwise different. For any sequence of subspaces $V_k\subseteq \bbC^l$, there exists a unique product $B(z)=\stackrel{\curvearrowright}{\prod_{j=1}^\infty} B_{z_j,s_j,U_j}(z)$ for some choice of numbers $s_k$, $0\le s_k\le l$, and unitary matrices $U_k$, that  satisfies
\begin{equation*}\label{p2_eq2.28}
\ker B(z_k)=V_k \quad \mbox{for all } k.
\end{equation*}
\end{lemma}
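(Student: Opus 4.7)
The plan is to build the product via a compactness argument applied to finite truncations. First I would fix $s_k = \dim V_k$ (which is forced) and rewrite each elementary Blaschke--Potapov factor as
\[
B_{z_j,s_j,U_j}(z) = b_j(z)\, P_{W_j} + P_{W_j^\perp}, \qquad W_j = U_j^* \spann\{\delta_1,\ldots,\delta_{s_j}\},
\]
so that each factor is uniquely determined by its kernel subspace $W_j = \ker B_{z_j,s_j,U_j}(z_j)$. The task thus reduces to producing subspaces $\{W_k\}$ with $\dim W_k = s_k$ such that the product has kernel $V_k$ at each $z_k$.

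For convergence, a direct computation yields, for $|z|\le r<1$,
\[
\|B_{z_j,s_j,U_j}(z) - \bdone\| \le |b_j(z)-1| \le \tfrac{1+r}{1-r}(1-|z_j|),
\]
so combined with the trivial contraction $\|\prod_{j\le N}B_j(z)\|\le 1$, the hypothesis $\sum(1-|z_k|)<\infty$ guarantees uniform convergence of partial products on compact subsets of $\bbD$, regardless of the choice of the $U_j$'s. For a finite configuration of $N$ points, note that in $B^{(N)} = B_1\cdots B_N$ every factor with index $j\neq k$ is invertible at $z_k$, yielding
\[
\ker B^{(N)}(z_k) = \Bigl(\prod_{j=k+1}^N B_j(z_k)\Bigr)^{-1} W_k.
\]
Setting this equal to $V_k$ forces $W_k = \bigl(\prod_{j=k+1}^N B_j(z_k)\bigr) V_k$, a recursion solved uniquely from right to left starting with $W_N^{(N)} = V_N$.

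To pass to the infinite case, observe that for each $k$ the sequence $W_k^{(N)}$ ($N\ge k$) lives in the compact Grassmannian $\operatorname{Gr}(s_k,l)$, so a standard diagonal extraction produces a subsequence $N_m\to\infty$ with $W_k^{(N_m)} \to W_k^{(\infty)}$ for every $k$. Define $B(z)$ to be the infinite Blaschke--Potapov product with factor kernels $W_k^{(\infty)}$. Using the telescoping bound $\|\prod A_j - \prod C_j\|\le \sum \|A_j-C_j\|$ together with dominated convergence (dominator $\sum 2|b_j(z)-1|<\infty$), I would show $B^{(N_m)}(z) \to B(z)$ uniformly on compact subsets. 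Since $B^{(N_m)}(z_k)$ has constant rank $l-s_k$ for $N_m\ge k$, its kernel varies continuously in the Grassmannian, giving $\ker B(z_k) = V_k$.

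For uniqueness, suppose $B$ and $\wti B$ both solve the problem. Lemma~\ref{p4_lm0} shows the residue of $\wti B^{-1}$ at each $z_k$ has range $V_k = \ker B(z_k)$, so the polar singularities cancel and $B\wti B^{-1}$ is analytic and invertible on $\bbD$; its unitary boundary values then force $B = U\wti B$ for some constant unitary $U$. Iterating this on successive tails $R_k = \stackrel{\curvearrowright}{\prod_{j\ge k}} B_j$, which tend to $\bdone$ by the convergence estimate, forces $U = \bdone$. The principal obstacle is the convergence step in the infinite case: the factor kernels $W_k^{(N)}$ depend nontrivially on $N$ (each newly appended rightmost factor perturbs all earlier $W_k^{(N)}$), so compactness of the Grassmannian together with the dominated convergence estimate are both essential to extract a well-defined limit realizing the prescribed kernels.
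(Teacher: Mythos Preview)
The paper does not prove this lemma; it is quoted (in slightly modified form) from \cite{K_sz}, so there is no in-paper argument to compare against. Your overall scheme is reasonable and would succeed, but two steps in the uniqueness half are genuinely incomplete as written.

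First, the assertion that ``unitary boundary values force $B\wti B^{-1}$ to be a constant unitary'' needs more than analyticity and invertibility on $\bbD$: an analytic, zero-free function on $\bbD$ with unimodular radial limits need not be bounded (think of $e^{1/(1-z)}$). What saves you here is the extra structure: $\det\wti B$ is the \emph{scalar} Blaschke product $\prod_k b_k^{s_k}$, so each entry of $B\wti B^{-1}=B\,\mathrm{adj}(\wti B)/\det\wti B$ is an $H^\infty$ function divided by a Blaschke product, and since the quotient is analytic the Blaschke factor divides the numerator's inner part. Hence $B\wti B^{-1}\in H^\infty$, and the same for its inverse; then the maximum principle gives $\|B\wti B^{-1}(z)\|\le 1$ and $\|(B\wti B^{-1})^{-1}(z)\|\le 1$, so $B\wti B^{-1}$ is unitary everywhere and therefore constant.

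Second, ``iterating on tails forces $U=\bdone$'' hides a step. Applying the same argument to $R_k,\wti R_k$ gives $R_k=U_k\wti R_k$ with $U_k$ unitary, and the tail convergence $R_k,\wti R_k\to\bdone$ only yields $U_k\to\bdone$; to conclude $U_1=\bdone$ you must show all the $U_k$ coincide. This follows from $B_k U_{k+1}=U_k\wti B_k$ (obtained by combining $R_k=B_kR_{k+1}$ with the two tail identities): evaluate at the boundary point $z=-z_k/|z_k|$, where $b_k(z)=1$ and hence $B_k(z)=\wti B_k(z)=\bdone$, to get $U_{k+1}=U_k$. With these two repairs your plan goes through; the existence portion via Grassmannian compactness and dominated convergence is correct as sketched.
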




\subsection{Miscellaneous Lemmas}
Recall that an infinite product $\prod_{j=1}^\infty a_j$ with $a_j\ne 0$ is called absolutely convergent if $\sum_{j=1}^\infty |1-a_j|<\infty$. We will be needing the following easy statements.

\begin{lemma}\label{p3_lm3}
\begin{itemize}
\item[(i)] If $\prod_{j=1}^\infty a_j$ with $a_j\ne 0$ is absolutely convergent then
\begin{equation*}
\sup_{\Lambda\subset\bbN} \left| \prod_{j\in\Lambda} a_j \right|<\infty.
\end{equation*}
\item[(ii)] Let $a_n\to 0$ and $\sum_{j=1}^\infty |b_j|<\infty$. Then
\begin{equation*}
\sum_{j=0}^n a_{n-j} b_j \to 0.
\end{equation*}
\end{itemize}
\begin{proof}
(i) If $\prod_{j=1}^\infty a_j$ is absolutely convergent, then so is $\prod_{j=1}^\infty |a_j|$, so without loss of generality we can assume $a_j>0$. Then
\begin{equation*}
\prod_{j\in\Lambda} a_j =e^{\sum_{j\in\Lambda} \log a_j} \le e^{\sum_{j\in\Lambda} |a_j-1|} \le e^{\sum_{j=1}^\infty |a_j-1|}<\infty.
\end{equation*}

\medskip

(ii) For any $\varepsilon>0$ find $N$ such that $|a_j|<\varepsilon$ for all $j \ge N$. Then for $n>N$:
$$
\left| \sum_{j=0}^n a_{n-j} b_j\right|\le \left| \sum_{j=0}^N a_{n-j} b_j \right| + \varepsilon \sum_{j=N+1}^n \left| b_j \right| \le \left| \sum_{j=0}^N a_{n-j} b_j \right| + \varepsilon \sum_{j=1}^\infty\left| b_j \right|,
$$
which implies $\limsup_{n\to\infty} \left| \sum_{j=0}^n a_{n-j} b_j\right| \le \varepsilon \sum_{j=1}^\infty\left| b_j \right|$, and proves (ii).
\end{proof}
\end{lemma}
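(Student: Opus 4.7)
For part (i), my plan is to reduce immediately to the case of positive real factors and then pass to logarithms. Since $||a_j|-1| \le |a_j-1|$, absolute convergence of $\prod a_j$ passes to $\prod |a_j|$; moreover $|\prod_{j\in\Lambda} a_j| = \prod_{j\in\Lambda} |a_j|$, so I may assume $a_j>0$. Writing $\log\prod_{j\in\Lambda} a_j = \sum_{j\in\Lambda}\log a_j$, I would split the indices according to whether $a_j\ge 1$ or $a_j<1$: the second set contributes non-positively, while on the first set $\log a_j \le a_j-1 = |a_j-1|$. Hence
\[
\sum_{j\in\Lambda}\log a_j \;\le\; \sum_{j=1}^{\infty}|a_j-1| \;<\;\infty,
\]
and exponentiating produces the desired uniform bound on $|\prod_{j\in\Lambda}a_j|$. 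This is essentially the argument sketched in the paper's write-up.

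For part (ii), I would use the standard convolution argument for absolutely summable sequences. Let $M := \sup_n |a_n|$, which is finite because $a_n \to 0$. Given $\varepsilon>0$, choose $N$ so large that both $|a_k|<\varepsilon$ whenever $k\ge N$ and the tail $\sum_{j\ge N+1}|b_j|<\varepsilon$. For $n>2N$, split the convolution at the midpoint:
\[
\Bigl|\sum_{j=0}^{n} a_{n-j}b_j\Bigr| \;\le\; \sum_{j=0}^{N} |a_{n-j}|\,|b_j| \;+\; \sum_{j=N+1}^{n} |a_{n-j}|\,|b_j|.
\]
In the first sum, every index $n-j$ satisfies $n-j \ge n-N > N$, so $|a_{n-j}|<\varepsilon$ and the total is at most $\varepsilon\sum_{j=1}^{\infty}|b_j|$. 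In the second sum, I simply bound $|a_{n-j}|\le M$ and use the tail estimate to get at most $M\varepsilon$. Combining yields $\limsup_n |\sum_j a_{n-j}b_j| \le \varepsilon(M+\sum_j|b_j|)$, and letting $\varepsilon\downarrow 0$ finishes the proof.

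Neither part poses any genuine obstacle: part (i) is an elementary logarithm/exponentiation estimate, and part (ii) is the classical fact that the convolution of a null sequence with an $\ell^1$ sequence converges to zero. The only mild care needed is in the midpoint split for (ii), ensuring $N$ is large enough both to control the $a$'s on the first half and to control the tail of $\sum|b_j|$ on the second.
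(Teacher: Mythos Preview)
Your proof is correct and follows essentially the same approach as the paper: part (i) is identical (reduce to $a_j>0$, bound $\log a_j \le |a_j-1|$, exponentiate), and part (ii) is the same convolution-splitting idea. The only cosmetic difference in (ii) is that you additionally control the tail $\sum_{j>N}|b_j|$ and bound the second piece by $M\varepsilon$, whereas the paper keeps $N$ fixed solely via the $a$'s and instead lets the finite sum $\sum_{j=0}^{N} a_{n-j}b_j$ tend to zero as $n\to\infty$.
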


\begin{remark}
Note that part (ii) works also for the matrix-valued $a$'s and $b$'s.
\end{remark}

\begin{lemma}\label{p3_lm00}
		Let $A$ and $B$ be two $l\times l$ matrices. 

There exists a 
nonnegative definite $l\times l$ matrix $W$ satisfying
\begin{align}
	& \label{eq:matrixEq1} WA = B \\
	& \label{eq:matrixEq2} \ran W = \ran B
\end{align}
if and only if the following three conditions hold:
\begin{align}
	& \label{eq:cond1} \ker A \subseteq \ker B ,
	\\
	& \label{eq:cond2}A^* B = B^* A \ge 0, \\
	& \label{eq:cond3} \ran B \cap \ker(A^*) = \{0\}.
\end{align}
Moreover, the  solution is then unique and given by
\begin{equation}\label{eq:W}
	W  
	= B(B^* A)^+ B^*,
\end{equation}
where $X^+$ stands for the Moore--Penrose inverse of $X$. 
\end{lemma}
\begin{remarks}
	1. Recall that the Moore--Penrose inverse of $X$ is the unique matrix $X^+$ of the same size as $X$ such that
	\begin{align}
		& \label{eq:MP1} (XX^+)^* = XX^+, \\
		& \label{eq:MP2} (X^+ X)^* = X^+ X, \\
		& \label{eq:MP3} XX^+ X = X, \\
		& \label{eq:MP4}  X^+ X X^+ = X^+.
	\end{align}
	It is uniquely defined for any matrix $X$, and it coincides with $X^{-1}$ if $X$ happens to be invertible. 
	
	2. Necessary and sufficient conditions~\eqref{eq:cond1},~\eqref{eq:cond2},~\eqref{eq:cond3} for the matrix linear equation~\eqref{eq:matrixEq1} to have nonnegative definite solutions (without the extra requirement~\eqref{eq:matrixEq2}) were established by Khatri--Mitra~\cite{KhaMit}. 
	 The only new result in this lemma compared to ~\cite{KhaMit} is that condition~\eqref{eq:matrixEq2} ensures uniqueness. We provide here the full proof for completeness purposes, with some ideas borrowed from~\cite{DajKol08} which contains a nice review and further references.
	
	3. Conditions ~\eqref{eq:cond1} and \eqref{eq:cond3} have a different but equivalent form compared to the ones in~\cite{KhaMit}. See Lemma~\ref{lem:temp}.
\end{remarks}

\begin{lemma}\label{lem:temp}
	Let $A$ and $B$ be two $l\times l$ matrices.
	\begin{itemize}
		\item[(i)] $\ker A \subseteq \ker B$ if and only if $BA^+  A= B$.
		\item[(ii)] $\ran B \cap \ker(A^*) = \{0\}$ if and only if $\rank(A^* B) = \rank B$.
	\end{itemize}
\end{lemma}
\begin{proof}
	(i) That $BA^+ A = B$ implies $\ker B = \ker[BA^+ A] \supseteq \ker A$ is trivial. Conversely, suppose $\ker A \subseteq \ker B$. It is well known that $A^+ A$ is the orthogonal projection onto $\ran A^* = (\ker A)^\perp$. 
	Therefore $BA^+ A = B$ holds on $(\ker A)^\perp$. For $v\in\ker A$ we get also $v\in \ker B$, so that both $BA^+ A v=0$ and $Bv=0$. So indeed $BA^+ A = B$.
	
	(ii) Both conditions are equivalent to $\ker(A^* B) = \ker B$.
\end{proof}

\begin{proof}
	[Proof of Lemma~2.19]
	Suppose \eqref{eq:matrixEq1}--\eqref{eq:matrixEq2} has a unique nonnegative definite solution $W$. Then $WAA^+A = BA^+A$. This combined with~\eqref{eq:matrixEq1} and~\eqref{eq:MP3} gives $B = BA^+A$ which is~\eqref{eq:cond1} by Lemma~\ref{lem:temp}(i). Further, $W\ge 0$  implies $A^* W A = A^* B$ is also nonnegative definite. Finally, \eqref{eq:matrixEq2} implies that Then third also follows: $\rank(A^*B)=\rank(A^*WA) = \rank(A^*W)=\rank B^* = \rank B$. Here we used that $\rank(A^*WA) = \rank(A^*W)$ which is equivalent to $\ker(A^*WA) = \ker(WA)$ which follows from 
	$$
	v\in\ker(A^*WA) \Rightarrow A^*WA v =0 \Rightarrow
	||W^{1/2} A v ||=0
	\Rightarrow W^{1/2}W^{1/2} A v =0\Rightarrow v\in\ker(WA).
	$$
	
	Conversely, suppose \eqref{eq:cond1}, \eqref{eq:cond2}, \eqref{eq:cond3} hold. Define $W$ as in~\eqref{eq:W}. It is nonnegative definite by~\eqref{eq:cond2}. Let us show it solves \eqref{eq:matrixEq1}--\eqref{eq:matrixEq2}.

	By~\eqref{eq:cond3} and Lemma~\ref{lem:temp}(ii) we get $\ker(A^*B) \subseteq \ker B$ which by Lemma~\ref{lem:temp}(i) is  equivalent to $B(A^*B)^+A^*B = B$. By~\eqref{eq:cond2} this can be rewritten as $B(B^* A)^+ B^* A$ which is ~\eqref{eq:matrixEq1}. Clearly $\ran W = \ran(B(B^* A)^+ B^* )\subseteq \ran B$. But $WA= B$ implies $\ran W \supseteq \ran B$, which proves~\eqref{eq:matrixEq2}.
	
	Finally, we need to show uniqueness of $W$. $W$ maps $\ran A$ onto $\ran B$ and there it is uniquely defined by $WA = B$. $W$ is also uniquely defined on $(\ran B)^\perp = (\ran W)^\perp = (\ran W^*)^\perp = \ker W$ to be zero. Therefore $W$ is uniquely determined on the space $\ran A + (\ran B)^\perp$ whose dimension is 
	$$
	\dim \ran A + \dim (\ran B)^\perp - \dim \ran A\cap(\ran B)^\perp.
	$$
	Denote $\dim\ker A = n_A$ and $\dim\ker B = n_B$. By \eqref{eq:cond1}, $n_A \le n_B$. Bby the rank-nullity theorem $\dim\ran A = l-n_A$, $\dim (\ran B)^\perp 
	=n_B$. 
	
	Now, $\ran B = \ran |WA = \ran W$ means that $\rank B = \rank(WA) = \rank A - \dim \ran A\cap \ker W$, so that  $\dim \ran A\cap(\ran B)^\perp =  \dim \ran A\cap(\ker W) = \rank A - \rank B = n_B-n_A$. This leads to $\ran A + (\ran B)^\perp$ having dimension $l-n_A + n_B-(n_B-n_A) = l$. So $W$ is uniquely determined on  the whole $\bbC^l$.
\end{proof}



\end{section}


\section{Main Results}

\subsection{Part I. Direct Problem}


As was mentioned in the introduction, we are interested in the $l\times l$ matrix-valued solutions $(f_n(E))_{n=0}^\infty$ of
\begin{equation}\label{p3_eq1}
f_{n+1}(E)A_{n}^*+f_n(E)(B_{n}- \mathbf{1}E)+f_{n-1}(E)A_{n-1}=\mathbf{0}, \quad n=1,2,\ldots
\end{equation}

By~\eqref{p3_recur}, one solution of this is $f_n(E)=\mathfrak{p}_{n-1}^R(E,\calJ)$.

\begin{definition}\label{def1} For any two sequences $(v_n)_{n=0}^\infty$, $(w_n)_{n=0}^\infty$  their \textbf{Wronskian} is
$$
W_n(v,w;\calJ)=v_nA_nw_{n+1}-v_{n+1}A_n^* w_n.
$$
\end{definition}
If $v_n(E)$ and $w_n(E)$ both solve~(\ref{p3_eq1}), then $W_n(v_n(E),w_n(\bar{E})^*)$ is independent of $n$ (see~\cite{DPS}).

In this subsection we will be considering only $\calJ$ with $\esssup\mu=[-2,2]$, so it will be convenient to move from $\bbC\setminus[-2,2]$ to $\bbD$ via $z+z^{-1}=E$.

\begin{definition}\label{def2} The \textbf{Jost solution}, $\{u_n(z;\calJ)\}_{n=0}^\infty$, is a solution of \eqref{p3_eq1} with
\begin{equation}\label{p3_eq2}
z^{-n} u_n(z;\calJ)\to \bdone
\end{equation}
as $n\to\infty$, where $z+z^{-1}=E$.
\end{definition}
In general there may or may not be a solution of \eqref{p3_eq1} satisfying~\eqref{p3_eq2}, though there always exists an $\ell^2$ (Weyl's) solution of \eqref{p3_eq1} for $z\in\bbD$.

\begin{definition}\label{def3}
If the Jost solution exists (it is then unique, of course), then the \textbf{Jost function}  is defined to be
\begin{equation*}
u(z;{\calJ})=W(u_{\cdot}(z;{\calJ}),\mathfrak{p}_{\cdot-1}^L(z+z^{-1};{\calJ}))=u_0(z;\calJ),
\end{equation*}
where $\mathfrak{p}_n^L(z)$ are left orthonormal polynomials of $\calJ$.
\end{definition}
The last equality here comes from the constancy of the Wronskian.

In Section 4 we establish that the Jost solution and Jost function exist for block Jacobi matrices asymptotic to type $1$ (which includes type $2$ and $3$) under the condition
\begin{equation} \label{el1}
\sum_{n=1}^\infty \left[ ||B_n||+||\mathbf{1}-A_nA_n^*||\right]<\infty,
\end{equation}
and establish a number of their properties. See Theorems \ref{p3_th1}, \ref{p3_th5}, \ref{p3_th7000}. Some of there results (namely, Theorem \ref{p3_th1} and parts (iv)--(vi) of Theorem  \ref{p3_th5}) were already proven in Geronimo's \cite{Geronimo_matrix}.

These results also give us the following corollaries. Firstly, it's Theorem \ref{p3_th2}, which is Szeg\H{o}'s asymptotics under the $L^1$-type condition on the Jacobi coefficients. This result was already obtained by other methods in \cite{K_sz}, where Szeg\H{o} asymptotics is established in a larger generality. Second corollary is Theorem \ref{p3_th7000}, which is the matrix-valued analogue of a theorem in the Killip--Simon's paper \cite{KS}. It says that under the $L^1$-condition, the Jost function has trivial singular inner part.


\subsection{Part II. Inverse Problem}

Here we deal with the inverse direction.

Recall that zeros of a matrix-valued function $f$ are defined to be the poles of its inverse $f^{-1}$. A zero of $f$ is simple if the corresponding pole of $f^{-1}$ is simple.

First we prove

\begin{theorem}\label{p3_th_constr}
Let $u$ be an analytic function in a disk $\bbD_R=\{z\mid |z|<R\}$ for some $R>1$, satisfying
\begin{equation}\label{eq:symmetry}
	u(1/\bar{z})^* u(z) = \left(u(1/{z})^* u(\bar z)\right)^*,
\end{equation}
whose only zeros in $\overline{\bbD}$ lie in $(\overline{\D}\cap\R)\setminus\{0\}$ each being simple. For each zero $z_j$ in $({\D}\cap\R)\setminus\{0\}$, let a nonzero matrix-valued weight $w_j\ge0$ be given so that
\begin{itemize}
\item[(i)] $\sum_j w_j+ \frac2\pi \int_{0}^\pi \sin^2\theta\,\left[u(e^{i\theta})^* u(e^{i\theta})\right]^{-1}d\theta=\mathbf{1}$
\item[(ii)] $\ran w_j=\ker u(z_j)$ for all $j$.
\end{itemize}
Then there exists a unique measure $d\mu$ for which $w_j$ are the weights and $u$ is its Jost function for some choice of Jacobi matrix from the equivalence class corresponding to $d\mu$. Any such matrix is of type asymptotic to $1$.
\end{theorem}

Note that the conditions in this theorem are also necessary in view of Theorem \ref{p3_th5}.
Now that we established the existence of the measure $\mu$,  we can further specify the properties of $u$ which correspond (in an if-only-if fashion) to the prescribed exponential decay of the Jacobi parameters.

We will need the following definition, after which we will state the last two main theorems of the section.

\begin{definition}\label{canonic}
Let $u$ satisfy the conditions of Theorem \ref{p3_th_constr}. Suppose $u$  has a zero at some $1>|z_j|>R^{-1}$, $\ran w_j=\ker u(z_j)$. The weight  $w_j$ is said to be \textbf{canonical} if
\begin{equation*}\label{p3_eq53}
\frac{z_j}{z_j^{-1}-z_j}{w}_j \, u(1/{\bar{z}_j})^*=-(z_j-z_j^{-1})\lim_{z\to z_j}(z-z_j)u(z)^{-1}.
\end{equation*}
\end{definition}

\begin{theorem}\label{p3_th10}
If a polynomial $u(z)$  satisfying~\eqref{eq:symmetry} obeys
\begin{itemize}
\item[(i)] $u(z)$ is invertible on $(\overline{\D}\setminus \R)\cup\{0\}$;
\item[(ii)] all zeros on $\overline{\D}\cap\R$ are simple;
\item[(iii)]$\sum_jw_j+\frac2\pi \int_{0}^\pi \sin^2\theta\,\left[u(e^{i\theta})^* u(e^{i\theta})\right]^{-1}d\theta=\mathbf{1}$ for some $w_j\ge0$, $\ran w_j=\ker u(z_j)$ for each zero $z_j$ of $u$ in $\D\cap\R$,
\end{itemize}
then $u$ is the Jost function for a Jacobi matrix with exponentially converging parameters. It has  $\mathbf{1}-A_nA_n^*=B_n=\mathbf{0}$ for all large $n$ if and only if all the weights are canonical.
\end{theorem}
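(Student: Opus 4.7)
The plan is to obtain the Jacobi matrix from Theorem \ref{p3_th_constr} and then separately verify the exponential decay claim and the canonical weights criterion.

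First I would observe that the hypotheses of Theorem \ref{p3_th10} are a specialization of those of Theorem \ref{p3_th_constr}: since $u$ is a polynomial, it is analytic on every disk $\bbD_R$, and the only zeros in $\overline{\bbD}$ are (by (i)) on the real line, all simple (by (ii)); condition (iii) matches (i) of Theorem \ref{p3_th_constr}, and $\ran w_j = \ker u(z_j)$ matches (ii) there. Applying Theorem \ref{p3_th_constr} yields a unique measure $d\mu$ and a Jacobi matrix $\calJ$ (in the equivalence class of $d\mu$, asymptotic to type $1$) whose Jost function is $u$.

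For the exponential decay, I would show that for every $R>1$ the Jost solutions $u_n(z;\calJ)$ extend analytically to $\bbD_R\setminus\{0\}$, with $z^{-n}u_n$ analytic and invertible across $z=0$ by Theorems~\ref{p3_th1} and~\ref{p3_th5}. This extension is driven by the recurrence \eqref{p3_eq1}, which can be solved for $u_{n+1}$ in the form
\[
u_{n+1}(z) = \bigl[u_n(z)\bigl(\mathbf{1}(z+z^{-1}) - B_n\bigr) - u_{n-1}(z)A_{n-1}\bigr](A_n^{*})^{-1},
\]
so analyticity is inherited inductively from the base $u_0=u$. The Jacobi parameters $A_n,B_n$ can then be read off as matrix Fourier/Taylor coefficients of $u_n$ and $u_{n-1}$ at $z=0$ (via the conserved Wronskian $W_n(u(z),u(\bar z)^{*})$ and the standard formulas writing $A_n,B_n$ in terms of these coefficients). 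Because $u$ is entire, Cauchy estimates on the circle $|z|=R^{-1}$ give
\[
\|A_n - \mathbf{1}\| + \|B_n\| = O(R^{-n})
\]
for every $R>1$, which is the claimed exponential convergence.

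For the ``if and only if'' part, I would proceed as follows. Suppose all weights are canonical. Using the residue identities for matrix-valued meromorphic functions (Lemma \ref{p4_lm0}) together with \eqref{p3_eq53}, the $m$-function constructed from $u$ and $\{w_j\}$ admits an explicit rational representation as a finite-dimensional Schur complement of a finite block Jacobi matrix. Iterating \eqref{m_recur}, one extracts $A_1,B_1,A_2,B_2,\ldots$ until the once-stripped matrix satisfies $\mathfrak{m}(z;\calJ^{(N)}) = (\bdone z - \mathfrak{m}^{-1})^{-1}$ with no bound states, i.e.\ $\calJ^{(N)} = \calJ_0$, giving $A_n = \bdone$, $B_n = \bdnot$ for $n>N$. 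Conversely, if $\mathbf{1}-A_nA_n^{*} = B_n = \mathbf{0}$ for $n>N$, then $u(z;\calJ)$ is (up to an outer factor) the Wronskian of the free Jost solution with a finite-rank perturbation; computing the residues of $u(z)^{-1}$ at its zeros $z_j$ against the Aptekarev--Nikishin formula \eqref{m_recur} yields precisely the canonical identity \eqref{p3_eq53}.

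The main obstacle I anticipate is the matrix generalization of the canonical weights equivalence. In the scalar case, $w_j$ is a positive real and \eqref{p3_eq53} reduces to an elementary relation between a residue and a scalar weight, but in the matrix case both $w_j$ and $\res_{z=z_j} u(z)^{-1}$ are positive semi-definite matrices of intermediate rank $k_j = \dim\ker u(z_j)$. The condition $\ran w_j = \ker u(z_j)$ together with \eqref{p3_eq53} must be shown to pin down the full matrix structure of the residue, not merely its range; for this I expect to work in the Smith-McMillan basis at each $z_j$ (Lemma~\ref{p4_lmSM}) and track how the $k_j$ constraints assemble into the finite-rank reduction of $\calJ$ to $\calJ_0$.
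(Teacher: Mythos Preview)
Your reduction to Theorem~\ref{p3_th_constr} is correct, but the remainder has a real gap. The recurrence \eqref{p3_eq1} is second order, so your induction needs \emph{two} initial functions, $u_0$ and $u_1$, not just $u_0=u$. Now $u_1(z)=u(z)M(z)$, and while $u(z)M(z)$ is indeed analytic in $\bbD$ (the simple poles of $M$ at $z_j$ are killed by $\ker u(z_j)=\ran w_j$), its analyticity at the points $z_j^{-1}$ in $\{|z|>1\}$ is \emph{precisely} the canonical weight condition \eqref{p3_eq53}---this is the content of Theorem~\ref{p3_th5}(ix) and the remark after \eqref{one}. So your claim that $u_n$ extends analytically to $\bbD_R$ for every $R>1$, and hence that Cauchy estimates give $O(R^{-n})$ for every $R$, is false when the weights are not canonical: $u_1$ will have genuine poles at $z_j^{-1}$, and the best you get is $R=\min_j|z_j|^{-1}$. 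This does still give exponential decay at \emph{some} rate, which is all the first assertion claims, but your argument as written does not establish even that, because you never verified $u_1$ is analytic on any $\bbD_R$ with $R>1$.

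The paper's route is quite different from your Schur-complement/finite-rank-reduction sketch. It works not with the Jost solution $u_n$ but with the stripped Jost functions $u^{(n)}$ defined by \eqref{p3_eq41}--\eqref{p3_eq42}. The inductive engine is: (a) canonical weights make $u^{(1)}$ entire; (b) Lemma~\ref{p3_lm9} then propagates entirety to all $u^{(n)}$; (c) the identity \eqref{p3_eq4.24} forces $\deg u^{(n)}\le\max\{0,\deg u-2n\}$, so some $u^{(N)}$ is constant; (d) this lands you in the no-bound-state situation of Theorem~\ref{p3_th7}, which directly gives $\mathbf{1}-A_nA_n^*=B_n=\mathbf{0}$ eventually. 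The converse is immediate: if a weight is not canonical then $u^{(1)}$ is not entire, contradicting Theorem~\ref{p3_th1}. Your proposed argument for the ``if'' direction---that canonical weights force $M$ to be a Schur complement of a finite block Jacobi matrix---would need substantial work to make rigorous, and the ``only if'' direction via Wronskians and the Aptekarev--Nikishin formula is too vague to assess; in particular you have not said which residue computation actually recovers \eqref{p3_eq53}.
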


\begin{remarks}
1. It's easy to see that if the degree of $u$ is $k$, then $\mathbf{1}-A_nA_n^*=B_n=\mathbf{0}$ holds for $n\ge \lceil \frac{k}{2} \rceil + 1$ (follows from Theorem \ref{p3_th1}).

2. Condition (iii) assumes existence of nonnegative definite matrix $w_j$ that solves a linear matrix system of the form~\eqref{eq:matrixEq1}--\eqref{eq:matrixEq2}, see Lemma~\ref{p3_lm00} for the explicit necessary and sufficient conditions on $u$, and the formula for $w_j$ in terms of $u$. 
\end{remarks}

\begin{theorem}\label{p3_th11}
Let $u(z)$ be analytic in $\bbD_R$ for some $R>1$ and obeys~\eqref{eq:symmetry}, (i), (ii), (iii) of Theorem~{\ref{p3_th10}}. Then $u$ is the Jost function for a Jacobi matrix with exponentially converging parameters. It has
\begin{equation*}\label{p3_eq4.35}
\limsup_{n\to\infty}\left(||B_n||+||\mathbf{1}-A_nA_n^*||\right)^{1/2n}\le R^{-1}
\end{equation*}
if and only if all weights for $z_j$ with $1>|z_j|>R^{-1}$ are canonical.
\end{theorem}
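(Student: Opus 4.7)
The existence of a Jacobi matrix $\calJ$ with $u$ as its Jost function is immediate from Theorem~\ref{p3_th_constr}, since conditions (i)--(iii) of Theorem~\ref{p3_th10} are exactly its hypotheses. What remains is to prove exponential decay of the Jacobi parameters together with the if-and-only-if characterization of the rate.

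My plan is to run an iterative zero-removal reduction in the spirit of \cite[Chapter~13]{OPUC2} and Theorem~\ref{p3_th10}. The function $u$ has only finitely many zeros $z_j$ in the compact annulus $\{R^{-1}<|z|<1\}$, and for each such zero with canonical weight $w_j$ I would form
\[
\wti u(z)=u(z)\,B_{z_j,s_j,U_j}(z)^{-1},
\]
with the Blaschke--Potapov factor chosen so that $\ker B_{z_j,s_j,U_j}(z_j)=\ker u(z_j)$ (Lemma~\ref{product}). The canonicality identity~\eqref{p3_eq53}, together with the residue description of $u(z)^{-1}$ at $z_j$ from Lemma~\ref{p4_lm0}, is exactly what makes $\wti u$ analytic and invertible at $z_j$, preserves the normalization (iii) (with $w_j$ removed), and avoids creating a spurious pole at the reflected point $1/\bar z_j$. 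Applying Theorem~\ref{p3_th_constr} to $\wti u$ yields a new Jacobi matrix $\wti\calJ$ with Jost function $\wti u$ whose spectral measure differs from that of $\calJ$ in exactly one bound state. Because the removal is mediated by a canonical weight, the transition between $\calJ$ and $\wti\calJ$ is itself of rate $\le R^{-1}$, so the two matrices share the same exponential decay rate.

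Iterating this removal over every $z_j$ with $|z_j|>R^{-1}$ reduces the problem to the case that $u$ is analytic on $\bbD_R$ with no zeros in $\{R^{-1}<|z|<1\}$. For such $u$, the Jost solution defined by~\eqref{p3_eq2} can be constructed by a Volterra-type series converging in $\bbD_R$: variation of parameters based on~\eqref{p3_eq1} yields $z^{-n}u_n(z)\to\bdone$ uniformly on each compact subset of $\bbD_R\setminus\{0\}$, and feeding this back into~\eqref{p3_eq1} gives $\|B_n\|+\|\bdone-A_nA_n^*\|=\Oh(r^{2n})$ for every $r>R^{-1}$, which is~\eqref{p3_eq4.35}. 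For the converse direction, assuming~\eqref{p3_eq4.35}, the same Volterra construction converges on compacts of $\bbD_R$, so $u$ extends analytically there; then for each bound state $z_j$ with $|z_j|>R^{-1}$ the canonical identity~\eqref{p3_eq53} drops out of equating two evaluations of $\res_{E=E_j}\mathfrak{m}(E)$---one via~\eqref{p2_herg2} giving $w_j$, and one via the Jost-function factorization of $\mathfrak{m}$, i.e.\ a matrix-valued scattering relation of the form $\mathfrak{m}(z+z^{-1})\sim u(1/\bar z)^{*}u(z)^{-1}$ valid in the extended domain.

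The hard part will be the zero-removal step itself: namely, verifying that the canonicality in~\eqref{p3_eq53} is equivalent to the function-theoretic requirement that the Blaschke--Potapov quotient $u\cdot B_{z_j,s_j,U_j}^{-1}$ lands in the admissible class of Jost functions, with the correct ``mirror'' structure at $1/\bar z_j$. In the scalar setting this reduces to a single residue computation as in~\cite{DS2}; in the matrix setting one has to track the non-commutative interplay between $\ker u(z_j)$, $\ran w_j$, and the unitary data $(s_j,U_j)$ of the Blaschke--Potapov factor, and Lemmas~\ref{p3_lm00} and~\ref{p4_lm0} are the tools that make this tracking possible.
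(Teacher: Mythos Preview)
Your approach is genuinely different from the paper's, and it has a real gap.

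The paper does \emph{not} remove bound states by dividing out Blaschke--Potapov factors. Instead it uses \emph{coefficient stripping}: starting from $u^{(0)}=u$, it defines $u^{(n+1)}(z)=z^{-1}u^{(n)}(z)M^{(n)}(z)A_{n+1}$ via~\eqref{p3_eq41}--\eqref{p3_eq42}, and the entire argument hinges on showing that if the weights are canonical then each $u^{(n)}$ stays analytic in $\bbD_R$ (Lemmas~\ref{p3_lm9},~\ref{p3_lm10},~\ref{inductive}). Once analyticity is known, the $P_+$-projection estimate~\eqref{p3_eq4.28}--\eqref{p3_eq4.29} from the no-bound-state case goes through verbatim and yields~\eqref{p3_eq4.35}. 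For the converse, if some weight with $|z_j|>R^{-1}$ is not canonical then $u^{(1)}$ acquires a pole at $z_j^{-1}$, and since under~\eqref{p3_eq4.35} the stripped Jost function $u^{(1)}$ would have to be analytic in $\bbD_R$, the decay cannot hold. No Volterra series and no spectral-measure surgery are needed.

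Your proposal, by contrast, wants to pass from $\calJ$ to a modified $\wti\calJ$ with one fewer bound state and then assert that ``the transition between $\calJ$ and $\wti\calJ$ is itself of rate $\le R^{-1}$.'' That sentence is the whole theorem in disguise, and you have not proved it. In the scalar case this is exactly the finite-stripping lemma of \cite[Thm~3.1]{DS2}, which the paper's acknowledgements explicitly call ``troublesome'' and deliberately circumvent; the matrix-valued version only appeared later in~\cite{Simon_removal}. There are also secondary issues: your factor $\wti u=u\,B^{-1}$ multiplies on the right, but in the matrix case $B$ is only unitary on $\partial\bbD$ in the sense $B^*B=\bdone$, so $\wti u^*\wti u\ne u^*u$ in general and condition~(iii) is not preserved---the Blaschke--Potapov factor has to come off on the left (cf.\ Theorem~\ref{p3_th7000}); and the asserted scattering relation $\mathfrak{m}\sim u(1/\bar z)^*u(z)^{-1}$ is not the one that holds here (the correct identity is~\eqref{p3_eq25}). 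The coefficient-stripping route avoids all of this because passing from $\calJ$ to $\calJ^{(1)}$ is a trivial operation on the Jacobi side, and the analytic work is entirely on the $u$-side.
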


\begin{remark}
By ``exponentially converging parameters'' it is meant that they satisfy $$\limsup_{n\to\infty}\left(||B_n||+||\mathbf{1}-A_nA_n^*||\right)^{1/2n}\le r^{-1}$$ for some $r$  (in general $r=\min_j\{|z_j|^{-1}\}$, unless some of the weights are canonical).
\end{remark}




\begin{subsection}{Part III. Meromorphic Continuations of Matrix Herglotz Functions and Perturbations of the Free Case}
As we mentioned earlier, the results of Part III are new even for the scalar case $l=1$. Note that in this setting, (D) of Theorems \ref{p3_th17} and \ref{p3_th18} is reduced to the much simpler condition that $M$ has no simultaneous singularities at points $z_j$ and $z_j^{-1}$ (see Proposition \ref{propos}).

We will consider measures $\mu$ with essential support one interval.  By scaling and translating we can assume that $\esssup\mu=[-2,2]$. Instead of discussing  meromorphic continuations of $\mathfrak{m}$ (see \eqref{p3_m-func}) through $(-2,2)$, it will be convenient to move $\bbC\setminus [-2,2]$ to $\bbD$  via the inverse of $z \mapsto z+z^{-1}$, and discuss the meromorphic continuations of
\begin{equation}\label{EM}
M(z)=-\mathfrak{m}(z+z^{-1})
\end{equation}
from $\bbD$ through $\partial\bbD$. Note that $M$ is also Herglotz in the meaning that $\imag M(z)\gtrless \bdnot$ if $z\in\bbC_{\pm}\cap \bbD$.

Let us use the notation $M^\sharp(z)=M(\bar{z}^{-1})^*$.

Note that any Herglotz function $m$ has an associated measure $\mu$ (Lemma \ref{p4_ges}), which has an associated class of equivalent block Jacobi matrices.

We prove the following result.

\begin{theorem}\label{p3_th17}
Let $\mathfrak{m}$ be a discrete $l\times l$ matrix-valued $m$-function, and $M$ is given by \eqref{EM}. Let $R>1$. The following are equivalent:
\begin{itemize}
\item[(I)] The corresponding to $\mathfrak{m}$ Jacobi matrices $\{A_n,B_n\}_{n=1}^\infty$ satisfy
    \begin{equation*}\label{p3_eq5.3}\limsup_{n\to\infty}\left(||B_n||+||\mathbf{1}-A_nA_n^*||\right)^{1/2n}\le R^{-1}.\end{equation*}
\item[(II)] All of the following holds:
\begin{itemize}
\item[(A)] $M$ has a meromorphic continuation to $\bbD_R$.
\item[(B)] $M$ has no poles on $\partial\bbD\setminus\{\pm1\}$, and at most simple poles at $\pm1$.
\item[(C)] $(M(z)-M^\sharp(z))^{-1}$ has no poles in $R>|z|>R^{-1}$ except at $z=\pm1$ where there might be simple poles.
\item[(D)] If $M$ has a pole at $z_j\in \{z: R^{-1}<|z|<1\}$ and at $z_j^{-1}$, then 
\begin{gather}
\label{p3_eq5.4}
\ran \res_{z=z_j}M(z) \subseteq \ker (M(z_j^{-1})-M^\sharp(z_j^{-1}))^{-1} ,\\
\label{p3_eq5.5}
\ran \res_{z=z_j}M(z) \subseteq \left(\ran (M(z^{-1}_j)-M^\sharp(z^{-1}_j))^{-1} M(z^{-1}_j)\right)^\perp.
\end{gather}
\end{itemize}
\end{itemize}
\end{theorem}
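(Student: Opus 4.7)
The plan is to reduce Theorem \ref{p3_th17} to Theorem \ref{p3_th11} by rephrasing both (I) and (II) in terms of the Jost function. By Theorem \ref{p3_th11}, condition (I) is equivalent to the existence of a Jost function $u(z)$ analytic in $\bbD_R$, satisfying (i)--(iii) of Theorem \ref{p3_th10}, and whose weight $w_j$ at each zero $z_j\in(\bbD\cap\bbR)\setminus\{0\}$ with $|z_j|>R^{-1}$ is canonical. Call this combined condition (III); the task becomes proving (II) $\Leftrightarrow$ (III).

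The key bridge is an identity expressing $M$ in terms of $u$. From Stieltjes inversion combined with Theorem \ref{p3_th10}(iii) (which gives the a.c.\ weight as a constant multiple of $[u(e^{i\theta})^* u(e^{i\theta})]^{-1}$), one verifies on $\partial\bbD$, and hence by meromorphic continuation to the annulus $\{R^{-1}<|z|<R\}$,
\begin{equation*}
M(z)-M^\sharp(z)\;=\;(z-z^{-1})\,\bigl[u^\sharp(z)\,u(z)\bigr]^{-1},
\end{equation*}
where $u^\sharp(z):=u(1/\bar z)^*$. Using this identity together with the recursion \eqref{m_recur} iterated against the once-stripped companion of $u$, one also obtains a meromorphic formula for $M(z)$ itself on the annulus, whose singularities are exactly the zeros of $u$ inside $\bbD$ and of $u^\sharp$ outside $\bbD$.

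With this identity in hand, (A)--(C) of (II) follow from (III) essentially mechanically. Part (A) comes from analyticity of $u$ in $\bbD_R$ and the fact that its zeros are confined to $\bbR\setminus\{0\}$. Part (B) uses Theorem \ref{p3_th5} together with (i) of Theorem \ref{p3_th10}: $u$ does not vanish on $\partial\bbD\setminus\{\pm1\}$ and has at most simple zeros at $\pm1$. Part (C) is immediate, since the identity yields $(M-M^\sharp)^{-1}=(z-z^{-1})^{-1}\,u^\sharp(z)\,u(z)$, which is analytic on $\{R^{-1}<|z|<R\}$ apart from simple poles at $z=\pm1$ coming from the factor $(z-z^{-1})^{-1}$.

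The principal obstacle is (II)(D). For $z_j$ with $R^{-1}<|z_j|<1$ a pole of $M$, Lemma \ref{p4_lm0} applied to $u$ at $z_j$ yields $\ran \res_{z=z_j}M(z)=\ran w_j=\ker u(z_j)$. The same lemma applied to $u^\sharp(z)\,u(z)$ at $z_j^{-1}$ identifies the kernel of $(M-M^\sharp)^{-1}$ and the range of $(M-M^\sharp)^{-1}M$ at $z_j^{-1}$ with subspaces determined by $u(z_j)^*$ and $u(z_j^{-1})$. Substituting these identifications into the canonicity relation \eqref{p3_eq53} shows that the matrix equation defining canonicity is equivalent to the pair of inclusions \eqref{p3_eq5.4}--\eqref{p3_eq5.5}; the converse direction uses the Smith--McMillan form (Lemma \ref{p4_lmSM}) to reconstruct the canonicity equation from the given range and kernel inclusions. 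The subtlety, absent in the scalar case (where (D) reduces to asserting that $M$ has no simultaneous singularities at $z_j$ and $z_j^{-1}$), is that the residue of $M$ and its companion objects carry nontrivial matrix-valued geometric data, and canonicity requires precisely that these subspaces be compatible---which is exactly what (D) encodes.
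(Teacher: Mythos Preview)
Your overall strategy---translating both sides into a statement (III) about the Jost function and then invoking Theorem~\ref{p3_th11}---is the same as the paper's, and your treatment of (III)$\Rightarrow$(II) is essentially correct. The genuine gap is in the direction (II)$\Rightarrow$(III): you never explain how to \emph{produce} $u$ from the data in (II). Your ``key bridge'' identity $M-M^\sharp=(z-z^{-1})[u^\sharp u]^{-1}$ presupposes that $u$ already exists; on the circle it determines only $u(e^{i\theta})^*u(e^{i\theta})$, not $u$ itself, and it says nothing a priori about analyticity beyond $\overline{\bbD}$. The paper constructs $u$ explicitly: first the outer factor $O$ via Wiener--Masani (Lemma~\ref{p2_lm2}) from $\sin\theta\,(\imag M(e^{i\theta}))^{-1}$, then a Blaschke--Potapov product $B$ chosen (Lemma~\ref{product}) so that $\ker B(z_j)O(z_j)=\ran\widetilde w_j$ at each pole $z_j$ of $M$ in $\bbD$. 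At this stage $u=BO$ is only an $H^2(\bbD)$ function; one then defines a candidate continuation $\widehat u$ on $\{1<|z|<R\}$ by the right-hand side of the identity, uses (C) and a Smirnov/Schwarz-reflection argument to glue $\widehat u$ to $u$ across $\partial\bbD$, and finally uses (D) to kill the apparent poles of $\widehat u$ at the points $z_j^{-1}$ and to force the weights to be canonical. None of this is in your proposal.

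A smaller point: your reference to ``Theorem~\ref{p3_th10}(iii)'' for the a.c.\ density formula is off---that item is a normalization condition on $u$; the formula you want is Theorem~\ref{p3_th5}(vi). Also, the equivalence ``(I)$\Leftrightarrow$(III)'' is not a single citation of Theorem~\ref{p3_th11}: (I)$\Rightarrow$(III) comes from the direct theory (Theorem~\ref{p3_th5}, especially parts (iv), (viii), (ix)), while (III)$\Rightarrow$(I) is Theorem~\ref{p3_th_constr} followed by Theorem~\ref{p3_th11}.
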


Note that $R=\infty$ is allowed, in which case (I) states that the decay of the Jacobi coefficients is super-exponential, while in (II) $M$ is meromorphic in $\bbC$. We can also demand that $M$ is actually meromorphic in $\bbC\cup\{\infty\}$ (which, of course, is the same as saying that $M$ is a rational matrix function). This corresponds to strengthening the condition (I) to \eqref{p3_eq5.8}. Therefore we are able to characterize all possible $M$-functions of eventually-free Jacobi matrices.

\begin{theorem}\label{p3_th18}
Let $\mathfrak{m}$ be a discrete $l\times l$ matrix-valued $m$-function, and $M$ is given by \eqref{EM}.
The following are equivalent:
\begin{itemize}
\item[(I)] The corresponding to $\mathfrak{m}$ Jacobi matrices $\{A_n,B_n\}_{n=1}^\infty$ satisfy
    \begin{equation}\label{p3_eq5.8}||B_n||+||\mathbf{1}-A_nA_n^*||=\bdnot \quad \mbox{for all large }n.\end{equation}
\item[(II)] All of the following holds:
\begin{itemize}
\item[(A)] $M$ is a rational matrix function.
\item[(B)] $M$ has no poles on $\partial\bbD\setminus\{\pm1\}$, and at most simple poles at $\pm1$.
\item[(C)] $(M(z)-M^\sharp(z))^{-1}$ has no poles in $\bbC\setminus\{0\}$ except at $z=\pm1$ where there might be simple poles.
\item[(D)] If $M$ has a pole at $z_j\in \bbD$ and at $z_j^{-1}$, then 
\begin{gather}
\label{p3_eq5.9}
\ran \res_{z=z_j}M(z) \subseteq \ker (M(z_j^{-1})-M^\sharp(z_j^{-1}))^{-1} ,\\
\label{p3_eq5.10}
\ran \res_{z=z_j}M(z) \subseteq \left(\ran (M(z^{-1}_j)-M^\sharp(z^{-1}_j))^{-1} M(z^{-1}_j)\right)^\perp.
\end{gather}
\end{itemize}
\end{itemize}
\end{theorem}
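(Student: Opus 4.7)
The plan is to deduce Theorem \ref{p3_th18} from Theorem \ref{p3_th10} via an explicit dictionary between the $M$-function and the Jost function $u$. This is the same strategy that will be used to prove Theorem \ref{p3_th17}, now specialised to the boundary case $R=\infty$ in which ``exponentially converging parameters'' becomes ``eventually constant parameters.''

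\textbf{Dictionary step.} By the theory developed in Section \ref{JostAs}, the Jost solution $\{u_n(z)\}$ is the unique normalised $\ell^2$-solution of \eqref{p3_eq1} on $\bbD$ and hence spans the same line as the Weyl solution. Applying the constancy of the Wronskian to the pairs $(u,\mathfrak{p}^L_{\cdot-1})$ and $(u,u^\sharp)$ together with the spectral integral \eqref{p3_m-func}, one obtains two key identities (which also arise in the proof of Theorem \ref{p3_th17}): a Wronskian identity of the shape
\begin{equation*}
\bigl(M(z)-M^\sharp(z)\bigr)^{-1} \;=\; \tfrac{1}{z^{-1}-z}\, u^\sharp(z)^* u(z),
\end{equation*}
and an expression
\begin{equation*}
M(z) \;=\; P(z) + Q(z)\, u(z)^{-1} R(z)
\end{equation*}
for $M$ in terms of $u^{-1}$, with low-degree matrix polynomial coefficients $P,Q,R$ depending on $A_1, B_1$ and $u_1$. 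These identities let us transfer analytic and algebraic properties between $M$ and $u$.

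\textbf{Direction (I) $\Rightarrow$ (II).} Assume \eqref{p3_eq5.8}. Theorem \ref{p3_th10} then supplies a Jost function $u$ which is a polynomial whose zeros on $\overline{\bbD}\cap\bbR$ are simple and whose interior weights $w_j$ are all canonical. Properties (A) and (B) are immediate from the second identity: $M$ is rational, and its only singularities on $\partial\bbD$ come from zeros of $u$ there, hence lie at $\pm 1$ and are simple. Statement (C) follows from the Wronskian identity: $(M - M^\sharp)^{-1}$ equals a matrix polynomial divided by $z-z^{-1}$, so has no poles in $\bbC\setminus\{0\}$ except at most simple ones at $\pm 1$. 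Finally, (D) is the matrix-valued translation of the canonical-weight equation \eqref{p3_eq53}: at a zero $z_j$ of $u$ in $\bbD$, Lemma \ref{p4_lm0} identifies $\ran \res_{z=z_j} u(z)^{-1}$ with $\ker u(z_j)$, and rewriting \eqref{p3_eq53} in these terms, combined with the dictionary, yields exactly the inclusions \eqref{p3_eq5.9}--\eqref{p3_eq5.10}.

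\textbf{Direction (II) $\Rightarrow$ (I).} Assume (A)--(D). Define a candidate Jost function by factoring
\begin{equation*}
u^\sharp(z)^* u(z) \;=\; (z^{-1}-z)\bigl(M(z)-M^\sharp(z)\bigr)^{-1},
\end{equation*}
whose right-hand side is, by (A) and (C), rational on $\bbC\setminus\{0\}$ with at worst simple poles at $\pm 1$ that are cancelled by the prefactor $z^{-1}-z$. The Wiener--Masani theorem (Lemma \ref{p2_lm2}) extracts the outer part of $u$ on $\partial\bbD$, and Lemma \ref{product} produces the Blaschke--Potapov part, with the kernel subspaces $V_k = \ker u(z_k)$ at interior zeros dictated by (D) through \eqref{p3_eq5.9}. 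The growth constraint $M(z) = z^{-1}\bdone + O(z^{-2})$ at $\infty$ (equivalent, via \eqref{EM}, to $\calJ$ having a discrete $m$-function, Definition \ref{disctrete}) forces $u$ to be a matrix polynomial with leading coefficient $\bdone$, i.e., already in the correct Jost normalisation. Let $w_j$ be (a positive scalar multiple of) $\res_{z=z_j} M(z)$; the Herglotz representation (Lemma \ref{p4_ges}) together with the dictionary identities ensures both that $\sum_j w_j + \frac{2}{\pi}\int_0^\pi \sin^2\theta\,[u(e^{i\theta})^* u(e^{i\theta})]^{-1}d\theta = \bdone$ and that $\ran w_j = \ker u(z_j)$. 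The inclusions in (D) are now re-read, again via Lemma \ref{p4_lm0}, as the canonical-weight equation \eqref{p3_eq53}. Invoking Theorem \ref{p3_th10} concludes (I).

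\textbf{Main obstacle.} The subtlest point is the matrix-theoretic equivalence of (D) with the canonical condition \eqref{p3_eq53}. In the scalar case $l=1$, (D) collapses (as announced in the remark before Theorem \ref{p3_th17}) to the single condition that $M$ have no simultaneous singularities at $z_j$ and $z_j^{-1}$; but in the matrix setting one must carefully pair ranges and kernels of residues on both sides of \eqref{p3_eq53}, requiring repeated use of the Smith--McMillan local form (Lemmas \ref{p4_lmSM}, \ref{p4_lm0}). A secondary technical issue is ensuring the correct Jost normalisation from the Wiener--Masani / Blaschke--Potapov factorisation, which is what fixes the residual unitary freedom in the outer factor.
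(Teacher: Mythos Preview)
Your overall strategy---translate between $M$ and the Jost function $u$ via the identity $(M-M^\sharp)^{-1}=(z-z^{-1})^{-1}u^\sharp u$, then invoke Theorem~\ref{p3_th10}---is exactly the paper's approach, and your handling of conditions (B), (C), (D) tracks the proof of Theorem~\ref{p3_th17} closely. A few points need correction.

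First, two small slips: the Wronskian identity has $u^\sharp(z)u(z)$, not $u^\sharp(z)^*u(z)$ (recall $u^\sharp(z)=u(\bar z^{-1})^*$ already carries a star); and for (I)$\Rightarrow$(II) you should be citing the \emph{direct} results---Theorem~\ref{p3_th1} (so $u$ is a polynomial) and Theorem~\ref{p3_th5}(viii),(ix)---rather than Theorem~\ref{p3_th10}, which goes the other way.

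The genuine gap is in (II)$\Rightarrow$(I), in your argument that $u$ is a polynomial. You write that the asymptotic $M(z)=z^{-1}\bdone+O(z^{-2})$ at $\infty$ ``forces $u$ to be a matrix polynomial with leading coefficient $\bdone$.'' This does not work: the Wiener--Masani/Blaschke--Potapov construction only gives you $u$ as an $H^2(\bbD)$ function, and no normalisation at $\infty$ is visible from inside $\bbD$. What the paper does (following the proof of Theorem~\ref{p3_th17}) is define an extension $\widehat u(z)=(z-z^{-1})\,u^\sharp(z)^{-1}(M(z)-M^\sharp(z))^{-1}$ for $|z|>1$, argue via boundary values and Schwarz reflection that $\widehat u$ really continues $u$ meromorphically, use (D) to kill the potential poles at $z_j^{-1}$ so that $u$ is entire, and \emph{then} read off from this formula and the rationality of $M$ that $u$ has at most polynomial growth at $\infty$---hence is a polynomial by Liouville. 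Also, the leading coefficient of $u$ is not $\bdone$ (cf.\ Theorem~\ref{p3_th1}); the residual unitary freedom is not fixed by a normalisation at $\infty$ but is exactly the ambiguity described in the remark after Theorem~\ref{p3_th_constr}.
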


\begin{remarks}
1. Condition \eqref{p3_eq5.4}/\eqref{p3_eq5.9} implies that $(M(z)-M^\sharp(z))^{-1} M(z)$ is analytic at $z_j^{-1}$, so the right-hand sides of \eqref{p3_eq5.5}/\eqref{p3_eq5.10} make sense.

2.  $M$ can have poles of at most order $1$ in  $\overline{\bbD}$, however poles in $\bbC\setminus \overline{\bbD}$ may be of arbitrary order. In (D), if $M$ is assumed to have poles\textit{ of order $1$} at both $z_j$ and $z_j^{-1}$ then \eqref{p3_eq5.4}--\eqref{p3_eq5.5} (\eqref{p3_eq5.9}--\eqref{p3_eq5.10}) are equivalent to
\begin{gather*}
\ran \widetilde{w}_j \subseteq \ran (\widetilde{w}_j-z_j^2 \widetilde{q}_j),\\
\ran \widetilde{w}_j \cap \ran \widetilde{q}_j=\varnothing,
\end{gather*}
where $\widetilde{w}_j=-\res_{z=z_j} M(z)$, $\widetilde{q}_j=\res_{z= z^{-1}_j} M(z)$ (see Proposition \ref{propos}).

3. If $l=1$ then (D) is equivalent to the condition that $M$  has no simultaneous singularities at points $z_j$ and $z_j^{-1}$ (see Proposition \ref{propos}).

4. See also \cite[Thm 14]{Geronimo} for a somewhat related result on the relation between the exponential decay of Jacobi parameters and properties of the measure $\mu$ (for the scalar $l=1$ case).

5. Conditions (A) and (C) can be restated in terms of the meromorphic continuation 
of the absolutely continuous density $f(2\cos\theta)$ (as a function of $e^{i\theta}\in\partial\bbD$). 
  Condition (B) of course just means that there is no point spectrum of $\mu$ on $[-2,2]$. Condition (D) depends on both absolutely continuous and pure point parts of the measure. 
\end{remarks}

\end{subsection}


\section{Jost Asymptotics for Matrix-Valued Orthogonal Polynomials}
In this section we will be using notation
\begin{align*}
&\calJ^{(k)}=\left(
\begin{array}{cccc}
B_{k+1}&A_{k+1}&\mathbf{0}&\\
A_{k+1}^{*}&B_{k+2}&A_{k+2}&\ddots\\
\mathbf{0}&A_{k+2}^*&B_{k+3}&\ddots\\
&\ddots&\ddots&\ddots\end{array}\right),
 \\
&\widetilde{\calJ}_k=\left(
\begin{array}{cccccccc}
B_1&A_1&\mathbf{0}\\
A_1^{*}&B_2&A_2\\
\mathbf{0}&A_2^*&\ddots&\ddots\\
&&\ddots&\ddots&\ddots\\
&&&A_{k-1}^* &B_k&A_k&\mathbf{0}\\
&&&\mathbf{0}&A_k^*&\mathbf{0}&\mathbf{1}\\
&&&\mathbf{0}&\mathbf{0}&\mathbf{1}&\mathbf{0}&\ddots\\
&&&&&&\ddots&\ddots
\end{array}\right).
\end{align*}

Recall that we introduced the $M$-functions $M(z)=-\mathfrak{m}(z+z^{-1})$. Denote $M^{(k)}(z)$ to be the $M$-function corresponding to $\calJ^{(k)}$ (in particular $M^{(0)}=M$). Then the relation \eqref{m_recur} takes the form
\begin{equation}\label{p3_m-recur}
A_{n+1} M^{(n+1)}(z)A_{n+1}^*=\left(z+\frac1z\right)\mathbf{1}-B_{n+1}-{M^{(n)}(z)}^{-1}
\end{equation}
for $z\in \bbD$, $n\ge0$.

Since $M^{(n)}(z)/z=\bdone+O(z)$ at $z=0$, this gives
\begin{equation}\label{p3_m-taylor}
\left(\frac{M^{(n)}(z)}{z}\right)^{-1}=\bdone-B_{n+1}z-(A_{n+1}A_{n+1}^*-\bdone)z^2+O(z^3).
\end{equation}

\subsection{Jost Function via the Geronimo--Case Equations}

\subsubsection{Jost function for eventually free Jacobi matrices}

First we will show existence and derive some properties of the Jost solution and the Jost function for the matrices $\widetilde{\calJ}_k$. Clearly we can construct a unique solution $u_n(z;\widetilde{\calJ}_k)$ which solves~(\ref{p3_eq1}) for $\widetilde{\calJ}_k$ and satisfies $u_n(z;\widetilde{\calJ}_k)=z^n \mathbf{1}$ if $n\ge k+1$, where $z+z^{-1}=E$. 

Since $u_k(z;\widetilde{\calJ}_k)=z^k A_k^{-1}$, taking the Wronskian at $n=k$, we find,
\begin{equation*}
u(z;\widetilde{\calJ}_k)=z^k \mathfrak{p}_k^L(z+z^{-1};\widetilde{\calJ}_k)-z^{k+1}A_k^* \mathfrak{p}_{k-1}^L(z+z^{-1};\widetilde{\calJ}_k).
\end{equation*}

This suggests to define
\begin{equation}\label{p3_eq3}
g_n(z)=z^n\left( \mathfrak{p}_n^L\left(z+z^{-1}; \calJ\right)-zA_n^* \mathfrak{p}_{n-1}^L\left(z+z^{-1};\calJ\right)\right)
\end{equation}
and
\begin{equation*}\label{p3_eq4}
c_n(z)=z^n \mathfrak{p}_n^L\left(z+z^{-1};\calJ\right).
\end{equation*}

Clearly $g_n$ is a polynomial in $z$ of degree at most $2n$, and $c_n$ of degree exactly $2n$. The equation~(\ref{p3_eq3}) can be written as
\begin{equation}\label{p3_eq5}
g_{n}(z)=c_{n}(z)-z^2 A_{n}^* c_{n-1}(z).
\end{equation}

Since $\mathfrak{p}_n^L(z;\calJ)=\mathfrak{p}_n^L(z;\widetilde{\calJ}_k)$ for $n\le k$, we have
\begin{equation}\label{p3_eq6}
g_n(z)=u(z;\widetilde{\calJ}_n).
\end{equation}

Multiplying by $z^{n+1}$ the recursion relation for left orthogonal polynomials (we will start writing $\mathfrak{p}_n(z)$ instead of $\mathfrak{p}_n(z;\calJ)$ when $\calJ$ is clear from the context)
\begin{equation*}
A_{n+1}\mathfrak{p}_{n+1}^L\left(z+\frac1z\right)+\left(B_{n+1}- \left(z+\frac1z\right)\mathbf{1}\right)\mathfrak{p}_n^L\left(z+\frac1z\right)\\
+A_n^* \mathfrak{p}_{n-1}^L\left(z+\frac1z\right)=\mathbf{0}
\end{equation*}
and using~(\ref{p3_eq5}), we get
\begin{equation}\label{p3_gc1}
A_{n+1}c_{n+1}(z)=\left(z^2 \mathbf{1}-z B_{n+1}\right) c_n(z)+g_n(z).
\end{equation}
Combining (\ref{p3_eq5}) and (\ref{p3_gc1}), we obtain
\begin{equation}\label{p3_gc2}
A_{n+1}g_{n+1}(z)=\left(z^2 \left(\mathbf{1}-A_{n+1}A_{n+1}^*\right)-z B_{n+1}\right) c_n(z)+g_n(z).
\end{equation}

The recursion equations~(\ref{p3_gc1}) and~(\ref{p3_gc2}) with the initial conditions $g_0(z)=c_0(z)=\mathbf{1}$ are called the Geronimo-Case equations. They can also be written in the form
\begin{equation}\label{p3_eq8}
\left(\begin{array}{c}c_{n+1}\\g_{n+1}\end{array}\right)=V_{n+1}\left(\begin{array}{c}c_{n}\\g_{n}\end{array}\right),
\end{equation}
where $V_n$ is the $2l\times 2l$ matrix
\begin{equation}\label{p3_eq9}
V_n(z)=\left(\begin{array}{cc}A_n^{-1}&\mathbf{0}\\\mathbf{0}&A_n^{-1}\end{array}\right) \left(\begin{array}{cc}z^2 \mathbf{1}-z B_n&\mathbf{1} \\ z^2(\mathbf{1}-A_nA_n^*)-zB_n& \mathbf{1}\end{array}\right).
\end{equation}

Since $u=g_n$ if $A_k=\mathbf{1}, B_k=\mathbf{0}$ for $k\ge n+1$, it is straightforward to see the following theorem holds.

\begin{theorem}\label{p3_th1}
Let $A_kA_k^*-\mathbf{1}=B_k=\mathbf{0}$ for $k\ge n+1$ (i.e., $\calJ=\widetilde{\calJ}_n$), then $u(z;\calJ)$ is a polynomial. Moreover:
\begin{itemize}
\item if $A_nA_n^*\ne \mathbf{1}$, then $\deg(u)=2n$;
\item if $A_nA_n^*=\mathbf{1}$, but $B_n\ne\mathbf{0}$, then $\deg(u)=2n-1$.
\end{itemize}
\begin{proof}
 By~\eqref{p3_eq6}, $u(z;\calJ)=g_n(z)$, and then \eqref{p3_gc2} gives $$u(z;\calJ)=A_n^{-1} \left[\left(z^2 \left(\mathbf{1}-A_{n}A_{n}^*\right)-z B_{n}\right) c_{n-1}(z)+g_{n-1}(z)\right].$$
 Since $\deg g_{k}\le 2k$ and $\deg c_k=2k$, we obtain each statement of the theorem by induction.
\end{proof}
\end{theorem}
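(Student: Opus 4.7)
The plan is to combine the identity $u(z;\calJ)=g_n(z)$ coming from \eqref{p3_eq6} with an induction based on the Geronimo--Case equations \eqref{p3_gc1}--\eqref{p3_gc2}. Since $\calJ=\widetilde{\calJ}_n$, \eqref{p3_eq6} gives $u(z;\calJ)=g_n(z)$; the definition \eqref{p3_eq4} shows that $c_k(z)=z^k\mathfrak{p}_k^L(z+z^{-1})$ is a polynomial in $z$ of degree at most $2k$, and then \eqref{p3_eq5} shows the same for $g_n$. This settles the first assertion of the theorem.

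For the degree claims I would first prove, by induction on $k$, the auxiliary statement that $c_k$ has exact degree $2k$ with \emph{invertible} leading coefficient $K_k$, while $\deg g_k\le 2k$. The base $k=0$ is trivial since $c_0=g_0=\mathbf{1}$. In the inductive step, rearranging \eqref{p3_gc1} gives
\begin{equation*}
c_{k+1}(z)=A_{k+1}^{-1}\bigl[(z^2\mathbf{1}-zB_{k+1})c_k(z)+g_k(z)\bigr],
\end{equation*}
whose $z^{2k+2}$ coefficient is $A_{k+1}^{-1}K_k$, an invertible matrix; simultaneously \eqref{p3_gc2} yields $\deg g_{k+1}\le 2k+2$, completing the induction.

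With this auxiliary claim in hand, the two degree assertions follow by applying \eqref{p3_gc2} at index $n$. Since $\deg g_{n-1}\le 2n-2$, the top-degree behavior of $A_n g_n(z)$ comes entirely from $(z^2(\mathbf{1}-A_nA_n^*)-zB_n)c_{n-1}(z)$. Its $z^{2n}$ coefficient is $(\mathbf{1}-A_nA_n^*)K_{n-1}$, which is nonzero iff $A_nA_n^*\ne\mathbf{1}$ (using invertibility of $K_{n-1}$); if that $z^{2n}$ term vanishes, the $z^{2n-1}$ coefficient reduces to $-B_nK_{n-1}$, nonzero iff $B_n\ne\mathbf{0}$. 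Left-multiplying by the invertible $A_n^{-1}$ preserves these non-vanishings, giving $\deg u=2n$ and $\deg u=2n-1$ in the two respective cases. The only subtlety specific to the matrix-valued setting, which is absent in the scalar case, is that matrix factors could \emph{a priori} annihilate nonzero leading coefficients; invertibility of $K_{n-1}$ is exactly what rules this out, so beyond the bookkeeping of the induction I expect no real obstacle.
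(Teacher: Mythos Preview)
Your proof is correct and follows essentially the same approach as the paper: both use $u(z;\calJ)=g_n(z)$ from \eqref{p3_eq6}, apply the Geronimo--Case equation \eqref{p3_gc2} at index $n$, and invoke $\deg g_k\le 2k$, $\deg c_k=2k$ by induction. You spell out one point the paper leaves implicit, namely that the leading coefficient $K_k$ of $c_k$ is invertible; this is exactly what is needed in the matrix-valued setting to conclude that $(\mathbf{1}-A_nA_n^*)K_{n-1}\ne\mathbf{0}$ and $-B_nK_{n-1}\ne\mathbf{0}$ in the respective cases.
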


\medskip

\subsubsection{The general case}

Just as in \cite{DS2}, we will be making one of the three successively stronger hypotheses on the Jacobi coefficients:
$$
\sum_{n=1}^\infty \left[ ||B_n||+||\mathbf{1}-A_nA_n^*||\right]<\infty \eqno (A1)
$$
$$
\sum_{n=1}^\infty n\left[ ||B_n||+||\mathbf{1}-A_nA_n^*||\right]<\infty \eqno (A2)
$$
$$
||B_n||+||\mathbf{1}-A_nA_n^*|| \le C R^{-2n} \quad \mbox{for some } R>1 \eqno (A3)
$$
and study properties of the Jost function for each case.

Note that we have the following:
\begin{lemma}\label{p3_lm4}
If the Jacobi parameters satisfy (A1), and $\calJ$ is of type asymptotic to $1$,
then the product $\stackrel{\curvearrowright}{\prod}_{n=1}^\infty A_n$ converges, and the limit is an invertible matrix. Moreover, $\prod_{n=1}^\infty ||A_n^{-1}||<\infty$ and $\prod_{n=1}^\infty ||A_n||<\infty$, and the products converge absolutely.
\begin{proof}
Assume $\calJ$ is of type $1$, i.e., $A_n=A_n^*>0$. Then $\prod_{n=1}^\infty ||A_n^{-1}||<\infty$ follows from
\begin{equation}\label{p3_eq95}
\begin{aligned}
\sum_{n=1}^\infty |1-||A_n^{-1}||| & \le \sum_{n=1}^\infty ||\mathbf{1}-A_n^{-1}||\le \sum_{n=1}^\infty ||A_n^{-1}||\,||\mathbf{1}-A_n||\\
&\le \sup_j||A_j^{-1}|| \sum_{n=1}^\infty ||\mathbf{1}-A_n^2|| \, ||(\mathbf{1}+A_n)^{-1}|| \\
&\le c \sum_{n=1}^\infty ||\mathbf{1}-A_n^2||<\infty,
\end{aligned}
\end{equation}
where we can bound $||A_n^{-1}||$ and $||(\mathbf{1}+A_n)^{-1}||$ uniformly since $\calJ$ is in the Nevai class, so $A_n\to\mathbf{1}$, so $(\mathbf{1}+A_n)^{-1}\to\frac12 \mathbf{1}$.

The bound for $\sum_{n=1}^\infty |1-\norm{A_n}|$ is analogous.

Note that we also showed that $\sum_{n=1}^\infty ||\bdone-A_n||<\infty$. It is proven in \cite{Trench} that given this, the limit  $\stackrel{\curvearrowright}{\prod}_{n=1}^\infty A_n$ exists and is invertible.

Now let $\widetilde{\calJ}$ be any matrix satisfying (A1) asymptotic to type $1$, satisfying \eqref{p1_eq2.3}. Then $$\stackrel{\curvearrowright}{\prod}_{n=1}^N \widetilde{A}_n=\stackrel{\curvearrowright}{\prod}_{n=1}^N A_n \sigma_{N+1}$$ also has an invertible limit.
\end{proof}
\end{lemma}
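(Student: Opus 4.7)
The plan is to first reduce the statement to the case where $\calJ$ itself is of type $1$, and then to handle that case by a direct norm computation together with the classical criterion (due to Trench) for convergence of infinite matrix products. For the reduction, if $\calJ$ is asymptotic to a type-$1$ matrix $\calJ'$ with parameters $\{A_n'\}$ and intertwining unitaries $\sigma_n$ satisfying $\sigma_1 = \bdone$ and $\sigma_\infty := \lim_n \sigma_n$ existing (as in \eqref{p1_eq2.3}), then telescoping gives
\begin{equation*}
\stackrel{\curvearrowright}{\prod}_{n=1}^N A_n = \Bigl(\stackrel{\curvearrowright}{\prod}_{n=1}^N A_n'\Bigr)\sigma_{N+1},
\end{equation*}
so convergence of the right-ordered matrix product to an invertible limit transfers from $\calJ'$ to $\calJ$. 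Since $\norm{A_n}=\norm{A_n'}$ and $\norm{A_n^{-1}}=\norm{(A_n')^{-1}}$ by unitary invariance of the operator norm, the statements about $\prod_n\norm{A_n}$ and $\prod_n\norm{A_n^{-1}}$ also transfer verbatim.

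Under the type-$1$ hypothesis each $A_n = A_n^*$ is positive, and (A1) gives $A_nA_n^*\to\bdone$ together with $B_n\to\bdnot$, placing $\calJ$ in the Nevai class. Lemma \ref{p1_thm1} then yields $A_n\to\bdone$, whence $\norm{A_n^{-1}}$ and $\norm{(\bdone+A_n)^{-1}}$ are uniformly bounded in $n$. The key inequality I would exploit is
\begin{equation*}
\norm{\bdone - A_n} \le \norm{(\bdone+A_n)^{-1}}\,\norm{\bdone - A_n^2} = \norm{(\bdone+A_n)^{-1}}\,\norm{\bdone - A_nA_n^*},
\end{equation*}
obtained from the factorization $\bdone - A_n^2 = (\bdone+A_n)(\bdone - A_n)$ available by self-adjointness. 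Summing in $n$ and invoking (A1) yields $\sum_n\norm{\bdone-A_n}<\infty$, and then the identity $A_n^{-1}-\bdone = A_n^{-1}(\bdone-A_n)$ gives $\sum_n\norm{\bdone-A_n^{-1}}<\infty$. Applying $\abs{1-\norm{T}}\le\norm{\bdone-T}$ with $T=A_n$ and $T=A_n^{-1}$ yields absolute convergence of the scalar products $\prod_n\norm{A_n}$ and $\prod_n\norm{A_n^{-1}}$.

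Convergence of the ordered matrix product to an invertible limit then follows from $\sum_n\norm{\bdone-A_n}<\infty$ together with $\sum_n\norm{\bdone-A_n^{-1}}<\infty$. Setting $P_N := \stackrel{\curvearrowright}{\prod}_{n=1}^N A_n$, the estimate $\norm{P_{N+1}-P_N}\le \norm{P_N}\,\norm{\bdone-A_{N+1}}$, combined with the uniform boundedness of $\norm{P_N}$ (from $\prod_n\norm{A_n}<\infty$), makes $\{P_N\}$ Cauchy in operator norm; the analogous argument applied to the left-ordered products of $A_n^{-1}$ produces a Cauchy sequence whose limit must be the inverse of $\lim_N P_N$, forcing invertibility of the limit. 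Alternatively one can deduce invertibility from $\det P_N = \prod_{n\le N}\det A_n$ converging to a nonzero number, using that $\det$ is Lipschitz near $\bdone$. I do not anticipate a substantive obstacle: the only real care is keeping the order of multiplication straight when passing between the asymptotic-to-type-$1$ and type-$1$ forms, which is why I would write the telescoping step explicitly rather than appealing to it implicitly.
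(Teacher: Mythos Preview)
Your proposal is correct and follows essentially the same route as the paper: reduce to type $1$ via the telescoping identity $\stackrel{\curvearrowright}{\prod}_{n=1}^N \widetilde{A}_n = \bigl(\stackrel{\curvearrowright}{\prod}_{n=1}^N A_n\bigr)\sigma_{N+1}$, use the Nevai class to get $A_n\to\bdone$, factor $\bdone-A_n^2=(\bdone+A_n)(\bdone-A_n)$ to obtain $\sum_n\norm{\bdone-A_n}<\infty$, and deduce the scalar-product and matrix-product statements. The only cosmetic difference is that the paper cites \cite{Trench} for the invertible-limit conclusion where you supply a direct Cauchy argument.
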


Define $g_n$ and $c_n$ by~\eqref{p3_gc1} and \eqref{p3_gc2} with the initial conditions $g_0(z)=c_0(z)=\mathbf{1}$.

\begin{lemma}\label{p3_lm5} Assume $\calJ$ is of type $1$.
\begin{itemize}
\item[(i)] Let $(A1)$ hold. Then uniformly on compacts $K$ of $\overline{\bbD} \setminus \{\pm 1\}\equiv \bbE$,
\begin{equation}\label{p3_eq10}
\sup_{n\in\bbN, z\in K}||c_n(z)||+||g_n(z)|| < \infty.
\end{equation}
\item[(ii)] Let $(A2)$ hold. Then
\begin{align}\label{p3_eq101}
\sup_{n\in\bbN, z\in \overline{\bbD}}||g_n(z)|| < \infty,
\\ \label{p3_eq102}
\sup_{n\in\bbN, z\in \overline{\bbD}}\frac{||c_n(z)||}{1+n} < \infty.
\end{align}
\item[(iii)] Let $(A3)$ hold.
Let $K$ be  any compact subset of $z\in\{z\mid |z|<R\}\equiv \bbD_R$ with $r=\sup_{z\in K} |z|>1$. There exists some constant $C$ such that for all $z\in K$
\begin{equation}\label{p3_eq11}
||c_n(z)||+||g_n(z)||\le C\left[\max(1,r)\right]^{2n}.
\end{equation}
\end{itemize}
\medskip
\noindent In each of these cases the limit
\begin{equation*}
g_\infty(z)=\lim_{n\to\infty} g_n(z)
\end{equation*}
exists, uniformly on compacts of the corresponding region: $\bbE$ for $(A1)$, $\overline{\bbD}$ for $(A2)$, and $\bbD_R$ for $(A3)$. $g_\infty$ is continuous there, and analytic in the interior.
%
\begin{proof}
(i) Define the norm $\left|\left|\left(\begin{array}{c}A\\B\end{array}\right)\right|\right|_{2l\times l}=||A||+||B||$ for any $l\times l$ matrices $A,B$. For any $2l\times 2l$ matrix $V$ let $||V||_{in}$   be the corresponding induced operator norm. Taking~(\ref{p3_eq8}) into account, the estimates~(\ref{p3_eq10}) and~(\ref{p3_eq11}) will be proved if we show the corresponding results for $||V_n(z)\ldots V_1(z)||_{in}$. Observe that for $z\ne\pm 1$,
$$
\left(\begin{array}{cc}z^2 \mathbf{1}&\mathbf{1}\\\mathbf{0}&\mathbf{1}\end{array}\right) =L(z) \left(\begin{array}{cc}z^2 \mathbf{1}&\mathbf{0}\\\mathbf{0}&\mathbf{1}\end{array}\right) L(z)^{-1}, $$
where
$$
L(z)=\left(\begin{array}{cc}\mathbf{1}&\frac{1}{1-z^2}\mathbf{1}\\\mathbf{0}&\mathbf{1}\end{array}\right) ,\quad L(z)^{-1}= \left(\begin{array}{cc}\mathbf{1}&-\frac{1}{1-z^2}\mathbf{1}\\\mathbf{0}&\mathbf{1}\end{array}\right).$$
So denoting
$$
F_n=L(z)^{-1}
\left(\begin{array}{cc}-z B_n&\mathbf{0} \\ z^2(\mathbf{1}-A_nA_n^*)-zB_n& \mathbf{0}\end{array}\right)
L(z),
$$
we obtain from~(\ref{p3_eq9}),
\begin{multline*}
V_n=\left(\begin{array}{cc}A_n^{-1}&\mathbf{0}\\\mathbf{0}&A_n^{-1}\end{array}\right) L(z) \left[\left(\begin{array}{cc}z^2\mathbf{1}&\mathbf{0}\\\mathbf{0}&\mathbf{1}\end{array}\right)+F_n\right] L(z)^{-1} \\=L(z)
\left(\begin{array}{cc}A_n^{-1}&\mathbf{0}\\\mathbf{0}&A_n^{-1}\end{array}\right) \left[\left(\begin{array}{cc}z^2\mathbf{1}&\mathbf{0}\\\mathbf{0}&\mathbf{1}\end{array}\right)+F_n\right] L(z)^{-1}
\end{multline*}
since $L(z)$ and $\left(\begin{array}{cc}A_n^{-1}&\mathbf{0}\\\mathbf{0}&A_n^{-1}\end{array}\right)$ commute.

Then we get that for any $z$, $z\ne\pm1$,
\begin{multline}\label{p3_eq12}
\norm{V_n\ldots V_1 }_{in}
\le \norm{L(z)}_{in} \, \norm{L(z)^{-1}}_{in}  \left[\max(1,|z|)\right]^{2n} \\
\times \prod_{j=1}^n ||A_j^{-1}||  \prod_{j=1}^n\left(1+\norm{L(z)}_{in} \, \norm{L(z)^{-1}}_{in} \left(||B_j||+||\mathbf{1}-A_jA_j^*||\right)\right). \end{multline}
By Lemma~\ref{p3_lm4}, we can bound $\prod_{j=1}^n ||A_j^{-1}||$.
\smallskip

For any compact $K$ of $\bbE$, $\sup_{z\in K} \norm{L(z)}_{in} \, \norm{L(z)^{-1}}_{in}<\infty$, so taking supremum in~\eqref{p3_eq12} over $z\in K$ and using (A1) we obtain
\begin{equation*}
\sup_{n\in\bbN, z\in K}||c_n(z)||+||g_n(z)|| = M< \infty
\end{equation*}
for some constant $M$.

\smallskip

(ii) Note that  by Lemma~\ref{p3_lm3}(i), we have
\begin{equation*}
\sup_{\Lambda\subset\bbN} \prod_{j\in\Lambda} ||A_j^{-1}||=p<\infty.
\end{equation*}

Let us show inductively that
\begin{equation*}
||g_n(z)|| \le \prod_{j=1}^n ||A_j^{-1}||\, \prod_{j=1}^n \left[1+j ( ||B_j||+||\mathbf{1}-A_jA_j^*||)\right]
\end{equation*}
and
\begin{equation*}
||c_n(z)|| \le (n+1)\,\prod_{j=1}^n ||A_j^{-1}|| \, \prod_{j=1}^n \left[1+j ( ||B_j||+||\mathbf{1}-A_jA_j^*||)\right].
\end{equation*}

For $n=0$ the inequalities are trivial. Now, if these inequalities hold for $n$ then using~\eqref{p3_gc1} and \eqref{p3_gc2}:
\begin{multline*}
||g_{n+1}(z)||  \le ||A_{n+1}^{-1}||\left[ (n+1) (||B_{n+1}||+||\mathbf{1}-A_{n+1}A_{n+1}^*||)+1 \right] \\
\times \prod_{j=1}^n ||A_j^{-1}||\, \prod_{j=1}^n \left[1+j ( ||B_j||+||\mathbf{1}-A_jA_j^*||)\right]
\end{multline*}
and
\begin{equation*}
\begin{aligned}
||c_{n+1}(z)|| &\le ||A_{n+1}^{-1}|| \left[ (n+1) (1+||B_{n+1}||)
+1 \right]
\\ & \qquad\quad\qquad \times \prod_{j=1}^n ||A_j^{-1}||\, \prod_{j=1}^n \left[1+j ( ||B_j||+||\mathbf{1}-A_jA_j^*||)\right] \\
&\le (n+2)\,\prod_{j=1}^{n+1} ||A_j^{-1}|| \, \prod_{j=1}^{n+1} \left[1+j ( ||B_j||+||\mathbf{1}-A_jA_j^*||)\right].
\end{aligned}
\end{equation*}
By Lemma~\ref{p3_lm4}, $\prod_{n=1}^\infty ||A_n^{-1}||$ is absolutely convergent, so $(A2)$ implies \eqref{p3_eq101} and \eqref{p3_eq102}.

\smallskip

(iii)  Since $||g_n||$ and $||c_n||$ are subharmonic functions, by the maximum principle we need to prove the estimate~\eqref{p3_eq11} for the circle $|z|=r$. This follows immediately from~\eqref{p3_eq12}. Note that this property does not really require $(A3)$, just $(A1)$ (the existence of the limit however will).

\medskip

Now to show the convergence of $g_n$, note that by~(\ref{p3_gc2}),
\begin{multline}\label{p3_eq14}
||g_{n+1}(z)-g_n(z)||=||A_{n+1}^{-1}\left(z^2 \left(\mathbf{1}-A_{n+1}A_{n+1}^*\right)-z B_{n+1}\right) c_n(z)
\\
+\left(A_{n+1}^{-1}-\mathbf{1}\right)g_n(z)|| \\ \le \left[\sup_j||A_j^{-1}|| \, \left[\max(1,r)\right]^{2}\, \left( ||B_n||+||\mathbf{1}-A_nA_n^*||\right) + ||\mathbf{1}-A_{n+1}^{-1}||\right] \\
\times \sup_{n\in\bbN, z\in K}\left(||c_n(z)||+||g_n(z)||\right).
\end{multline}
Since we are in the type 1 situation, we can use the same reasoning as in~(\ref{p3_eq95}) to get $||\mathbf{1}-A_{n+1}^{-1}||\le c \norm{\mathbf{1}-A_{n+1}A_{n+1}^*}$, and then~(\ref{p3_eq14}), together with the estimates in (i), (ii), and (iii), gives
$\sum_{n=0}^\infty ||g_{n+1}(z)-g_n(z)||<\infty$ uniformly on compacts of $\bbE$, $\overline{\bbD}$, $\bbD_R$, respectively. This proves the existence and analyticity/continuity properties of $g_\infty$.
\end{proof}
\end{lemma}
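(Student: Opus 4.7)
The natural strategy is to exploit the transfer-matrix formulation \eqref{p3_eq8}--\eqref{p3_eq9} by isolating the ``free'' part of $V_n$, which for $A_n=\bdone$, $B_n=\bdnot$ reduces to $\bigl(\begin{smallmatrix}z^2\bdone&\bdone\\\bdnot&\bdone\end{smallmatrix}\bigr)$. Away from $z=\pm1$, this free matrix is similar to $\diag(z^2\bdone,\bdone)$ via $L(z)=\bigl(\begin{smallmatrix}\bdone&(1-z^2)^{-1}\bdone\\\bdnot&\bdone\end{smallmatrix}\bigr)$, and the diagonal form has operator norm $\max(1,|z|^2)$, which is exactly the rate appearing in (i) and (iii). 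Since $\diag(A_n^{-1},A_n^{-1})$ commutes with $L(z)$, one can factor $V_n=L(z)\diag(A_n^{-1},A_n^{-1})\bigl[\diag(z^2\bdone,\bdone)+F_n\bigr]L(z)^{-1}$, with $F_n$ absorbing the perturbation. Telescoping $V_n\cdots V_1$ then yields
\[
\|V_n\cdots V_1\|_{in}\le \|L\|_{in}\|L^{-1}\|_{in}[\max(1,|z|)]^{2n}\prod_{j=1}^n\|A_j^{-1}\|\prod_{j=1}^n\bigl(1+\|L\|_{in}\|L^{-1}\|_{in}(\|B_j\|+\|\bdone-A_jA_j^*\|)\bigr).
\]

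This estimate immediately handles (i) and (iii): for (i), Lemma \ref{p3_lm4} bounds $\prod_j\|A_j^{-1}\|$ and under (A1) the outer product is bounded uniformly on any compact $K\subset\bbE$ where $\|L(z)\|_{in}\|L(z)^{-1}\|_{in}$ stays finite; for (iii), the same estimate on $|z|=r$ combined with subharmonicity of $\|c_n\|,\|g_n\|$ (maximum principle on $\bbD_R$) gives \eqref{p3_eq11}.

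The main obstacle is (ii): $\|L(z)\|_{in}$ blows up at $z=\pm1\in\overline{\bbD}$, so the diagonalization estimate is useless there. The way out is to avoid $L(z)$ entirely and argue by direct induction on \eqref{p3_gc1}--\eqref{p3_gc2} using only $|z|\le 1$: one shows $\|g_n(z)\|\le\prod_{j=1}^n\|A_j^{-1}\|\prod_{j=1}^n[1+j(\|B_j\|+\|\bdone-A_jA_j^*\|)]$ and $\|c_n(z)\|\le(n+1)$ times the same product. The extra factor $j$ arises because $\|c_j\|\le(j+1)\cdot(\cdots)$ feeds into the recursion, forcing the linear growth in \eqref{p3_eq102}; under (A2), $\sum_j j(\|B_j\|+\|\bdone-A_jA_j^*\|)<\infty$, and Lemma \ref{p3_lm3}(i) (together with Lemma \ref{p3_lm4}) controls the $\|A_j^{-1}\|$-product, yielding \eqref{p3_eq101}--\eqref{p3_eq102}.

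For convergence of $g_n$, equation \eqref{p3_gc2} gives $g_{n+1}-g_n=A_{n+1}^{-1}[z^2(\bdone-A_{n+1}A_{n+1}^*)-zB_{n+1}]c_n+(A_{n+1}^{-1}-\bdone)g_n$. Because $\calJ$ is of type $1$ (so $A_n=A_n^*>0$), the factorization $\bdone-A_n^2=(\bdone-A_n)(\bdone+A_n)$ together with the uniform boundedness of $\|(\bdone+A_n)^{-1}\|$ (from $A_n\to\bdone$) gives $\|\bdone-A_{n+1}^{-1}\|\le C\|\bdone-A_{n+1}A_{n+1}^*\|$. Combined with the bounds from (i)/(ii)/(iii), this makes $\sum_n\|g_{n+1}-g_n\|$ converge uniformly on compacts of $\bbE$, $\overline{\bbD}$, and $\bbD_R$ respectively, proving existence and continuity of $g_\infty$; analyticity in the interior then follows from Weierstrass's theorem.
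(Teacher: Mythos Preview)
Your proposal is correct and follows essentially the same approach as the paper: the diagonalization of the free transfer matrix via $L(z)$ for (i) and (iii), the direct induction on the Geronimo--Case recursions for (ii) to avoid the blow-up of $L(z)$ at $\pm1$, the subharmonicity/maximum-principle reduction in (iii), and the type-1 bound $\|\bdone-A_{n+1}^{-1}\|\le C\|\bdone-A_{n+1}A_{n+1}^*\|$ in the convergence step are all exactly what the paper does.
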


As a consequence we obtain Szeg\H{o} asymptotics of the orthonormal polynomials in the unit disk (compare with \cite{K_sz}).
\begin{theorem} \label{p3_th2}
Assume $(A1)$ holds, i.e., $\sum_{n=1}^\infty \left[ ||B_n||+||\mathbf{1}-A_nA_n^*||\right]<\infty$, and let $\calJ$ be of type $1$. Then uniformly on compacts of $\bbD$ the limit
\begin{equation}\label{p3_eq3.26}
\lim_{n\to\infty} z^n \mathfrak{p}_n^L\left(z+z^{-1}\right)
\end{equation}
exists, and is equal to $\frac{1}{1-z^2}\,g_\infty(z)$.
\begin{proof}
Note that by Lemma~\ref{p3_lm4}, $\prod_{n=1}^\infty ||A_n^{-1}||$ is absolutely convergent, so by Lemma \ref{p3_lm3}(i), we have
\begin{equation*}
\sup_{\Lambda\subset\bbN} \prod_{j\in\Lambda} ||A_j^{-1}||=p<\infty.
\end{equation*}

Let $K$ be any compact of $\bbD$, and $M=\sup_{n\in\bbN, z\in K}||c_n(z)||+||g_n(z)||$. By the Geronimo--Case equations,
\begin{equation*}
||c_n-A_{n}^{-1} g_{n-1}-z^2 A_n^{-1} c_{n-1}||\le M \norm{A_n^{-1}} \cdot \norm {B_{n}}\le Mp \, \norm{B_n}.
\end{equation*}
Repeating this, we get
\begin{equation*}\label{p3_eq2.30}
\begin{aligned}
||c_n-A_{n}^{-1} g_{n-1}-&z^2 A_n^{-1} A_{n-1}^{-1} g_{n-2}- z^4 A_n^{-1}A_{n-1}^{-1} c_{n-2}|| \\
&\le M p \,  \norm {B_{n}}+|z|^2 M \norm{A_n^{-1}} \cdot \norm{A_{n-1}^{-1}} \cdot \norm {B_{n-1}} \\ &\le M p \, \norm {B_{n}}+|z|^2 M p \, \norm {B_{n-1}}.
\end{aligned}
\end{equation*}
Iterating it further, we get
\begin{equation}\label{p3_eq2.31}
||c_n-f_n|| \le M p \sum_{j=1}^n |z|^{2(n-j) }\norm {B_{j}},
\end{equation}
where
\begin{multline*}
f_n=A_n^{-1}g_{n-1}+ z^2 A_n^{-1} A_{n-1}^{-1} g_{n-2}+\ldots+z^{2(n-1)} A_n^{-1} A_{n-1}^{-1}\ldots A_1^{-1} g_0 \\
+ z^{2n} A_n^{-1}A_{n-1}^{-1}\ldots A_1^{-1} c_{0}.
\end{multline*}
By Lemma~\ref{p3_lm3}(ii) the right-hand side of~\eqref{p3_eq2.31} goes to zero
. Finally, note that
\begin{equation}\label{p3_eq2.33}
\begin{aligned}
& \left|\left|\stackrel{\curvearrowright}{\prod_{k=n+1}^\infty}  A_{k} \, g_\infty \frac{1-z^{2n}}{1-z^2}-f_n\right|\right| \\
&\le p \sum_{j=0}^{n-1} |z|^{2(n-1-j)} \left|\left|\stackrel{\curvearrowright}{\prod_{k=n+1}^\infty} A_{k} \, g_\infty-A_n^{-1} A_{n-1}^{-1}\ldots A_{j+1}^{-1} g_j\right|\right| \\
&\le
p^2 \sum_{j=0}^{n-1} |z|^{2(n-1-j)} \left|\left|\stackrel{\curvearrowright}{\prod_{k=1}^\infty} A_{k} \, g_\infty-A_{1}\ldots A_{j} g_j\right|\right|.
\end{aligned}
\end{equation}
By Lemma~\ref{p3_lm4}, the product $\stackrel{\curvearrowright}{\prod_{k=1}^\infty} A_{k}$ converges, and by Lemma~\ref{p3_lm3}(ii) the right-hand side of \eqref{p3_eq2.33} goes to zero. Easy to see that the convergence in \eqref{p3_eq2.31} and \eqref{p3_eq2.33} is actually uniform. Thus we established $\lim_{n\to\infty} c_n=\frac{1}{1-z^2} g_\infty$.
\end{proof}
\end{theorem}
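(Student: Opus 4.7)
My plan is to exploit the first Geronimo--Case equation \eqref{p3_gc1} to iterate $c_n$ backwards into a sum of terms built from $g_k$'s and products of $A_j^{-1}$, where the contribution of the off-diagonal $zB_{n+1}c_n$ piece becomes an error term controllable by the summability of $\|B_j\|$. The uniform bounds $\sup_n(\|c_n(z)\|+\|g_n(z)\|)<\infty$ on compacts $K\subset\bbD\subset\bbE$ from Lemma \ref{p3_lm5}(i), together with the uniform bound on finite products of $\|A_j^{-1}\|$ coming from Lemma \ref{p3_lm4} combined with Lemma \ref{p3_lm3}(i), will let us treat these error terms cleanly.

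Concretely, rewriting \eqref{p3_gc1} as $c_{n+1}=A_{n+1}^{-1}g_n+z^2A_{n+1}^{-1}c_n-zA_{n+1}^{-1}B_{n+1}c_n$ and iterating, I expect
\[
c_n = f_n + E_n,\qquad f_n=\sum_{k=0}^{n-1} z^{2k}\bigl(A_n^{-1}\cdots A_{n-k}^{-1}\bigr)g_{n-1-k}+z^{2n}\bigl(A_n^{-1}\cdots A_1^{-1}\bigr)c_0,
\]
with an error bound $\|E_n\|\le Cp\sum_{j=1}^n |z|^{2(n-j)}\|B_j\|$ where $C,p$ come from the uniform bounds above. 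For $z\in K\subset\bbD$ the sequence $a_j:=|z|^{2j}\to 0$ uniformly and $b_j:=\|B_j\|$ is summable, so Lemma \ref{p3_lm3}(ii) (in its matrix-valued form) gives $E_n\to 0$ uniformly on $K$.

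To identify the limit of $f_n$, I will compare it to
\[
T_n(z):=\Bigl(\stackrel{\curvearrowright}{\prod_{k=n+1}^\infty} A_k\Bigr)\,g_\infty(z)\,\frac{1-z^{2n}}{1-z^2}.
\]
The tail product tends to $\mathbf{1}$ by Lemma \ref{p3_lm4}, and $\frac{1-z^{2n}}{1-z^2}\to \frac{1}{1-z^2}$ uniformly on $K$, so $T_n\to g_\infty/(1-z^2)$. It then remains to show $\|f_n-T_n\|\to 0$. Expanding the difference term-by-term, each summand is of the form $z^{2k}\bigl[(A_n^{-1}\cdots A_{n-k}^{-1})g_{n-1-k}-\bigl(\stackrel{\curvearrowright}{\prod_{j=1}^\infty}A_j\bigr)g_\infty \cdot (\text{partial product correction})\bigr]$, which after regrouping becomes a convolution of $|z|^{2(n-1-j)}$ against the sequence $\|\stackrel{\curvearrowright}{\prod_{k=1}^\infty} A_k \cdot g_\infty - (A_1\cdots A_j)g_j\|\to 0$. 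A second application of Lemma \ref{p3_lm3}(ii), now with the roles swapped (the $|z|^{2j}$ providing the summable sequence and the product-comparison providing the sequence tending to zero), finishes the job.

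The main obstacle I anticipate is careful bookkeeping of the non-commutative matrix products: one must distinguish the ordered tails $A_n^{-1}A_{n-1}^{-1}\cdots A_{j+1}^{-1}$ that arise from iteration from the head products $A_1 A_2\cdots A_j$ whose limit is controlled by Lemma \ref{p3_lm4}, and reconcile them uniformly in both $n$ and $j$ so that Lemma \ref{p3_lm3}(ii) actually applies. Once the ordering is consistent, everything reduces to two discrete convolution estimates plus the two convergent products.
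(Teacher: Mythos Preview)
Your proposal is correct and follows essentially the same route as the paper's own proof: iterate the first Geronimo--Case equation to write $c_n=f_n+E_n$ with the explicit $f_n$ you display, bound $\|E_n\|$ by the discrete convolution $Mp\sum_{j=1}^n |z|^{2(n-j)}\|B_j\|$ and kill it with Lemma~\ref{p3_lm3}(ii), then compare $f_n$ to $T_n=\bigl(\stackrel{\curvearrowright}{\prod_{k=n+1}^\infty}A_k\bigr)g_\infty\frac{1-z^{2n}}{1-z^2}$ and kill that difference with a second application of Lemma~\ref{p3_lm3}(ii) after the product-reconciliation you flag. The paper carries out exactly this argument, including the factorization $A_n^{-1}\cdots A_{j+1}^{-1}=(A_n^{-1}\cdots A_1^{-1})(A_1\cdots A_j)$ that converts tail products into head products at the cost of one extra factor of $p$, which is precisely the bookkeeping step you anticipated.
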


\begin{remark}
Another way of showing this is to use the analogous arguments to~\cite[Lemma 3.7.5]{Rice} to show that Szeg\H{o} asymptotics (i.e., \eqref{p3_eq3.26}) at $z\in\bbD$ holds  if and only if the Jost asymptotics does (i.e., \eqref{p3_eq21}), so that Theorem~\ref{p3_th5} implies Theorem~\ref{p3_th2}.
\end{remark}

%
%

Denote the limit function $g_\infty(z)$ of Lemma~\ref{p3_lm5} as $u(z;\calJ)$ and call it the Jost function (in Theorem \ref{p3_th5} below we will show that this indeed agrees with our earlier Definition \ref{def3}). 
Lemma~\ref{p3_lm5} establishes the existence of the Jost function for the type $1$ situation only. The next theorem says that the Jost function exists if and only if the Jacobi matrix is asymptotic to type $1$. Note that by Lemma \ref{p1_thm2}, type 2 and type 3 are asymptotic to type 1 under the condition (A1).

\begin{theorem}\label{p3_th3}
Let $\calJ$ with Jacobi parameters $(A_n)_{n=1}^\infty$, $(B_n)_{n=1}^\infty$ be of type $1$ and satisfy $(A1)$. Let $\widetilde{\calJ}$ with Jacobi parameters $(\widetilde{A}_n)_{n=1}^\infty$, $(\widetilde{B}_n)_{n=1}^\infty$ be equivalent to $\calJ$, i.e,
\begin{align*}
\widetilde{A}_n=\sigma_n^* A_n \sigma_{n+1},\\
\widetilde{B}_n=\sigma_n^* B_n \sigma_n
\end{align*}
for some unitary $\mathbf{1}=\sigma_1, \sigma_2, \sigma_3,\ldots$
Then the Jost function for $\widetilde{\calJ}$ exists if and only if $\lim_{n\to\infty} \sigma_n$ exists, in which case
\begin{equation*}\label{p3_eq17}
u(z;\widetilde{\calJ})=\lim_{n\to \infty}\sigma_n^* \, u(z;\calJ)\sigma_1.
\end{equation*}
\end{theorem}
\begin{proof}
We prove inductively that $\widetilde{g}_n=\sigma_{n+1}^* g_n\sigma_1$ and $\widetilde{c}_n=\sigma_{n+1}^* c_n\sigma_1$. For $n=0$ this is trivial, and assuming this holds for $n$, we prove it for $n+1$:
\begin{equation*}
\begin{aligned}
\widetilde{g}_{n+1}(z)&=\widetilde{A}_{n+1}^{-1}\left[\widetilde{g}_n(z)+\left(z^2 \left(\mathbf{1}-\widetilde{A}_{n+1}\widetilde{A}_{n+1}^*\right)-z \widetilde{B}_{n+1}\right) \widetilde{c}_n(z)\right]\\
&=\sigma_{n+2}^* A_{n+1}^{-1}\sigma_{n+1}\left[\sigma_{n+1}^* g_n(z) \right.\\
&\qquad\qquad \left.+\left(z^2 \left(\mathbf{1}-\sigma_{n+1}^* A_{n+1}A_{n+1}^*\sigma_{n+1}\right)-z \sigma_{n+1}^* B_{n+1}\sigma_{n+1}\right) \sigma_{n+1}^* c_n(z)\right]\sigma_1\\
&=\sigma_{n+2}^* A_{n+1}^{-1}\left[\left(z^2 \left(\mathbf{1}-A_{n+1}A_{n+1}^*\right)-z B_{n+1}\right) c_n(z)+ g_n(z)\right]\sigma_1 \\
&=\sigma_{n+2}^* g_{n+1}(z)\sigma_1,
\end{aligned}
\end{equation*}
and similarly for $\widetilde{c}_{n+1}=\sigma_{n+2}^* c_{n+1}\sigma_1$. The limit $\lim_{n\to\infty} g_n(z)$ exists by Lemma~\ref{p3_lm5}, so $\lim_{n\to\infty} \widetilde{g}_n(z)$ exists if and only if  the limit $\lim_{n\to\infty} \sigma_n$ exists, in which case $u(z;\widetilde{\calJ})=\lim_{n\to \infty}\sigma_n^* \, u(z;\calJ)\sigma_1$.
\end{proof}

Assume $\calJ$ is a Jacobi matrix asymptotic to type 1, and let its Jacobi parameters satisfy~$(A1)$, $(A2)$, or $(A3)$. Then so do the parameters of $\calJ^{(k)}$ for all $k$, and thus $u(z;\calJ^{(k)})$ exists in $\bbE$, $\overline\bbD$, $\bbD_R$, respectively (which will be called ``the appropriate region'' in what follows). We define the Jost solution (in Theorem \ref{p3_th5} below we will show it is indeed the Jost solution we defined earlier in Definition \ref{def2}) by
\begin{equation}\label{p3_eq19}
u_n(z;\calJ)=z^n u(z;\calJ^{(n)})A_n^{-1}.
\end{equation}

Observe that by (the arguments of) Theorem~{\ref{p3_th3}}, the Jost solutions of equivalent Jacobi matrices are related via
\begin{equation*}
u_k(z;\widetilde{\calJ})=\lim_{n\to \infty}\sigma_n^* \, u_k(z;\calJ)\sigma_k.
\end{equation*}

Recall that $\mathfrak{m}(z)=\int \frac{1}{x-z}d\mu(x)$ and $M(z)=-\mathfrak{m}(z+z^{-1};\calJ)$. For each discrete eigenvalue $E_j$ of $\calJ$ outside $[-2,2]$, let $z_j\in\bbD\cap\bbR$ be such that $z_j+{z_j}^{-1}=E_j$, and denote $\widetilde{w}_j=-\lim_{z\to z_j} (z-z_j) M(z)$, $w_j=\mu(E_j)=-\lim_{E\to E_j} (E-E_j) \mathfrak{m}(E)=(z_j^{-1}-z_j)z_j^{-1} \widetilde{w}_j$ ($w_j,\widetilde{w}_j\ge\bdnot$).

Recall that $f^\sharp(z)=f(\bar{z}^{-1})^*$, and that zeros of a matrix-valued function $f$ are defined to be the poles of its inverse $f^{-1}$. A zero of $f$ is simple if the corresponding pole of $f^{-1}$ is simple.


\begin{theorem}\label{p3_th5}
Assume $\calJ$ is a Jacobi matrix asymptotic to type $1$, and let its Jacobi parameters satisfy $(A1)$, $(A2)$, or $(A3)$.
\begin{itemize}
\item[(i)] $u_n(z;\calJ)$ in the appropriate region ($\bbE$, $\overline\bbD$, $\bbD_R$, resp.) satisfies
\begin{equation}\label{p3_eq20}
u_{n+1}(z;\calJ)A_{n}^*+u_n(z;\calJ)(B_{n}-(z+z^{-1}) \mathbf{1})+u_{n-1}(z;\calJ)A_{n-1}=\mathbf{0}, \quad n=1,2,\ldots.
\end{equation}
\item[(ii)] In the appropriate region,
\begin{equation}\label{p3_eq21}
\lim_{n\to\infty} z^{-n} u_n(z;\calJ)=\mathbf{1}.
\end{equation}
\item[(iii)] For $z\in \D$,
\begin{equation}\label{p3_eq22}
u(z;\calJ^{(1)})=z^{-1}u(z;\calJ) M(z;\calJ) A_1.
\end{equation}
\item[(iv)] The only zeros of $u(z;\calJ)$ in $\D$ are at the real points $z_j$ with $z_j+z_j^{-1}\equiv E_j$ a discrete eigenvalue of $\calJ$. Each zero of $u(z;\calJ)$ in $\D$ is simple
    , and the order of $z_j$ as a zero of $\det u(z;\calJ)$ equals to the multiplicity of $E_j$ as an eigenvalue of $\calJ$. Moreover,
    \begin{equation}\label{p3_eq23}
    \ker u(z_j;\calJ)=\ran w_j=\ran \widetilde{w}_j.
    \end{equation}
\item[(v)] The only zeros of $u(z;\calJ)$ in $\partial\D$ are possible ones at $\pm1$, in which case they are simple.
\item[(vi)] $M(z;\calJ)$ has a continuation from $\D$ to $\overline{\D}\setminus\{\pm1\}$, which is everywhere finite and invertible on $\partial\D\setminus\{\pm1\}$, and
    \begin{equation}\label{p3_eq24}
    \imm M(e^{i\theta})=\sin\theta \left[u(e^{i\theta};\calJ)^* u(e^{i\theta};\calJ)\right]^{-1}.
    \end{equation}
\item[(vii)] The following recurrence holds:
\begin{equation*}
u(z;\calJ^{(2)})
=z^{-1} u(z;\calJ^{(1)}) A_1^{-1}((z+z^{-1})\bdone-B_1) {A_1^*}^{-1} A_2-
z^{-2} u(z;\calJ)  {A_1^*}^{-1} A_2.
\end{equation*}
\end{itemize}
Now assume $(A3)$ holds.
\begin{itemize}
\item[(viii)] $M$ can be extended meromorphically to $\{z\mid |z|<R\}$, and
\begin{equation}\label{p3_eq25}
M(z)=M^\sharp(z)+(z-z^{-1})\left[u^\sharp(z;\calJ)u(z;\calJ)\right]^{-1}, \quad R^{-1}<|z|<R.
\end{equation}
\item[(ix)]  For each $z_j$ with $R^{-1}<|z_j|<1$,
\begin{equation}\label{p3_eq3.42}
 \widetilde{w}_j u(1/{\bar{z}_j};\calJ)^*=-(z_j-z_j^{-1}) \res_{z=z_j} u(z;\calJ)^{-1},
 \end{equation}
in particular,
\begin{equation}\label{p3_eq3.43}
\ker u(1/{\bar{z}_j};\calJ)^*\subseteq \ker \res_{z=z_j} u(z;\calJ)^{-1}=\ran u(z_j;\calJ).
\end{equation}
\end{itemize}
\begin{remarks}
1. Part (vi) shows that if $(A1)$ holds then there is no point spectrum of $\calJ$ in $[-2,2]$.

2. Part (iv), together with analyticity of $u$, shows that under $(A3)$ there exist at most finitely many eigenvalues of $\calJ$ outside of $[-2,2]$. One could show that under $(A2)$ the same result holds (see \cite{Geronimo_matrix}). Under $(A1)$ there may be infinitely many eigenvalues outside of $[-2,2]$ with $-2$ and $2$ as their only accumulation points.

3. Part (vii) shows that if $u(z;\calJ)$ and $u(z;\calJ^{(1)})$ are analytic, then so is $u(z;\calJ^{(n)})$ for any $n$. This is  why the inductive argument for the inverse direction works.
\end{remarks}
\begin{proof}
(i) Note that since $\widetilde{u}(z;\calJ_l)=g_l(z;\calJ)\to u(x;\calJ)$, it suffices to show~\eqref{p3_eq20} for $\calJ\equiv \widetilde{\calJ}_l$.

Let $v_n(z;\widetilde{\calJ}_l)$ be the ``old'' definition of Jost solution, i.e., the solution of~\eqref{p3_eq20} for $\calJ\equiv \widetilde{\calJ}_l$ such that $v_n(z;\widetilde{\calJ}_l)=z^n$ for large $n$. Note that by \eqref{p3_eq6} $v_0(z;\widetilde{\calJ}_l)=g_l(z;\widetilde{\calJ}_l)=\lim_{k\to\infty}g_k(z;\widetilde{\calJ}_l)=u_0(z;\widetilde{\calJ}_l)$, where the middle equality comes from $\eqref{p3_gc2}$.

Since $\calJ^{(k)}$ shifts indices by $k$, and $z^n=z^{-k}(z^{n+k})$, we have for all $n\ge1$ and $k\ge1$, $$v_n\left(z;\left[\widetilde{\calJ}_l\right]^{(k)}\right)=z^{-k} v_{n+k}\left(z;\widetilde{\calJ}_l\right).$$

For $n=0$, the difference equation~\eqref{p3_eq20} then gives
$$v_0\left(z;\left[\widetilde{\calJ}_l\right]^{(k)}\right)=z^{-k} v_{k}\left(z;\widetilde{\calJ}_l\right)A_k,$$ and so
$$
v_{k}\left(z;\widetilde{\calJ}_l\right)=z^k v_0\left(z;\left[\widetilde{\calJ}_l\right]^{(k)}\right) A_k^{-1}=z^k u_0\left(z;\left[\widetilde{\calJ}_l\right]^{(k)}\right) A_k^{-1}\equiv u_{k}\left(z;\widetilde{\calJ}_l\right).
$$

\medskip

(ii) It follows from~(\ref{p3_eq14}) that
\begin{equation}\label{p3_eq26}
\begin{aligned}
||u(z;\calJ^{(n)})-\mathbf{1}|| &\le \sum_{j=0}^\infty ||g_{j+1}(z;\calJ^{(n)})-g_{j}(z;\calJ^{(n)})|| \\
&\le  \sup_{k\in\bbN, z\in K}\left(||c_k(z;\calJ^{(n)})||+||g_k(z;\calJ^{(n)})||\right) \\
& \quad\times \sum_{j=n+1}^\infty \left[ \sup_k||A_k^{-1}|| \left[\max(1,r)\right]^{2}\left( ||B_j||+||\mathbf{1}-A_jA_j^*||\right) + ||\mathbf{1}-A_{j+1}^{-1}||\right].
\end{aligned}
\end{equation}

Now, assuming $\calJ$ is of type 1, we can bound $||\mathbf{1}-A_{j}^{-1}||\le c ||\mathbf{1}-A_{j}A_{j}^*||$, and then Lemma~\ref{p3_lm5} gives the convergence of the right hand side of~(\ref{p3_eq26}).

If $\widetilde{\calJ}$ is of type asymptotic to 1, then by Theorem~\ref{p3_th3} we get $$\lim_{k\to\infty}z^{-k} u_k(z;\widetilde{\calJ})=\lim_{k\to\infty}\lim_{n\to \infty}\sigma_n^* \, z^{-k}u_k(z;\calJ)\sigma_k=\lim_{n\to \infty}\sigma_n^*\lim_{k\to \infty}\sigma_k=\mathbf{1}.$$

\medskip

(iii) By \cite[Thm 2.16(iii)]{DPS}, we get $u_1(z;\calJ)=-u_0(z;\calJ)\mathfrak{m}(z+z^{-1};\calJ)$, hence
\begin{equation*}
u(z;\calJ^{(1)})=z^{-1}u_1(z;\calJ)A_1=z^{-1}u(z;\calJ)M(z;\calJ)A_1.
\end{equation*}

\medskip

(iv) Observe that if $M(z;\calJ)$ is regular at $z$, then $u(z;\calJ)$ is invertible at $z$. Otherwise we can pick an eigenvector $f$ with $f^* u(z;\calJ)=\mathbf{0}$ and see that $f^* u_1(z;\calJ)=f^* u(z;\calJ) M(z;\calJ)=\mathbf{0}$, and then $f^* u_n(z;\calJ)=\mathbf{0}$ for all $n$ from~(\ref{p3_eq20}). This would contradict (ii).

Thus the only possible zeros are at $z_j$'s with $z_j+z_j^{-1}=E_j$ being an eigenvalue of $\calJ$. Let $q_k$ be the multiplicity of $E_j$ as an eigenvalue of $\calJ^{(k)}$. By Lemma \ref{stripping2}, $\sigma(\calJ^{(N)})\subset \left[-2-\epsilon,2+\epsilon\right]$ for sufficiently big $N$, so $q_n=0$ for all $n\ge N$. Since $q_N=0$, $M(z;\calJ^{(N)})$ is regular at $z_j$, and then the arguments above show that $u(z;\calJ^{(N)})$ is invertible at $z_j$. Now let us prove the statement about zeros of the determinant inductively assuming we know it for $N, N-1,\ldots,n+1$. By \cite[Thm 2.28]{DPS}, $\det M(z;\calJ^{(n)})$ has zero of order $q_{n+1}-q_n$ at $z=z_j$, and then~(\ref{p3_eq22}) gives $\det u(z;\calJ^{(n)})=z^n \det u(z;\calJ^{(n+1)})\det M(z;\calJ^{(n)})^{-1}\det A_{n+1}^{-1}$ has zero of order $q_{n+1}-(q_{n+1}-q_n)=q_n$ at $z=z_j$. Thus $\det u(z;\calJ)$ has zero of order $q_0$ at $z=z_j$.

Hence $\dim\ker u(z_j;\calJ)\le q_0$. However, $$\mathbf{0}=\lim_{z\to z_j}(z-z_j)u(z;\calJ^{(1)})=z_j^{-1}u(z_j;\calJ)\lim_{z\to z_j}(z-z_j)M(z;\calJ)A_1=z_j^{-1}u(z_j;\calJ)\widetilde{w}_j A_1,$$ which implies $\ran \widetilde{w}_j\subseteq \ker u(z_j;\calJ)$. Then $q_0=\dim \ran \widetilde{w}_j \le\dim\ker u(z_j;\calJ)\le q_0$, which means $\ran \widetilde{w}_j= \ker u(z_j;\calJ)$. $\ran \widetilde{w}_j=\ran w_j$ is obvious.

Since $\dim\ker u(z_j;\calJ)= q_0$ and $\det u(z;\calJ)$ has zero of order $q_0$ at $z=z_j$, by Lemma~\ref{p4_lm0}  the order of the pole of $u(z;\calJ)^{-1}$ at $z=z_j$ cannot be larger than $1$.

\medskip

(v) If $z\in\partial \D$, then $u_n(z;\calJ)$ and $u_n(z^{-1};\calJ)$ solve the same Jacobi equation, and so the Wronskian $W_n(u_\cdot(z;\calJ);u_\cdot(\bar{z}^{-1};\calJ)^*)$ is constant. By (ii), the Wronskian at infinity is $$\lim_{n\to\infty} u_n(z)A_n u_{n+1}(z)^*-u_{n+1}(z)A_n^* u_n(z)^*=(z^{-1}-z)\mathbf{1},$$ while evaluating it at zero gives
\begin{equation*}
u_0(z)u_1(z)^*-u_1(z)u_0(z)^*=(z^{-1}-z)\mathbf{1},
\end{equation*}
or
\begin{equation}\label{p3_eq29}
\imm \left[u_1(e^{i\theta})u_0(e^{i\theta})^*\right]=\sin\theta\,\mathbf{1}.
\end{equation}
This implies that for $\theta \ne 0$, $u_0(e^{i\theta};\calJ)$ is invertible.

To prove that the poles at $\pm1$ are at most of order $1$, just note that using~(\ref{p3_eq24}) (which is proven in (vi)), the absolutely continuous part of $\mu$ is
\begin{equation*}
f(2\cos\theta)=\pi^{-1} \left| \imm M(e^{i\theta}) \right|=\pi^{-1}\left|\sin\theta\right|\,\left[u(e^{i\theta})^* u(e^{i\theta})\right]^{-1},
\end{equation*}
and then in order for
\begin{equation*}
\int_{-2}^2 f(x)dx=2\int_{0}^\pi \sin\theta \,f(2\cos\theta)d\theta=\frac2\pi \int_{0}^\pi \sin^2\theta\,\left[u(e^{i\theta})^* u(e^{i\theta})\right]^{-1}d\theta
\end{equation*}
to be finite, we must have that the pole of $u(z)^{-1}$ at $\pm1$ is at most of order $1$.

\medskip

(vi) By $u_1(z;\calJ)=u(z;\calJ) M(z;\calJ)$, for $\theta\ne0$,
\begin{multline*}\label{p3_eq32}
\imm M(e^{i\theta})=\imm u(e^{i\theta})^{-1}u_1(e^{i\theta})= \imm \left(u(e^{i\theta})^{-1}u_1(e^{i\theta}) \left[u(e^{i\theta})^* {u(e^{i\theta})^*}^{-1} \right]\right) \\= u(e^{i\theta})^{-1}\imm \left[u_1(e^{i\theta}) u(e^{i\theta})^* \right] {u(e^{i\theta})^*}^{-1}  =\sin\theta\,\left[u(e^{i\theta})^* u(e^{i\theta})\right]^{-1}
\end{multline*}
by~(\ref{p3_eq29}).

\medskip

(vii) This part follows immediately from \eqref{p3_eq19} and (i). One can also obtain this using (iii) and \eqref{p3_m-recur} only.

\medskip

(viii) 
By (iii), $M$ is meromorphic in the region where $u$'s are analytic. Note that \eqref{p3_eq25}  at $z=e^{i\theta}$ is  \eqref{p3_eq24}. Thus if we define $\widehat{M}(z)=M^\sharp(z)+(z-z^{-1})\left[u^\sharp(z;\calJ)u(z;\calJ)\right]^{-1}$ for $1<|z|<R$, then $M(z)=\widehat{M}(z)$ on $\partial\bbD$, and \eqref{p3_eq25} follows by analytic continuation.

\medskip

(ix) Note that $\calJ^{(1)}$ also satisfies $(A3)$, and so $u(z;\calJ^{(1)})$ is  analytic in $\bbD_R$. Combining \eqref{p3_eq22} and \eqref{p3_eq25} we obtain
\begin{equation*}
u(z;\calJ^{(1)})=z^{-1}u(z;\calJ)\left[M^\sharp(z)+(z-z^{-1})\left[u^\sharp(z;\calJ)u(z;\calJ)\right]^{-1}\right] A_1, \quad R^{-1}<|z|<R.
\end{equation*}
Analyticity of $u(z;\calJ^{(1)})$ at $z_j^{-1}$ means that the residues must cancel out:
\begin{equation*}
\begin{aligned}
\mathbf{0}&=\lim_{z\to z_j^{-1}} (z-z_j^{-1})u(z;\calJ)M^\sharp(z)+\lim_{z\to z_j^{-1}} (z-z_j^{-1})(z-z^{-1})\left[u^\sharp(z;\calJ)\right]^{-1} \\
&= u(z_j^{-1};\calJ)\lim_{z\to z_j} (z^{-1}-z_j^{-1})M(\bar{z})^*+(z_j^{-1}-z_j)\lim_{z\to z_j} (z^{-1}-z_j^{-1})\left[u(\bar{z};\calJ)^*\right]^{-1}\\
&=\frac{1}{z_j^2}u(z_j^{-1};\calJ)\widetilde{w}_j^*+\frac{1}{z_j^2}(z_j-z_j^{-1})[\lim_{z\to z_j}(z-z_j)u(z;\calJ)^{-1}]^*,
\end{aligned}
\end{equation*}
which gives~(\ref{p3_eq3.42}).

The rightmost equality of \eqref{p3_eq3.43} comes from Lemma~\ref{p4_lm0}. The inclusion part of \eqref{p3_eq3.43} follows immediately from \eqref{p3_eq3.42}.
\end{proof}
\end{theorem}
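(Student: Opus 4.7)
The plan is to handle the parts of the theorem essentially in order, exploiting the structure that $g_n$ and $c_n$ behave under shifts by $\calJ^{(k)}$, and passing to limits where needed.

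For (i), I would first verify the recurrence for the eventually-free matrices $\widetilde{\calJ}_l$, where $u(z;\widetilde{\calJ}_l) = g_l$ by \eqref{p3_eq6} and the old Jost solution defined via $v_n(z;\widetilde{\calJ}_l)=z^n$ for $n$ large obviously satisfies the recurrence. Checking that $v_k$ agrees with $z^k u(z;[\widetilde{\calJ}_l]^{(k)})A_k^{-1}$ at $n=0$ using the recurrence handles the eventually-free case; then general $\calJ$ follows from taking $l \to \infty$ and uniform convergence on compacts of $g_l(z;\calJ) \to u(z;\calJ)$. For (ii), I would estimate $\|u(z;\calJ^{(n)}) - \mathbf{1}\|$ by summing the telescoping bound from \eqref{p3_eq14} applied to $\calJ^{(n)}$; the tail vanishes by hypothesis (A1)/(A2)/(A3) combined with Lemma \ref{p3_lm5}. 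For matrices only asymptotic to type $1$, Theorem \ref{p3_th3} reduces the statement to the type $1$ case via $\sigma_n \to \sigma_\infty$. Part (iii) is immediate from $u_1(z;\calJ) = -u_0(z;\calJ)\mathfrak{m}(z+z^{-1};\calJ)$ (a standard identity in \cite{DPS}) combined with the definition \eqref{p3_eq19}. Part (vii) then follows by combining \eqref{p3_eq19} with the already-proved recurrence (i), so these parts are quick once (i)--(iii) are in hand.

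The main obstacle is part (iv), the characterization of zeros. First, if $M(z;\calJ)$ is regular at a point $z\in\bbD$, suppose for contradiction that $f^* u(z;\calJ) = \bdnot$ for some nonzero $f$; by (iii), $f^*u_1(z;\calJ) = \bdnot$, and then by the recurrence (i), $f^* u_n(z;\calJ) = \bdnot$ for all $n$, contradicting (ii). So zeros of $u$ only occur at eigenvalue-related points $z_j$. For a fixed eigenvalue $E_j$ of multiplicity $q_0$, I would use Lemma \ref{stripping2} to pick $N$ with $\sigma(\calJ^{(N)}) \subseteq [-2-\varepsilon, 2+\varepsilon]$, so $q_N = 0$, and then induct downwards: $\det M(z;\calJ^{(n)})$ has a zero at $z_j$ of order $q_{n+1}-q_n$ (using \cite[Thm 2.28]{DPS}), and \eqref{p3_eq22} applied at level $n$ then gives $\det u(z;\calJ^{(n)})$ a zero of order $q_n$ at $z_j$. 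To pin down $\ker u(z_j;\calJ) = \ran\widetilde{w}_j$, take the limit $z \to z_j$ in $u(z;\calJ^{(1)}) = z^{-1} u(z;\calJ)M(z;\calJ) A_1$ multiplied by $(z-z_j)$: the left side vanishes (since the LHS is regular in view of $q_1 \leq q_0$ gives an order-$q_0$ zero inductively), yielding $u(z_j;\calJ)\widetilde{w}_j = \bdnot$, i.e., $\ran\widetilde{w}_j \subseteq \ker u(z_j;\calJ)$; a dimension count using $q_0 = \dim\ran\widetilde{w}_j$ and $\dim\ker u(z_j;\calJ) \le q_0$ (from the determinant order) forces equality. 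Lemma \ref{p4_lm0} then gives the pole of $u^{-1}$ at $z_j$ is of order exactly $1$.

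For (v) and (vi), I would use that on $\partial\bbD$, $u_n(z)$ and $u_n(z^{-1})$ both solve the recurrence (i), so their Wronskian is $n$-independent. Evaluating at $n=\infty$ via (ii) gives $(z^{-1}-z)\bdone$, while at $n=0$ this reads $u_0 u_1^* - u_1 u_0^* = (z^{-1}-z)\bdone$, i.e., $\imag[u_1 u_0^*] = \sin\theta\,\bdone$. Invertibility of $u(e^{i\theta};\calJ)$ for $\theta\neq 0$ follows, giving (v) away from $\pm 1$. The formula \eqref{p3_eq24} of (vi) is obtained by writing $M = u^{-1}u_1$ and using the Wronskian identity; the order of the poles at $\pm 1$ is then constrained by integrability of the a.c.\ density $\pi^{-1}|\sin\theta|\,[u^*u]^{-1}$ against Lebesgue measure on $[-2,2]$. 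Part (viii) is analytic continuation: under (A3), $u$ is analytic in $\bbD_R$, so the right side of \eqref{p3_eq25} defines a meromorphic function that agrees with $M$ on $\partial\bbD$ and hence in the overlap region. Finally, for (ix), I apply (iii) to $\calJ$: $u(z;\calJ^{(1)}) = z^{-1}u(z;\calJ)M(z;\calJ)A_1$, and substitute the formula from (viii) for $M$ valid in $R^{-1}<|z|<R$. Since $\calJ^{(1)}$ also satisfies (A3), $u(z;\calJ^{(1)})$ is analytic at $z_j^{-1}$, so the residues from the $M^\sharp$ term and the $(z-z^{-1})[u^\sharp u]^{-1}$ term at $z = z_j^{-1}$ must cancel, which is exactly the desired identity \eqref{p3_eq3.42} after simplification; \eqref{p3_eq3.43} is then immediate from Lemma \ref{p4_lm0} and \eqref{p3_eq23}.
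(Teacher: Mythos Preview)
Your proposal is correct and follows essentially the same approach as the paper's own proof: the same approximation by $\widetilde{\calJ}_l$ for (i), the same telescoping estimate via \eqref{p3_eq14} for (ii), the same citation of \cite{DPS} for (iii), the same downward induction on $\det u(z;\calJ^{(n)})$ combined with the dimension count for (iv), the same Wronskian computation for (v)--(vi), and the same residue cancellation for (ix). One minor wording issue: in (iv), the vanishing of $\lim_{z\to z_j}(z-z_j)u(z;\calJ^{(1)})$ is simply because $u(z;\calJ^{(1)})$ is analytic at $z_j$ --- no appeal to $q_1\le q_0$ is needed there.
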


We also see
\begin{lemma}\label{p4_convergence}
Assume $\calJ$ is a Jacobi matrix asymptotic to type $1$, and let its Jacobi parameters satisfy $(A1)$, $(A2)$, or $(A3)$. Then uniformly on the compacts of the appropriate region,
\begin{align*}
u(z;\calJ^{(n)}) &\to \bdone, \\
M(z;\calJ^{(n)}) & \to\bdone z,
\end{align*}
where $u^{(n)}$ and $M^{(n)}$ are the Jost function and the $M$-function, respectively, for the $n$ times stripped operator $\calJ^{(n)}$.
\begin{proof}
%
Note that $M^{(n)}(z)=z u(z;\calJ^{(n)})^{-1} u(z;\calJ^{(n+1)})A_{n+1}^{-1}=A_n^{-1} u_n(z;\calJ)^{-1} u_{n+1}(z;\calJ)$
. But $A_n\to\bdone$ and $z^{-n} u_n(z)\to\bdone$ uniformly on compacts of the appropriate region by \eqref{p3_eq21}. This and \eqref{p3_eq19} give the result.
\end{proof}
\end{lemma}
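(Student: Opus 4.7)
My plan is to read everything off the definitions \eqref{p3_eq19}, \eqref{p3_eq21}, and \eqref{p3_eq22}, essentially by rearranging what has already been proved in Theorem \ref{p3_th5} and Lemma \ref{p1_thm1}.

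First, I would establish the convergence of $u(z;\calJ^{(n)})$. The definition \eqref{p3_eq19} gives
\[
u(z;\calJ^{(n)}) = z^{-n} u_n(z;\calJ)\, A_n,
\]
and \eqref{p3_eq21} tells us that $z^{-n} u_n(z;\calJ) \to \bdone$ uniformly on compacts of the appropriate region. Since $\calJ$ is asymptotic to type~$1$ and $(A1)$ holds, $\calJ$ lies in the Nevai class, so by Lemma \ref{p1_thm1} applied within its equivalence class we have $A_n \to \bdone$. Multiplying the two convergences gives $u(z;\calJ^{(n)}) \to \bdone$ uniformly on compacts.

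Next, for the $M$-functions, I would apply identity \eqref{p3_eq22} to the stripped matrix $\calJ^{(n)}$ in place of $\calJ$, obtaining
\[
u(z;\calJ^{(n+1)}) = z^{-1} u(z;\calJ^{(n)})\, M(z;\calJ^{(n)})\, A_{n+1},
\]
and hence, for $z$ in a compact set and $n$ large enough that $u(z;\calJ^{(n)})$ is invertible there (which is automatic once $u(z;\calJ^{(n)})$ is close enough to $\bdone$),
\[
M(z;\calJ^{(n)}) = z\, u(z;\calJ^{(n)})^{-1}\, u(z;\calJ^{(n+1)})\, A_{n+1}^{-1}.
\]
Taking $n \to \infty$ and using the first part together with $A_{n+1} \to \bdone$ yields $M(z;\calJ^{(n)}) \to z\,\bdone$ uniformly on compacts.

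The only mild subtlety is ensuring that $u(z;\calJ^{(n)})^{-1}$ really does exist on a fixed compact subset $K$ of the appropriate region for all sufficiently large $n$. This is automatic from the first convergence statement: once $\sup_{z \in K} \norm{u(z;\calJ^{(n)}) - \bdone} < 1$, inversion is continuous and bounded on $K$. Alternatively one can appeal to Lemma \ref{stripping2} plus Theorem \ref{p3_th5}(iv)--(v), which show that the zeros of $u(\cdot;\calJ^{(n)})$ eventually disappear from any fixed compact away from $\pm 1$. I do not expect any real obstacle here; the lemma is essentially a bookkeeping consequence of Theorem \ref{p3_th5}.
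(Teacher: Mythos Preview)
Your proposal is correct and follows essentially the same approach as the paper: both arguments read off \eqref{p3_eq19} and \eqref{p3_eq22} to express $u(z;\calJ^{(n)})$ and $M^{(n)}(z)$ in terms of $u_n(z;\calJ)$, and then invoke \eqref{p3_eq21} together with $A_n\to\bdone$. The paper is simply terser, writing $M^{(n)}(z)=A_n^{-1}u_n(z;\calJ)^{-1}u_{n+1}(z;\calJ)$ directly and citing \eqref{p3_eq19} and \eqref{p3_eq21} for both conclusions at once, whereas you separate the two convergences and add the (harmless) remark about eventual invertibility of $u(z;\calJ^{(n)})$.
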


To end this section, we get the following result for free as a corollary from Theorems \ref{p3_th5}, \ref{p3_th2}, and \cite{K_sz}. The scalar analogue is proven in Killip--Simon \cite[Thm 9.14]{KS}.

\begin{theorem}\label{p3_th7000}
Let $\calJ$ be of type asymptotic to type $1$ and satisfies $(A1)$
. Then $u(z;\calJ)$ has the following factorization:
\begin{equation*}
u(z;\calJ)=U B(z) O(z),
\end{equation*}
where $U$ is a constant unitary  matrix, $B(z)$ is a matrix-valued Blaschke-Potapov product with zeros at $\{z_j\}$, and $O(z)$ is a matrix-valued outer function, uniquely defined from the conditions
\begin{equation}\label{p3_eq3.56}
\begin{gathered}
O(e^{i\theta})^* O(e^{i\theta}) = \sin\theta \,\left(\imm M(e^{i\theta})\right)^{-1}, \\
O(0) =O(0)^* > \bdnot ,\\
\log\left|\det O(e^{i\theta})\right| = \int_{-\pi}^\pi \log \left| \det O(e^{i\theta}) \right| \frac{d\theta}{2\pi}.
\end{gathered}
\end{equation}
In particular, $u$ has trivial singular inner part.
\begin{remarks}
1. That the outer factor $O$ can be uniquely defined from the conditions \eqref{p3_eq3.56}, as long as \eqref{p3_eq3.57} holds, 
is Lemma \ref{p2_lm2}.

2. $O$  has an integral representation in terms of Potapov multiplicative integral, see \cite{K_sz} for the details.
\end{remarks}
\begin{proof}
By Theorem~\ref{p3_th2} $u(z;\calJ)=(1-z^2) L(z)$, where $L(z)=\lim_{n\to\infty} z^n \mathfrak{p}_n(z+z^{-1})$. By the results from \cite{K_sz}, $L(z)$ is an $H^2(\bbD)$ function with no singular inner part. Since $1-z^2$ is a bounded outer function, $u$ is an $H^2(\bbD)$ function with no singular inner part as well.

By \eqref{p3_eq24}, $u(e^{i\theta};\calJ)^* u(e^{i\theta};\calJ)=\sin\theta \,\left(\imm M(e^{i\theta})\right)^{-1}$, and so \eqref{p3_eq3.56} has to hold. Note that
\begin{equation}\label{p3_eq3.57}
\int_{-\pi}^\pi \log\det \left[\sin\theta (\imm M(e^{i\theta}))^{-1}\right] \dt> -\infty
\end{equation}
 is equivalent to
\begin{equation*}\label{p3_eq3.58}
\left| \int_{-2}^2 (4-x^2)^{-1/2} \log\det f(x) dx \right|  <\infty,
\end{equation*}
which is indeed finite given  $(A1)$ (see \cite[Section 14]{DKS}).
\end{proof}
\end{theorem}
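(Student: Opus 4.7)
The plan is to realize $u(z;\calJ)$ as an $H^2(\bbD)$ function with trivial singular inner part, and then read off the Blaschke--Potapov/outer decomposition from the general factorization theory. First, Theorem \ref{p3_th2} gives the identity $u(z;\calJ) = (1-z^2)L(z)$, where $L(z) = \lim_{n\to\infty} z^n\mathfrak{p}_n^L(z+z^{-1})$. The results of \cite{K_sz} assert that, under $(A1)$ and the type-1 asymptotic hypothesis, $L$ is a matrix-valued $H^2(\bbD)$ function with no singular inner part. Since $1-z^2$ is a bounded scalar outer function, multiplication by it preserves both $H^2$-membership and triviality of the singular inner part, so $u \in H^2(\bbD)$ has no singular inner factor either.

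Next, I would apply the standard (matrix) Beurling--Lax type factorization: any matrix $H^2$ function with determinant not vanishing identically decomposes as a product of a constant unitary, a Blaschke--Potapov product, a singular inner part, and an outer factor. The preceding paragraph rules out the singular inner part. By Theorem \ref{p3_th5}(iv) the zeros of $u$ in $\bbD$ occur exactly at the points $z_j$ corresponding to discrete eigenvalues of $\calJ$, with kernel directions $\ker u(z_j) = \ran w_j$; these are precisely the data prescribing a Blaschke--Potapov product $B(z)$ via Lemma \ref{product}. So the inner factor of $u$ is of the stated form $U B(z)$ for some constant unitary $U$.

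Finally, the outer factor is pinned down by Lemma \ref{p2_lm2}. Taking $|z|\to 1$, and using that both $U$ and $B$ have unitary boundary values, \eqref{p3_eq24} yields
\begin{equation*}
O(e^{i\theta})^* O(e^{i\theta}) = u(e^{i\theta})^* u(e^{i\theta}) = \sin\theta\,\bigl(\imm M(e^{i\theta})\bigr)^{-1},
\end{equation*}
which is the first condition in \eqref{p3_eq3.56}; the remaining two conditions hold automatically for any outer factor with the given boundary modulus, after absorbing a residual unitary into $U$. To legitimately invoke Wiener--Masani one must verify the Szeg\H{o} condition \eqref{p3_eq3.57}, and this is the one genuinely nontrivial step: changing variables $x = 2\cos\theta$ recasts it as the condition \eqref{p3_eq3.58}, which under $(A1)$ is known to be finite by the matrix quasi-Szeg\H{o} analysis of \cite[Section 14]{DKS}. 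This is the main obstacle, but it reduces to citing \cite{DKS}; the rest of the argument is a direct assembly of Theorem \ref{p3_th2}, Theorem \ref{p3_th5}(iv)(vi), and Lemmas \ref{p2_lm2} and \ref{product}.
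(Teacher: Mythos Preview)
Your proposal is correct and follows essentially the same approach as the paper's own proof: factor $u=(1-z^2)L$ via Theorem~\ref{p3_th2}, invoke \cite{K_sz} to see $L\in H^2(\bbD)$ with no singular inner part, note that $1-z^2$ is bounded outer, then identify the boundary modulus via \eqref{p3_eq24} and check the Szeg\H{o} condition \eqref{p3_eq3.57} via \cite{DKS}. You spell out the Blaschke--Potapov and outer pieces (using Theorem~\ref{p3_th5}(iv) and Lemmas~\ref{product}, \ref{p2_lm2}) more explicitly than the paper does, but the substance is the same.
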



\section{The Inverse Problem}

Now we start with an analytic function $u$ and seek to construct such a measure $\mu$ that $u$ is its Jost function. We do this in Subsection \ref{p3_sub31}. In the proof of Theorem \ref{p3_th_constr} however, we appeal to the results later in the section. Note that this theorem is never used in Subsections \ref{p3_sub32} and \ref{p3_sub33} (i.e., we are never assuming that $u$ is actually the Jost function for $\mu$). In Subsections \ref{p3_sub32} and \ref{p3_sub33} we derive the exponential decay of the Jacobi parameters of $\mu$, proving Theorems \ref{p3_th10} and \ref{p3_th11}. Subsection \ref{p3_sub34} is just a restatement of the results in terms of the so-called perturbation determinants.

Throughout this section let $u$ be an analytic function in $\bbD_R$ for some $R>1$ satisfying the conditions of Theorem \ref{p3_th_constr}. Note that by \eqref{p2_herg1}--\eqref{p2_herg2} and \eqref{p3_eq24} the absolutely continuous part $f(x)$ of $\mu$ is forced to be $f(2\cos\theta)=\pi^{-1}\left|\sin\theta\right|\,\left[u(e^{i\theta})^* u(e^{i\theta})\right]^{-1}$, and its singular part to be pure point with some weights $w_j$ at $E_j=z_j+z_j^{-1}$, where $z_j$ are zeros of $u$ in $\bbD$. By Theorem \ref{p3_th5}(iv), $w_j$ must satisfy the condition (ii) of Theorem \ref{p3_th_constr}. Assuming also (i), this $\mu$ is a probability measure. Its $M$-function satisfies \eqref{p2_herg1}, so
\begin{equation}\label{p3_eq34}
\imm M(e^{i\theta})=\sin\theta\,\left[u(e^{i\theta})^* u(e^{i\theta})\right]^{-1}
\end{equation}
holds. Just as in the proof of Theorem~{\ref{p3_th5}}(viii), assuming ~\eqref{eq:symmetry} we can extend $M$ meromorphically to $\bbD_R$ and see that
\begin{equation*}
M(z)=M^\sharp(z)+(z-z^{-1})\left[u^\sharp(z)u(z)\right]^{-1}, \quad R^{-1}<|z|<R.
\end{equation*}

Let $\calJ$ with Jacobi parameters $\{A_n,B_n\}_{n=1}^\infty$ be the type $1$ Jacobi matrix for $d\mu$.

Define inductively
\begin{align}
u^{(n+1)}(z)&=z^{-1}u^{(n)}(z)M^{(n)}(z)A_{n+1};\label{p3_eq41}\\
A_{n+1} M^{(n+1)}(z)A_{n+1}^*&=\left(z+\frac1z\right)\mathbf{1}-B_{n+1}-{M^{(n)}(z)}^{-1}.\label{p3_eq42}
\end{align}
Then $M^{(n)}$ is the $M$-function for $\calJ^{(n)}$ and, by an easy induction,
\begin{equation}\label{p3_eq4.5}
M^{(n)}(z)={M^{(n)}}{}^\sharp(z)+(z-z^{-1})\left[{u^{(n)}}{}^\sharp(z) u^{(n)}(z)\right]^{-1}, \quad R^{-1}<|z|<R,
\end{equation}
 holds.

\medskip
\subsection{Proof of Theorem \ref{p3_th_constr}
}\label{p3_sub31}

\begin{remark} It is clear that any two matrices having $u$ as its Jost function are asymptotic to each other, and moreover,
related by $\widetilde{\calJ}=U\calJ U^{-1}$, where $U$ is an $l\times l$ block diagonal unitary $U=\sigma_1\oplus\sigma_2\oplus\sigma_3\oplus\ldots$, where $\sigma_n$ are unitary with $\sigma_1=\mathbf{1}$ and $\lim_{n\to\infty}\sigma_n=\mathbf{1}$ (which is a stronger condition than just being asymptotic).
\end{remark}

\begin{proof}
The results of this section show that $||B_n||$ and $||\mathbf{1}-A_nA_n^*||$ decay exponentially (with the rate $r^{-2n}$, where $r$ could be only slightly larger than $1$). Thus the Jost function $\widetilde{u}$ exists and is analytic in $\bbD_r$. Consider
\begin{equation*}\label{p3_eq4.2}
g(z)=\widetilde{u}(z)u(z)^{-1}.
\end{equation*}
We want to prove $g$ is analytic and nonvanishing. Since $u^{-1}$ has a first order pole at $z_j$, $\widetilde{u} u^{-1}$ is analytic at $z_j$ if and only if
\begin{equation}\label{p3_eq4.3}
\widetilde{u}(z_j) \res_{z=z_j} u(z)^{-1}=\bdnot,
\end{equation}
which is equivalent to the condition $\ran \res_{z=z_j} u(z)^{-1}\subseteq \ker \widetilde{u}(z_j)$. However by Lemma \ref{p4_lm0}, $$\ran \res_{z=z_j} u(z)^{-1}=\ker u(z_j),$$ which equals to $\ran w_j$ by the condition (ii). By Theorem \ref{p3_th5}(iv), $\ran w_j=\ker \widetilde{u}(z_j)$, and \eqref{p3_eq4.3} follows.


$g(z)$ is analytic at $\pm1$ by the following arguments. By~(\ref{p3_eq24}) and~(\ref{p3_eq34}),
\begin{equation*}
u(\pm1)^* u(\pm1)=\widetilde{u}(\pm1)^* \widetilde{u}(\pm1).
\end{equation*}
This implies $\ker u(\pm1)=\ker \widetilde{u}(\pm1)$ (since $\ker T=\ker T^* T$), and then identical arguments as for $z_j$'s show that  $g(z)$ is analytic at $\pm1$.

Thus we have proved $g$ is analytic on a neighborhood of $\overline{\D}$, and switching the roles of $u$ and $\widetilde{u}$, we obtain that $g$ is also non-vanishing there.

Now,
\begin{equation*}
\begin{aligned}
g(z)^* g(z)=[u(z)^{-1}]^* \widetilde{u}(z)^* \widetilde{u}(z) u(z)^{-1}&=\sin\theta \, [u(z)^{-1}]^* [\imm M(e^{i\theta})]^{-1} u(z)^{-1} \\&=[u(z)^{-1}]^* u(z)^* u(z) u(z)^{-1}=\mathbf{1}.
\end{aligned}
\end{equation*}
So $g(z)$ is analytic and invertible on $\overline{\D}$ and unitary on $\partial \D$, which implies (e.g., by the Schwarz reflection) that $g(z)\equiv v_0$ for some constant unitary $v_0$. Thus, $u(z)=v_0^*\widetilde{u}(z)$. Theorem~\ref{p3_th3} implies that $u$ is the Jost function for the Jacobi matrix with parameters $(A_1 v_0,v_0^* A_2 v_0,v_0^* A_3 v_0,\ldots)$, $(B_1,v_0^* B_2 v_0,v_0^* B_3 v_0,\ldots)$.
\end{proof}

\subsection{Proof of Theorems \ref{p3_th10} and \ref{p3_th11} for the case of no bound states}\label{p3_sub32}

In this subsection we prove Theorems \ref{p3_th10} and \ref{p3_th11} for the case when $\mu$ has no bound states. Thus these theorems take the following form.

\begin{theorem}\label{p3_th7}
Let $u(z)$ be a polynomial obeying~\eqref{eq:symmetry} and
\begin{itemize}
\item[(i)] $u(z)$ is invertible on $\overline{\D}\setminus \{\pm1\}$;
\item[(ii)] if $\pm1$ are zeros, they are simple;
\item[(iii)]$\frac2\pi \int_{0}^\pi \sin^2\theta\,\left[u(e^{i\theta})^* u(e^{i\theta})\right]^{-1}d\theta=\mathbf{1}$.
\end{itemize}
Then $u$ is the Jost function of a Jabobi matrix with
\begin{equation}\label{p3_eq4.9}
\mathbf{1}-A_nA_n^*=B_n=\mathbf{0} \quad \mbox{ for all large } n.
\end{equation}
\end{theorem}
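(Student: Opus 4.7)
The plan is to invoke Theorem~\ref{p3_th_constr} to produce a Jacobi matrix $\calJ$ (asymptotic to type $1$) with $u$ as its Jost function, and then to strip it repeatedly, showing that the Jost functions $u^{(n)}$ of $\calJ^{(n)}$ are matrix polynomials whose growth order at infinity strictly decreases, eventually forcing them to be constant; at that point the recursion will force the remaining Jacobi coefficients to be free.

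To set this up, I would first note that since hypothesis~(i) prevents $u$ from having any zeros in $\bbD$, the measure $\mu$ produced by Theorem~\ref{p3_th_constr} has no bound states, so $\sigma(\calJ^{(n)})\subseteq[-2,2]$ and each $M^{(n)}$ is analytic on $\bbD\setminus\{\pm1\}$ with at most simple poles at $\pm1$ for every $n$. Then I would prove by induction on $n$ that $u^{(n)}$ is an entire matrix polynomial with $\|u^{(n)}(z)\|=O(|z|^{d_n})$ at infinity, nondegenerate on $\overline\bbD\setminus\{\pm1\}$ with at most simple zeros at $\pm1$, and that $d_{n+1}\le\max(0,d_n-2)$. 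For the polynomiality step, substituting \eqref{p3_eq4.5} into \eqref{p3_eq41} yields
\begin{equation*}
u^{(n+1)}(z)=z^{-1}u^{(n)}(z)M^{(n)\sharp}(z)A_{n+1}+(1-z^{-2})[u^{(n)\sharp}(z)]^{-1}A_{n+1},
\end{equation*}
which is analytic outside $\overline\bbD$ (since both $M^{(n)\sharp}$ and $u^{(n)\sharp}$ are analytic and invertible there away from $\pm1$), analytic inside $\bbD$ (since $u^{(n)}M^{(n)}$ is analytic there away from $\pm1$ and the simple zeros of $u^{(n)}$ at $\pm1$ absorb the simple poles of $M^{(n)}$), and analytic at $0$ via $M^{(n)}(z)=z\bdone+O(z^2)$ from~\eqref{p3_m-taylor}. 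Liouville's theorem, together with the growth bound obtained from $M^{(n)\sharp}(z)=O(|z|^{-1})$ and $[u^{(n)\sharp}]^{-1}=O(1)$ at infinity, then makes $u^{(n+1)}$ a polynomial of the required degree, and Theorem~\ref{p3_th5}(iv),(v) applied to $\calJ^{(n+1)}$ gives the nondegeneracy properties that close the induction.

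After finitely many strippings, the degree drops to $0$ and $u^{(N)}\equiv C_N$ is constant; the argument above then shows $u^{(n)}\equiv C_n$ remains constant for all $n\ge N$. For each such $n$, the recursion~\eqref{p3_eq41} forces $M^{(n)}(z)=z\,C_n^{-1}C_{n+1}A_{n+1}^{-1}$, and comparing with~\eqref{p3_m-taylor} at $z=0$ simultaneously yields $C_n^{-1}C_{n+1}A_{n+1}^{-1}=\bdone$, $B_{n+1}=\bdnot$, and $A_{n+1}A_{n+1}^*=\bdone$, which is the claim of the theorem for all indices past $N$.

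The main obstacle is the matrix-valued pole-cancellation in the induction step: ensuring that each pole of $M^{(n)}$ at a zero $\alpha$ of $u^{(n)}$ (outside $\overline\bbD$, and at $\pm1$) is cancelled exactly in the matrix sense by the left factor $u^{(n)}$. Outside $\overline\bbD$ this reduces to the transparent identity $u^{(n)}(z-z^{-1})[u^{(n)\sharp}u^{(n)}]^{-1}=(z-z^{-1})[u^{(n)\sharp}]^{-1}$ combined with invertibility of $u^{(n)\sharp}$ there, but the rank-matching at $\pm1$ requires Theorem~\ref{p3_th5}(iv),(v) together with the Smith--McMillan form (Lemma~\ref{p4_lmSM}) to be invoked cleanly.
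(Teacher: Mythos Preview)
Your proposal contains a circular dependency. You begin by invoking Theorem~\ref{p3_th_constr} to obtain a Jacobi matrix $\calJ$ for which $u$ is the Jost function, and later you apply Theorem~\ref{p3_th5}(iv),(v) to the stripped matrices $\calJ^{(n+1)}$ to close the induction. But in this paper the proof of Theorem~\ref{p3_th_constr} itself relies on the exponential-decay results established in this very subsection (see the first sentence of its proof). The paper explicitly warns that Theorem~\ref{p3_th_constr} is \emph{not} used in the proofs of Theorems~\ref{p3_th7} and~\ref{p3_th8}; at this stage one only knows that $\mu$ and $\calJ$ exist via Favard's theorem, not that $u$ is the Jost function nor that $\calJ$ satisfies $(A1)$. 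Consequently, your appeals to Theorem~\ref{p3_th5} are unjustified, and in particular you have no direct access to the rank-matching at $\pm1$ which you correctly flag as the main obstacle.

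The paper's argument avoids this by working purely with the recursively defined objects $u^{(n)}$, $M^{(n)}$ from \eqref{p3_eq41}--\eqref{p3_eq42}, never assuming $u^{(n)}$ is an actual Jost function. Analyticity of each $u^{(n)}$ in $\bbD_R$ (for arbitrary $R$, since $u$ is a polynomial) is handled by the self-contained induction of Theorem~\ref{p3_th9}; the pole cancellation at $\pm1$ is done there by the direct computation \eqref{p3_eq45}--\eqref{p3_eq49} using Hermiticity of the residue of $M^{(n)}$, not by invoking properties of the Jost function. Eventual constancy of $u^{(n)}$ is then obtained from the $L^2$-type norm estimate \eqref{p3_eq4.28}, which makes $|||u^{(n)}|||_{R_1}$ vanish once $n$ and $R_1$ are large. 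Your degree-drop idea is morally equivalent to this last step, but the inductive framework you need to support it is Theorem~\ref{p3_th9} and Lemma~\ref{p3_jost_boundary}, not Theorems~\ref{p3_th_constr} and~\ref{p3_th5}.
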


\begin{theorem}\label{p3_th8}
Let $u(z)$ be analytic in $\bbD_R$ for some $R>1$ and obeys ~\eqref{eq:symmetry} and (i)--(iii) from Theorem~{\ref{p3_th7}}, then $u$ is the Jost function of a Jacobi matrix with
\begin{equation}\label{p3_eq4.10}
\limsup_{n\to\infty}\left(||B_n||+||\mathbf{1}-A_nA_n^*||\right)^{1/2n}\le R^{-1}.
\end{equation}
\end{theorem}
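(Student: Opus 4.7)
The plan is to build the spectral measure $d\mu$ directly from $u$, extract from it the type-$1$ block Jacobi matrix $\calJ$ with parameters $(A_n,B_n)_{n=1}^\infty$, and propagate the $\bbD_R$-analyticity of $u$ through the $M$-function stripping recursion to establish simultaneously that $u$ is the Jost function of $\calJ$ and that $(A_n,B_n)$ decay at rate $R^{-2n}$.

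First I would set up the measure: define $d\mu$ on $[-2,2]$ to be purely absolutely continuous with density $f(2\cos\theta) = \pi^{-1}\sin\theta\,[u(e^{i\theta})^* u(e^{i\theta})]^{-1}$ and no point masses (there are no bound states by hypothesis). Conditions (i) and (ii) keep this density positive and finite off $\pm 2$ with at worst integrable singularities at $\pm 2$, and (iii) gives $\mu(\bbR) = \bdone$. Let $\calJ$ be the type-$1$ Jacobi matrix for $\mu$ and $M(z)=-\mathfrak m(z+z^{-1})$ the corresponding $M$-function. By construction $\imm M(e^{i\theta}) = \sin\theta\,[u(e^{i\theta})^* u(e^{i\theta})]^{-1}$ on $\partial\bbD$, so the reflection identity $M(z) = M^\sharp(z) + (z-z^{-1})[u^\sharp(z)u(z)]^{-1}$ continues $M$ meromorphically from $\bbD$ to all of $\bbD_R$ with poles on $\overline\bbD$ only (simple) at $\pm 1$. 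Lemma~\ref{denisov} then places $\calJ$ in the Nevai class.

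Next I would iterate: setting $u^{(0)}=u$, $M^{(0)}=M$, and defining $u^{(n+1)}(z) = z^{-1} u^{(n)}(z) M^{(n)}(z) A_{n+1}$ together with $M^{(n+1)}$ from \eqref{p3_eq42}, the central inductive claim is that $u^{(n)}$ extends analytically to $\bbD_R$, $M^{(n)}$ extends meromorphically to $\bbD_R$ with at most simple poles only at $\pm 1$, and the analog of \eqref{p3_eq4.5} persists. Analyticity of $u^{(n+1)}$ at $z=0$ is automatic from $M^{(n)}(z) = z\bdone + O(z^3)$; at $z=\pm 1$ the simple pole of $M^{(n)}$ is absorbed because $\ran \res_{\pm 1} M^{(n)} \subseteq \ker u^{(n)}(\pm 1)$ (a consequence of the boundary identity for $\imm M^{(n)}$ combined with Lemma~\ref{p4_lm0}); and elsewhere in $\bbD_R$ the reflection formula for $M^{(n)}$ in terms of $u^{(n)}$ and $u^{(n)\sharp}$ shows no new singularities appear. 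Persistence of \eqref{p3_eq4.5} at level $n+1$ is a short algebraic computation from the two recursions. Comparing this iteration with the Geronimo--Case construction (Theorem~\ref{p3_th5}(iii)) then identifies $u$ as the Jost function of $\calJ$.

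To extract the rate $R^{-2n}$ I would read \eqref{p3_m-taylor} at level $n$,
$$\bigl(M^{(n)}(z)/z\bigr)^{-1} - \bdone = -B_{n+1}z - (A_{n+1}A_{n+1}^*-\bdone)z^2 + O(z^3),$$
so that a Cauchy estimate on a circle $|z|=r$ with $1<r<R$ reduces everything to showing that $\bigl(M^{(n)}(z)/z\bigr)^{-1}-\bdone$ decays like $r^{-2n}$ uniformly on $|z|=r$. This should follow from a uniform (in $n$) bound on $\|u^{(n)}(z)^{\pm 1}\|$ on compact subsets of $\bbD_R\setminus\{\pm 1\}$ together with $u^{(n)}(z)\to \bdone$ (a $\bbD_R$-analog of Lemma~\ref{p4_convergence}) and the geometric factor $z^{-1}$ baked into the definition of $u^{(n+1)}$. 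The hard part will be establishing the uniform bound on $\|u^{(n)}\|$ near $\pm 1$ on a fixed compact of $\bbD_R$ without circularly invoking the very decay one is trying to prove. I would close the loop by an inductive bootstrap: establish $\sup_n\|u^{(n)}\|_{|z|=r_0}<\infty$ for $r_0$ only slightly larger than $1$ (accessible because the Nevai class already gives slow uniform control there), then use the geometric improvement in each iteration to progressively enlarge $r_0$ toward $R$.
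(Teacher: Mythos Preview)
Your setup of the measure, the reflection identity for $M$, the appeal to Denisov--Rakhmanov, and the inductive analyticity of $u^{(n)}$ in $\bbD_R$ are all correct and essentially match the paper (your treatment of the simple poles at $\pm1$ via $\ran\res_{\pm1}M^{(n)}\subseteq\ker u^{(n)}(\pm1)$ is exactly the paper's $T=-CC^*$ argument in Theorem~\ref{p3_th9}).

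The gap is in your extraction of the rate $R^{-2n}$. You propose to Cauchy-estimate the Taylor coefficients of $(M^{(n)}(z)/z)^{-1}-\bdone$ on a circle $|z|=r>1$, which forces you to show this quantity is $O(r^{-2n})$ \emph{uniformly on that circle}. But on $|z|=r>1$ the only control on $M^{(n)}$ comes from the reflection formula, which feeds back to $u^{(n)}$ and $u^{(n)\sharp}$ there; the ``geometric factor $z^{-1}$'' in $u^{(n+1)}=z^{-1}u^{(n)}M^{(n)}A_{n+1}$ does not by itself produce decay, because rewriting with \eqref{p3_eq4.5} gives
\[
u^{(n+1)}(z)=(1-z^{-2})\bigl(u^{(n)\sharp}(z)\bigr)^{-1}A_{n+1}+z^{-2}u^{(n)}(z)N_n^\sharp(z)A_{n+1},
\]
and the first term is of order one on $|z|=r>1$ for every $n$. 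Your bootstrap (``start at $r_0$ slightly larger than $1$ via the Nevai class, then enlarge'') does not get off the ground: the Nevai class gives $M^{(n)}(z)\to z\bdone$ only on compacts of $\bbD$, with no rate and no information for $|z|>1$, so there is no initial $r_0>1$ at which you obtain exponential decay to start the iteration.

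The paper circumvents this by never estimating on $|z|>1$ directly. It introduces the seminorm $|||f|||_{R_1}=\|P_+f\|_{L^2(R_1\partial\bbD)}$, observes that $(u^{(n)\sharp})^{-1}$ is analytic at $\infty$ and hence annihilated by $P_+$, and thus obtains $|||u^{(n+1)}|||_{R_1}\le R_1^{-2}\,|||u^{(n)}|||_{R_1}\,\|A_{n+1}\|\sup_{|z|=R_1}\|N_n^\sharp(z)\|$. The point is that $\sup_{|z|=R_1}\|N_n^\sharp(z)\|=\sup_{|w|=R_1^{-1}}\|M^{(n)}(w)/w\|$, which is a supremum \emph{inside} $\bbD$ and therefore controlled by the Nevai-class convergence $M^{(n)}(w)/w\to\bdone$. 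This transfers the problem from $|z|=R_1>1$ back to $|z|=R_1^{-1}<1$ and is the missing idea in your argument.
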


\begin{remark}
We denoted $\{A_n,B_n\}_{n=1}^\infty$ to be the type $1$ Jacobi coefficients for $d\mu$. $u$ will be the Jost function for a different Jacobi matrix (asymptotic to it). However \eqref{p3_eq4.9} and \eqref{p3_eq4.10} are invariant within the class of equivalent Jacobi matrices.
\end{remark}
%

%
%

Note that \eqref{p3_eq41} and \eqref{p3_eq42} define $u^{(n)}$ and $M^{(n)}$, which are in general meromorphic functions in $\bbD_R$. We will show below that $u^{(n)}$ are actually analytic. Let us first prove the following lemma.

\begin{lemma}\label{p3_jost_boundary}
Let $u^{(n)}$ and $M^{(n)}$ be given by \eqref{p3_eq41} and \eqref{p3_eq42}. Then $u^{(n)}$ has no zeros on $\partial\bbD$ except possibly at $\{\pm1\}$, in which case they are simple.
\begin{proof}
Since \eqref{p3_eq4.5} holds, we obtain
\begin{equation*}
f^{(n)}(2\cos\theta)=\pi^{-1} \left| \imm M^{(n)}(e^{i\theta}) \right| =\pi^{-1} | \sin\theta\ | \left[u^{(n)}(e^{i\theta})^* u^{(n)}(e^{i\theta})\right]^{-1},
\end{equation*}
where $f^{(n)}$ is the density of the spectral measure $\mu^{(n)}$ of $\calJ^{(n)}$. Since $\int_{-\pi}^\pi |\sin\theta| f^{(n)}(2\cos\theta)d\theta \le\mu^{(n)}(\bbR)\le\bdone$, we get the result.
\end{proof}
\end{lemma}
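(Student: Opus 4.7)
The plan is to extract a boundary identity from \eqref{p3_eq4.5} and then compare against the total mass constraint for the spectral measure $\mu^{(n)}$ of $\calJ^{(n)}$. With $z=e^{i\theta}$, one has $z-z^{-1}=2i\sin\theta$ and $(u^{(n)})^\sharp(e^{i\theta})=u^{(n)}(e^{i\theta})^*$, so \eqref{p3_eq4.5} gives
\begin{equation*}
\imm M^{(n)}(e^{i\theta})=\sin\theta\,\bigl[u^{(n)}(e^{i\theta})^* u^{(n)}(e^{i\theta})\bigr]^{-1}
\end{equation*}
at every $\theta$ where $u^{(n)}(e^{i\theta})$ is invertible. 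Combining with \eqref{p2_herg1} identifies the absolutely continuous density of $\mu^{(n)}$ (which by Favard is a probability measure, so $\mu^{(n)}(\bbR)=\bdone$) as $f^{(n)}(2\cos\theta)=\pi^{-1}|\sin\theta|\,[u^{(n)}(e^{i\theta})^* u^{(n)}(e^{i\theta})]^{-1}$. Changing variables $x=2\cos\theta$ in $\int_{-2}^{2} f^{(n)}(x)\,dx\le\bdone$ and taking trace yields the integrability
\begin{equation*}
\tfrac{2}{\pi}\int_{0}^{\pi}\sin^{2}\theta\,\tr\!\bigl[u^{(n)}(e^{i\theta})^* u^{(n)}(e^{i\theta})\bigr]^{-1}d\theta\le l.
\end{equation*}

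The first step is to rule out zeros on $\partial\bbD\setminus\{\pm 1\}$. If $u^{(n)}(e^{i\theta_0})$ had a nontrivial kernel for some $\theta_0\ne 0,\pi$, then $\tr[u^{(n)}(e^{i\theta})^* u^{(n)}(e^{i\theta})]^{-1}$ would diverge at $\theta_0$ in a non-integrable way, and because $\sin^{2}\theta_0>0$ the Jacobian factor does not kill the singularity; this contradicts the bound above.

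The second step is the simplicity claim at $\pm 1$. I would apply the Smith--McMillan form (Lemma \ref{p4_lmSM}) to $u^{(n)}$ at $z=1$ (the argument at $-1$ is identical): write $u^{(n)}(z)=E(z)\diag((z-1)^{\kappa_1},\ldots,(z-1)^{\kappa_l})F(z)$ with $\kappa_1\ge\cdots\ge\kappa_l\ge 0$. Directly from this factorization, $\tr[u^{(n)}(e^{i\theta})^* u^{(n)}(e^{i\theta})]^{-1}$ is dominated by its largest eigenvalue, which behaves like $|e^{i\theta}-1|^{-2\kappa_1}\asymp|\theta|^{-2\kappa_1}$ as $\theta\to 0$. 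So the integrand is of order $|\theta|^{2-2\kappa_1}$ near $\theta=0$, and the integrability forces $\kappa_1\le 1$. Reading the Smith--McMillan form backward, $(u^{(n)})^{-1}$ then has a pole of order $\kappa_1\le 1$ at $z=1$, which is the required simplicity (cf.\ Lemma \ref{p4_lm0}).

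The main (mild) obstacle is the second step: making the directional asymptotics rigorous. One should test against a vector $v\in\bbC^{l}$ chosen so that $F(1)v$ aligns with the first basis vector, so that $v^{*}[u^{(n)}(e^{i\theta})^{*}u^{(n)}(e^{i\theta})]^{-1}v\asymp|\theta|^{-2\kappa_1}$; since the trace dominates this quadratic form, the lower bound on the integrand that forces non-integrability when $\kappa_1\ge 2$ follows cleanly.
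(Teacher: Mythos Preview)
Your proof is correct and follows essentially the same approach as the paper: deduce from \eqref{p3_eq4.5} the boundary identity for $\imm M^{(n)}$, identify it with the a.c.\ density of the probability measure $\mu^{(n)}$, and use the resulting integrability to rule out zeros away from $\pm1$ and to bound the pole order at $\pm1$. The paper compresses all of this into ``we get the result,'' whereas you spell out the mechanism; your explicit use of the Smith--McMillan form to read off the $|\theta|^{2-2\kappa_1}$ local behavior near $\pm1$ is exactly the computation the paper leaves to the reader (and mirrors the argument used earlier in Theorem~\ref{p3_th5}(v)). One wording nit: ``dominated by its largest eigenvalue'' should be ``bounded below by its largest eigenvalue,'' since you need the trace as a lower bound to force non-integrability.
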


Now we can obtain analyticity of $u^{(n)}$ for $n\ge1$.

\begin{theorem}\label{p3_th9}
If $u$ is analytic in $\bbD_R$ and nonvanishing on $\overline{\D}\setminus\{\pm1\}$ with at most simple zeros at $\pm1$, then the same is true of each $u^{(n)}$.
\end{theorem}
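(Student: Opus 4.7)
The plan is to prove this by induction on $n$, with the base case $n=0$ being the hypothesis. Assume $u^{(n)}$ has the stated properties; I will establish them for $u^{(n+1)}$. The two tools are the recursion \eqref{p3_eq41}, $u^{(n+1)}(z) = z^{-1} u^{(n)}(z) M^{(n)}(z) A_{n+1}$, and the reflection identity \eqref{p3_eq4.5},
\[
M^{(n)}(z) = {M^{(n)}}{}^\sharp(z) + (z - z^{-1})\bigl[{u^{(n)}}{}^\sharp(z)\, u^{(n)}(z)\bigr]^{-1},
\]
valid on $R^{-1} < |z| < R$.

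For analyticity of $u^{(n+1)}$ on $\bbD_R$, I would treat three regions. At $z=0$, the apparent pole from $z^{-1}$ cancels against $M^{(n)}(z) = z\bdone + O(z^2)$ from \eqref{p3_m-taylor}, giving the analytic and invertible value $u^{(n+1)}(0) = u^{(n)}(0) A_{n+1}$. Inside $\bbD\setminus\{0,\pm 1\}$ I argue $M^{(n)}$ has no poles: the strict Herglotz bound $\imm M^{(n)}(z)\gtrless\bdnot$ forces invertibility (hence analyticity) off the real line, and on $(-1,1)\setminus\{0\}$ the no-bound-state assumption on $\mu$, propagated inductively via the correspondence between zeros of $u^{(n)}$ in $\bbD\setminus\{\pm 1\}$ and atoms of $\mu^{(n)}$ outside $[-2,2]$, rules out poles of $M^{(n)}$ there. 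On the annulus $1<|z|<R$, both terms of \eqref{p3_eq4.5} are analytic away from $\pm 1$: ${M^{(n)}}{}^\sharp(z)=M^{(n)}(\bar z^{-1})^*$ because $\bar z^{-1}\in\bbD\setminus\{\pm 1\}$ where $M^{(n)}$ was just shown analytic, and $[{u^{(n)}}{}^\sharp(z) u^{(n)}(z)]^{-1}$ because the IH gives nonvanishing of $u^{(n)}$ on $\overline{\bbD}\setminus\{\pm 1\}$ (which, reflected, means ${u^{(n)}}{}^\sharp$ is nonvanishing in the annulus away from $\pm 1$).

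For the zero structure of $u^{(n+1)}$ on $\overline{\bbD}\setminus\{\pm 1\}$: invertibility of $u^{(n)}$ on $\overline{\bbD}\setminus\{\pm 1\}$ from the IH, together with invertibility of $M^{(n)}$ on $\overline{\bbD}\setminus\{0,\pm 1\}$ (as above, plus the boundary formula $\imm M^{(n)}(e^{i\theta})=\sin\theta\,[u^{(n)}(e^{i\theta})^* u^{(n)}(e^{i\theta})]^{-1}$ obtained directly from \eqref{p3_eq4.5} by specializing to $|z|=1$, which is nondegenerate for $\theta\ne 0,\pi$), forces $u^{(n+1)}$ to be invertible on $\overline{\bbD}\setminus\{0,\pm 1\}$. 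Invertibility at $z=0$ was already verified.

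The main obstacle is the analysis at $z=\pm 1$. By the IH and Lemma~\ref{p4_lmSM}, near $\pm 1$ we can write $u^{(n)}(z)=E(z)\diag((z\mp 1)I_k, I_{l-k})F(z)$ with $E,F$ holomorphic and invertible. The reflection identity \eqref{p3_eq4.5} then forces $M^{(n)}$ to carry a simple pole at $\pm 1$ whose residue matches up precisely with $u^{(n)}$'s degenerate directions: explicitly, $(z-z^{-1})[{u^{(n)}}{}^\sharp u^{(n)}]^{-1}$ has a simple pole whose principal part can be read off from the Smith--McMillan factors as $F(z)^{-1}\diag((z\mp 1)^{-1} R, \bdnot)E^{-\sharp}(z)^{-1}$-style, for an invertible $k\times k$ block $R$. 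Substituting into the recursion, the $(z\mp 1)^{-1}$ block in $F(z) M^{(n)}(z) A_{n+1}$ is exactly absorbed by the $(z\mp 1)$ factor in the $\diag$-part of $u^{(n)}$, so the Smith--McMillan exponents of $u^{(n+1)}$ at $\pm 1$ lie in $\{0,1\}$, giving analyticity with at most a simple zero. Carrying out this bookkeeping carefully --- verifying that the kernel/range of the residue of $M^{(n)}$ at $\pm 1$ align with those of $u^{(n)}(\pm 1)$ in precisely the way that cancels the $(z\mp 1)^{-1}$ against the vanishing rows --- is the crux of the argument.
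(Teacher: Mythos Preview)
Your inductive scheme and the tools you invoke (the recursion \eqref{p3_eq41} and the reflection identity \eqref{p3_eq4.5}) are exactly those of the paper, and your treatment of analyticity of $u^{(n+1)}$ at $0$, in $\bbD\setminus\{0,\pm1\}$, on $\partial\bbD\setminus\{\pm1\}$, and in the annulus is essentially correct. (A small imprecision: you assert both terms of \eqref{p3_eq4.5} are analytic on the annulus, but $u^{(n)}$ may have zeros there, so $[{u^{(n)}}^\sharp u^{(n)}]^{-1}$ can have poles; what is true, and what the paper uses, is that the \emph{product} $u^{(n)}M^{(n)}= u^{(n)}{M^{(n)}}^\sharp+(z-z^{-1})[{u^{(n)}}^\sharp]^{-1}$ is analytic there.) Your residue/kernel alignment at $\pm1$ also matches the paper's computation $T=-CC^*$, giving regularity of $u^{(n+1)}$ at $\pm1$.

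The genuine gap is your claim that the Smith--McMillan bookkeeping forces the exponents of $u^{(n+1)}$ at $\pm1$ to lie in $\{0,1\}$. The cancellation you describe only shows the exponents are $\ge 0$ (analyticity); it gives no upper bound. Concretely, $u^{(n+1)}=z^{-1}u^{(n)}M^{(n)}A_{n+1}$, and while $u^{(n)}$ has exponents in $\{0,1\}$ at $\pm1$, you have not controlled the nonnegative Smith--McMillan exponents of $M^{(n)}$ there: $M^{(n)}$ could vanish in some directions at $\pm1$, producing higher-order zeros of $u^{(n+1)}$. The paper does \emph{not} extract simplicity from this algebra; instead it invokes Lemma~\ref{p3_jost_boundary}, which uses the purely analytic fact that $f^{(n+1)}(2\cos\theta)=\pi^{-1}|\sin\theta|\,[u^{(n+1)}(e^{i\theta})^*u^{(n+1)}(e^{i\theta})]^{-1}$ is integrable against $|\sin\theta|\,d\theta$ (being a probability density), forcing $[u^{(n+1)}]^{-1}$ to have at most simple poles at $\pm1$. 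This integrability argument is the missing ingredient in your proposal; without it (or an equivalent bound on the zero order of $M^{(n)}$ at $\pm1$, which is precisely the paper's auxiliary hypotheses (e) and (g) and is itself proved via Lemma~\ref{p3_jost_boundary}), the induction does not close.
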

\begin{proof}
We use induction on $n$. The inductive hypothesis will be to assume
\begin{itemize}
\item[(a)] $u^{(n)}$ is analytic in $\bbD_R$,
\item[(b)] $u^{(n)}$ is invertible on $\overline{\D}\setminus\{\pm1\}$,
\item[(c)] $u^{(n)}$ has at most simple zeros at $\pm1$,
\item[(d)] $M^{(n)}$ has no poles in $\overline{\bbD}\setminus\{\pm1\}$,
\item[(e)] $M^{(n)}$ has at most simple poles at $\pm1$,
\item[(f)] $(M^{(n)})^{-1}$ has no poles in $\overline{\bbD}\setminus\{\pm1\}$,
\item[(g)] $(M^{(n-1)})^{-1}$ has at most simple poles at $\pm1$.
\end{itemize}
Let us check the base case $n=0$. (a)--(c) are given. That $M$ has no poles in $\bbD$ follows from the fact that $\mu$ has no eigenvalues outside $[-2,2]$, and no poles of $M$ on $\partial\bbD\setminus\{\pm1\}$ corresponds to the absence of the point spectrum in $(-2,2)$. Also, no point spectrum at $\pm2$ implies  $\lim_{\veps\downarrow 0} \veps \mathfrak{m}(\pm2+i\veps)=0$ which translates to $\lim_{z\to \pm1} (z\mp 1)^2 M(z)=0$. Thus we established (d) and (e).

Observe that $M$ cannot have zeros on $(-1,0)\cup(0,1)$ since this would correspond to $\int_{-2}^2 \frac{d\mu(x)}{x-z}$ being singular at some real $z$ with $|z|>2$. On $\{z\in\D\mid\imm z>0\}$ we have $\imm M(z)>\bdnot$, so $M$ is invertible. Same for $\{z\in\D\mid\imm z<0\}$. Finally, $M$ is also invertible on $\partial\D\setminus\{\pm1\}$ since $\imag M$ is invertible there by~(\ref{p3_eq34}). Thus $M^{-1}$ has no poles in $\overline{\bbD}\setminus\{\pm1\}$, i.e., (f) holds.

(g) is vacuous for $n=0$.



Now assume that (a)--(g) hold for $n$, and let us show they hold for $n+1$ as well. By (d) $M^{(n)}$ is meromorphic on $\bbD_R$ with poles possible only in $\{z\mid 1<|z|<R\}\cup\{\pm1\}$. Using
\begin{equation}\label{p3_eq43}
M^{(n)}(z)=M^{(n)}{}^\sharp(z)+(z-z^{-1})\left[u^{(n)}{}^\sharp(z)u^{(n)}(z)\right]^{-1}, \quad R^{-1}<|z|<R,
\end{equation}
we see the following:

\begin{itemize}
\item[(i)] $M^{(n)}$ has a pole at $z_k$, $1<|z_k|<R$, only if $u^{(n)}(z_k)$ is not invertible, since $u^{(n)}{}^\sharp(z_k)$ is invertible by (b) and $M^{(n)}{}^\sharp(z_k)$ is regular by (d). Then~(\ref{p3_eq41}) and~(\ref{p3_eq43}) imply
    \begin{equation*}
    u^{(n+1)}(z_k)=z_k^{-1}u^{(n)}(z_k)M^{(n)}{}^\sharp(z_k)A_{n+1}+(1-z_k^{-2})[u^{(n)}{}^\sharp(z_k)]^{-1}A_{n+1}
    \end{equation*}
    is regular.
\item[(ii)] Assume $M^{(n)}$ has a pole at $\pm 1$. By (c) and (e), $u^{(n)}$ and $M^{(n)}$ have at most order $1$ poles at $\pm 1$, so let
    \begin{align*}
    \res_{z=1}M^{(n)}(z)=T 
    \\
    \res_{z=1}u^{(n)}(z)^{-1}=C.
    \end{align*}
    From the definition of $M^{(n)}$, the matrix $T$ must be Hermitian. Easy to see,
    \begin{align*}
    \res_{z=1}M^{(n)}{}^\sharp(z)=-T^*=-T,\\
    \res_{z=1}u^{(n)}{}^\sharp(z)^{-1}=-C^*,
    \end{align*}
    and then computing residues of both sides of~(\ref{p3_eq43}) gives
    \begin{equation*}\label{p3_eq49}
    2T=-2CC^*.
    \end{equation*}
    Now, by~(\ref{p3_eq41}),
    \begin{equation*}
    \res_{z=1}u^{(n+1)}(z)= \lim_{z\to1}(z-1)u^{(n+1)}(z)=u^{(n)}(1)T A_{n+1}=- u^{(n)}(1)CC^* A_{n+1}=\mathbf{0},
    \end{equation*}
    since $\ran C=\ker u^{(n)}(1) $ (by Lemma \ref{p4_lm0}). Hence $u^{(n+1)}$ is regular at $z=\pm1$.
\end{itemize}

This proves part (a) of the inductive step.

$u^{(n+1)}$ is invertible on $\overline{\D}\setminus\{\pm1\}$ since $u^{(n)}$ is invertible and $(M^{(n)})^{-1}$ has no poles (by (b) and (f)). This establishes (b).

(c) is obtained in Lemma \ref{p3_jost_boundary}.

(d) for $n+1$ follows from \eqref{p3_eq42} and (f) for $n$.

(f) for $n+1$ follows by the exact same arguments as for $n=0$ before.

(g) follows from $M^{(n)}(z)^{-1} = z^{-1} A_{n+1} u^{(n+1)}(z)^{-1} u^{(n)}(z)$ and Lemma \ref{p3_jost_boundary}.

Finally, (e) follows from \eqref{p3_eq42} since we just established that $M^{(n)}(z)^{-1}$ has at most simple poles at $\pm1$.
\end{proof}

Note that $\esssup \mu=[-2,2]$ with $\det f(x)>0$ on $(-2,2)$, and so Denisov--Rakhmanov theorem (Lemma \ref{denisov}) implies that $\calJ$ is in the Nevai class. By Theorem \ref{p1_thm1} we obtain $A_n\to\bdone, B_n\to\bdnot$. This means that $\calJ^{(n)}$ converges in norm to the free block Jacobi matrix, which implies that resolvents converge:
\begin{equation}\label{p3_eq4.23} M^{(n)}(z)\to z\bdone \quad \mbox{  uniformly on compacts of }\bbD.\end{equation}

Now combine \eqref{p3_eq41} and \eqref{p3_eq4.5} to get
\begin{equation}\label{p3_eq4.24}
u^{(n+1)}(z)= (1-z^{-2}) ({u^{(n)}} {}^\sharp(z))^{-1} A_{n+1}+z^{-2} u^{(n)}(z) N_n^\sharp(z) A_{n+1},
\end{equation}
where $N_n(z)=M^{(n)}(z)/z$, $N_n^\sharp(z)=z M^\sharp(z)$.

Let us fix any $R_1$ with $1<R_1<R$. Given any $L^2(\bdone \frac{d\theta}{2\pi})$ function on $R_1\partial\bbD$, define
\begin{equation*}
|||f|||_{R_1}=\left( \int_{-\pi}^\pi \left|\left| (P_+f)(R_1 e^{i\theta})\right|\right|^2 \dt \right)^{1/2},
\end{equation*}
where $P_+$ is the projection in $L^2(\bdone \frac{d\theta}{2\pi})$ onto $\{e^{in\theta}\}_{n=1}^\infty$, and $||\cdot||$ is the Hilbert-Schmidt norm till the end of this section. In particular, if $f$ is analytic in $\bbD_R$,
$$
|||f|||_{R_1}=\left( \int_{-\pi}^\pi \left|\left| f(R_1 e^{i\theta})-f(0)\right|\right|^2 \dt \right)^{1/2}.
$$

Now note that since $(u^{(n)} {}^\sharp)^{-1}$ is analytic in $(\bbC\cup\{\infty\})\setminus \bbD$, $P_+((1-z^{-2}) ({u^{(n)}} {}^\sharp(z))^{-1} A_{n+1})=\bdnot$. For the same reasons, $P_+(z^{-2} u^{(n)}(0) N_n^\sharp(z) A_{n+1})=\bdnot$. Thus
\begin{equation*}
P_+(u^{(n+1)})=P_+\left(z^{-2} (u^{(n)}(z)-u^{(n)}(0)) N_n^\sharp(z) \right)A_{n+1}.
\end{equation*}

Since $P_+$ is a projection on $L^2$, using submultiplicativity of the Hilbert-Schmidt norm we get
\begin{equation*}
|||u^{(n+1)}|||_{R_1}\le R_1^{-2} |||u^{(n)}|||_{R_1}  \,||A_{n+1}||  \sup_{|z|=R_1} ||N_n^\sharp(z)||,
\end{equation*}
which by induction gives
\begin{equation}\label{p3_eq4.28}
|||u^{(n+1)}|||_{R_1}\le R_1^{-2n} |||u|||_{R_1}  \,\left[ \prod_{j=1}^{n}||A_{j+1}||  \sup_{|z|=R_1} ||N_j^\sharp(z)|| \right].
\end{equation}

Now since $||A_j||\to\bdone$ and $\sup_{|z|=R_1} ||N_j^\sharp(z)||\le \sup_{|z|\le R^{-1}_1} ||M^{(j)}(z)/z||\to \bdone$ by \eqref{p3_eq4.23}, we get that for any $\veps>0$ there exists a constant $c_\veps$ such that
$$\left[ \prod_{j=1}^{n}||A_{j+1}||  \sup_{|z|=R^{-1}_1} ||N_j(z)|| \right]\le c_\veps (1+\veps)^{2n},$$
and so
\begin{equation}\label{p3_eq4.29}
|||u^{(n+1)}|||_{R_1}\le C_\veps (R_1-\veps)^{-2n}
\end{equation}
for some new constant $C_\veps$.

\begin{proof}[Proof of Theorem~\ref{p3_th7}]
Since $u$ is a polynomial, then taking $n$ and $R_1$ sufficiently large in \eqref{p3_eq4.28}, one can see that $|||u^{(n)}|||_{R_1}=0$, which implies $u^{(n)}(z)=u^{(n)}(0)$. Then by the condition (iii) of the theorem, $u^{(n)}(z)=1$, and so $f^{(n)}(2\cos\theta)=\pi^{-1}|\sin\theta|$ is free, that is, $\mathbf{1}-A_nA_n^*=B_n=\mathbf{0}$ for all large $n$.
\end{proof}

\begin{proof}[Proof of Theorem~\ref{p3_th8}]
Define $s_n(z)=u^{(n)}(z)u^{(n)}(0)^{-1}-\bdone$. Note that by Szeg\H{o} asymptotics (Theorem \cite[Thm 1]{K_sz}), the limit $z^n \mathfrak{p}_n(z+z^{-1})$ exists. In particular at $z=0$ this gives that there exists $\lim_{n\to\infty} A_1\ldots A_n\equiv K$, with $K$ invertible. Then $u^{(n)}(0)=u(0)A_1\ldots A_n\to u(0)K$ is bounded in norm from above and below away from $0$. Then
$$
|||s_n|||_{R_1}\le |||u^{(n)}|||_{R_1} \, ||u^{(n)}(0)^{-1}|| \le C_\veps (R_1-\veps)^{-2n}
$$
for some new constant $C_\veps$. Using Cauchy formula, one easily obtains from this
\begin{equation}\label{p3_eq4.30}
||s_n(z)||\le \widetilde{C}_\veps (R_1-\veps)^{-2n} \quad \mbox{uniformly in } \bbD_{R_1-2\veps}.
\end{equation}

Now note that by \eqref{p3_eq41} $$\frac{M^{(n)}(z)}z=u^{(n)}(z)^{-1} u^{(n+1)}(z) A_{n+1}^{-1}=u^{(n)}(0)^{-1}(\bdone+s_n(z))^{-1} (\bdone+s_{n+1}(z)) u^{(n)}(0) ,$$
and so
\begin{multline*}
\sup_{|z|\le1/2} \left|\left| \frac{M^{(n)}(z)}{z}-\bdone\right|\right| \le
\sup_{|z|\le1/2} \left|\left| u^{(n)}(0)^{-1}(\bdone+s_n(z))^{-1}  u^{(n)}(0) -\bdone\right|\right|
\\ +\sup_{|z|\le1/2} \left|\left| u^{(n)}(0)^{-1}(\bdone+s_n(z))^{-1} s_{n+1}(z) u^{(n)}(0) \right|\right|.
\end{multline*}
The second term can be made exponentially small simply by using \eqref{p3_eq4.30}, while the first is
\begin{equation*}
\begin{aligned}
\left|\left| u^{(n)}(0)^{-1}(\bdone+s_n(z))^{-1}  u^{(n)}(0) -\bdone\right|\right| &=
\left|\left| u^{(n)}(0)^{-1}\sum_{j=0}^\infty s_n(z)^j  u^{(n)}(0) -\bdone\right|\right| \\
&= \left|\left| u^{(n)}(0)^{-1}\sum_{j=1}^\infty s_n(z)^j  u^{(n)}(0) \right|\right| \\&\le
||u^{(n)}(0)^{-1}||\, ||u^{(n)}(0)|| \frac{||s_n(z)||}{1-||s_n(z)||}
\end{aligned}
\end{equation*}
which is also uniformly exponentially small. Thus
\begin{equation*}
\sup_{|z|\le1/2} \left|\left| \frac{M^{(n)}(z)}{z}-\bdone\right|\right| \le  \widehat{C}_\veps (R_1-\veps)^{-2n}.
\end{equation*}
Using this, \eqref{p3_m-taylor}, and the Cauchy formula, we obtain
\begin{equation*}
||B_n||+||\mathbf{1}-A_nA_n^*|| \le \widehat{C}_\veps (R_1-\veps)^{-2n}.
\end{equation*}
Since $R_1<R$ and $\veps>0$ were arbitrary, we obtain \eqref{p3_eq4.10}.
\end{proof}

Note that instead of $1/2$ we could have taken any constant smaller than $R_1-\veps$ here. Therefore we have shown that $M^{(n)}(z)\to z\bdone$ uniformly on compacts of $\bbD_R$.

\subsection{Proof of Theorems \ref{p3_th10} and \ref{p3_th11} for the general case}\label{p3_sub33}
Recall Definition \ref{canonic} of canonical weight: $w_j$ is canonical if
\begin{equation}\label{one}
\widetilde{w}_j \, u(1/{\bar{z}_j})^*=-(z_j-z_j^{-1})\lim_{z\to z_j}(z-z_j)u(z)^{-1},
\end{equation}
where as before $w_j=(z_j^{-1}-z_j)z_j^{-1} \widetilde{w}_j$.
As clear from the calculation in Theorem \ref{p3_th5}(ix), the weight is canonical if and only if $u^{(1)}(z)$ is regular at $z_j^{-1}$.

\begin{lemma}\label{p3_lm9}
Assume $u(z)$ and $u^{(1)}(z)$ are analytic in $\bbD_R$. Then for any $n\ge2$, $u^{(n)}(z)$ is analytic in $\bbD_R$.
\begin{proof}
Note that part (vii) of Theorem \ref{p3_th5} can be proved using only \eqref{p3_eq22} and \eqref{p3_m-recur}. Therefore \eqref{p3_eq41} and \eqref{p3_eq42} allow us to conclude that
\begin{equation*}
u^{(n+2)}(z)
=z^{-1} u^{(n+1)}(z) A_{n+1}^{-1}\left((z+z^{-1})\bdone-B_{n+1}\right) A_{n+1}^*{}^{-1} A_{n+2}-
z^{-2} u^{(n)}(z)  {A_{n+1}^*}^{-1} A_{n+2},
\end{equation*}
which proves our statement (easy to see that $z=0$ in fact is not causing any troubles here).
\end{proof}
\end{lemma}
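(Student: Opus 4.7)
The plan is to produce a three-term recurrence for the sequence $(u^{(n)})_{n\ge 0}$ that expresses $u^{(n+2)}$ in terms of $u^{(n+1)}$ and $u^{(n)}$, together with the Jacobi parameters and rational factors in $z$, and then run a straightforward induction. First I would eliminate $M^{(n)}$ between \eqref{p3_eq41} and \eqref{p3_eq42}: from \eqref{p3_eq41} one reads $M^{(n)}(z)=z\,u^{(n)}(z)^{-1}u^{(n+1)}(z)A_{n+1}^{-1}$, and substituting this into \eqref{p3_eq42} (and using \eqref{p3_eq41} once more to express $u^{(n+2)}$ via $M^{(n+1)}$) produces the shifted version of the identity in Theorem~\ref{p3_th5}(vii):
\begin{equation*}
u^{(n+2)}(z)=z^{-1}u^{(n+1)}(z)A_{n+1}^{-1}\bigl((z+z^{-1})\mathbf{1}-B_{n+1}\bigr)A_{n+1}^{*-1}A_{n+2}-z^{-2}u^{(n)}(z)A_{n+1}^{*-1}A_{n+2}.
\end{equation*}
This derivation is purely algebraic; it uses only the defining relations and in particular does not presuppose analyticity of $M^{(n)}$ anywhere.

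With the recurrence in hand, I induct on $n$. The base case $n=2$ follows from the hypothesis on $u=u^{(0)}$ and $u^{(1)}$, since the coefficients on the right are analytic in $\bbD_R\setminus\{0\}$; hence $u^{(2)}$ is analytic there. The inductive step is identical: assuming $u^{(n)}$ and $u^{(n+1)}$ analytic in $\bbD_R$, the recurrence forces $u^{(n+2)}$ to be analytic in $\bbD_R\setminus\{0\}$.

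It remains to dispose of the apparent pole at $z=0$ coming from the $z^{-1}$ and $z^{-2}$ prefactors. Here I would argue separately that each $u^{(n)}$ is automatically analytic at $z=0$: the function $M^{(n)}(z)=-\mathfrak{m}^{(n)}(z+z^{-1})$ is analytic at $z=0$ because $\mathfrak{m}^{(n)}$ is analytic at $\infty$ (the spectrum of $\calJ^{(n)}$ is bounded), and \eqref{p3_m-taylor} gives $M^{(n)}(z)/z=\mathbf{1}+O(z)$. Therefore \eqref{p3_eq41} yields $u^{(n+1)}(z)=u^{(n)}(z)\bigl(\mathbf{1}+O(z)\bigr)A_{n+1}$ near $z=0$, so analyticity at $0$ propagates from $u^{(0)}=u$ to every $u^{(n)}$. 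Combining this with analyticity in $\bbD_R\setminus\{0\}$ obtained from the recurrence, the singularity of $u^{(n+2)}$ at $z=0$ is removable, and we conclude that $u^{(n+2)}$ is analytic in all of $\bbD_R$.

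The only mild obstacle is precisely the interplay between the two arguments: the three-term recurrence alone controls the region $\bbD_R\setminus\{0\}$, and one must invoke the $m$-function interpretation to see that the $z^{-2}$ singularity in the recurrence is spurious (i.e.\ the two terms on the right must have cancelling Laurent tails at $0$). Once this is recognized, both pieces fit together and the induction goes through cleanly for all $n\ge 2$.
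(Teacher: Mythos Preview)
Your argument is correct and is essentially the paper's own proof: you derive the same three-term recurrence (the shifted version of Theorem~\ref{p3_th5}(vii)) purely from \eqref{p3_eq41}--\eqref{p3_eq42} and induct, with the $z=0$ issue handled via $M^{(n)}(z)/z=\bdone+O(z)$. The only difference is that you spell out the elimination of $M^{(n)}$ and the $z=0$ argument more explicitly than the paper does.
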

\begin{remark}
What this lemma says is that if all the weights of $u$ are canonical, then they are automatically canonical for every $u^{(n)}$.
\end{remark}

For the inductive step in this case we will need the following result.

\begin{lemma}\label{p3_lm10}
If $u$ and $M$ satisfy
\begin{itemize}
\item[(a)]
$\ker u(\xi)=\ran \res_{z=\xi} M(z)$ for all $\xi\in\bbD$;
\item[(b)] all poles of $u^{-1}$ in $\overline{\bbD}\cap\bbR$ are simple,
\end{itemize}
 then the same is true for all $u^{(n)}$ and $M^{(n)}$.
\begin{proof}
Assume both conditions hold for $u^{(n)}$ and $M^{(n)}$.

Take any $\xi\in\bbD$. Note that in the Smith--McMillan form (Lemma \ref{p4_lmSM}) of $u^{(n)}$ at $z=\xi$ each power $\kappa_j$ of $(z-\xi)^{\kappa_j}$ must be $0$ or $1$ by (b). Thus
\begin{equation*}
u^{(n)}(z)=E(z) \left( \begin{array}  {cc} (z-\xi) \bdone_s & \bdnot \\ \bdnot & \bdone_{l-s}
\end{array}\right)
 F(z),
\end{equation*}
where $\bdone_j$ is the $j\times j$ identity matrix. Now since $M^{(n+1)}$ can have only first order poles in $\bbD$, it means that $M^{(n)}$ can have only first order zeros/poles in $\bbD$. Then the Smith--McMillan form of  $(M^{(n)})^{-1}$ at $\xi$ is
\begin{equation*}
M^{(n)}(z)^{-1}=G(z) \left( \begin{array}  {ccc} (z-\xi) \bdone_p & \bdnot &\bdnot \\ \bdnot & \bdone_{q} &\bdnot \\ \bdnot &\bdnot &\frac{1}{z-\xi}\bdone_{l-p-q}
\end{array}\right) H(z).
\end{equation*}

Observe that $E(z), F(z), G(z), H(z)$ are analytic and invertible in a neighborhood of $\xi$.

Now note that $$\ker u^{(n)}(\xi)=F(\xi)^{-1}\spann\{\delta_{1},\ldots,\delta_s\},$$ and $$\ran\res_{z=\xi} M^{(n)}(z)=H(\xi)^{-1}\spann\{\delta_1,\ldots,\delta_p\}.$$ Then the condition (a) implies that $s=p$, and that $\spann\{\delta_{1},\ldots,\delta_p\}$ is an invariant subspace of the matrix $V\equiv H(\xi)F(\xi)^{-1}$. Thus
\begin{equation*}
V=\left( \begin{array}  {cc} V_{11} & V_{12} \\ \bdnot & V_{22}
\end{array}\right),
\end{equation*}
where $V_{11}$ is an (invertible) $p\times p$ matrix, $V_{22}$ is an (invertible) $(l-p) \times (l-p)$ matrix, and $V_{12}$ is an $s\times(l-p)$ matrix.

By (a) $u^{(n+1)}(z)$ is analytic at $\xi$. Now consider $u^{(n+1)}(z)^{-1}$ at $z=\xi$. We want to show the following limit is finite:
\begin{multline}\label{p3_eq4.40}
\lim_{z\to\xi} (z-\xi) u^{(n+1)}(z)^{-1}=A_{n+1}^{-1} \lim_{z\to\xi} (z-\xi)  M^{(n)}(z)^{-1} u^{(n)}(z)^{-1} \\
=A_{n+1}^{-1} G(\xi) \lim_{z\to\xi} (z-\xi) \left( \begin{array}  {ccc} (z-\xi) \bdone_p & \bdnot &\bdnot \\ \bdnot & \bdone_{q} &\bdnot \\ \bdnot &\bdnot &\frac{1}{z-\xi}\bdone_{l-p-q}
\end{array}\right) V
\left( \begin{array}  {ccc} \frac{1}{z-\xi} \bdone_p & \bdnot &\bdnot \\ \bdnot & \bdone_{q} &\bdnot \\ \bdnot &\bdnot &\bdone_{l-p-q}
\end{array}\right)
 E(\xi)^{-1}.
\end{multline}
But
$$
\left( \begin{array}  {cc} (z-\xi)\bdone_p & \bdnot \\ \bdnot & \bdone_{l-p}
\end{array}\right)
\left( \begin{array}  {cc} V_{11} & V_{12} \\ \bdnot & V_{22}
\end{array}\right)
\left( \begin{array}  {cc} \frac{1}{z-\xi}\bdone_p & \bdnot \\ \bdnot & \bdone_{l-p}
\end{array}\right)
=
\left( \begin{array}  {cc} V_{11} & (z-\xi) V_{12} \\ \bdnot & V_{22}
\end{array}\right),
$$
which means that the right-hand side of \eqref{p3_eq4.40} is equal to
\begin{multline}\label{p3_eq4.41}
A_{n+1}^{-1} G(\xi) \lim_{z\to\xi} (z-\xi) \left( \begin{array}  {ccc} \bdone_p & \bdnot &\bdnot \\ \bdnot & \bdone_{q} &\bdnot \\ \bdnot &\bdnot &\frac{1}{z-\xi}\bdone_{l-p-q}
\end{array}\right) \widetilde{V}
\left( \begin{array}  {ccc} \bdone_p & \bdnot &\bdnot \\ \bdnot & \bdone_{q} &\bdnot \\ \bdnot &\bdnot &\bdone_{l-p-q}
\end{array}\right)
 E(\xi)^{-1} \\
 = A_{n+1}^{-1} G(\xi) \left( \begin{array}  {ccc} \bdnot_p & \bdnot &\bdnot \\ \bdnot & \bdnot_{q} &\bdnot \\ \bdnot &\bdnot &\bdone_{l-p-q}
\end{array}\right) \widetilde{V}
 E(\xi)^{-1} ,
\end{multline}
where $\widetilde{V}=\left( \begin{array}  {cc} V_{11} & \bdnot \\ \bdnot & V_{22}
\end{array}\right).$ This establishes (b) for $u^{(n+1)}$ for $\xi\in\bbD\cap \bbR$. The fact that $\pm1$ is at most first order pole of $(u^{(n+1)})^{-1}$ is already proved in Lemma \ref{p3_jost_boundary}.

To show that (a) holds for $u^{(n+1)}$, note that by Lemma \ref{p4_lm0} (which applies since we already know that $(u^{(n+1)})^{-1}$ has at most simple pole),
$$\ker u^{(n+1)}(\xi)=\ker u^{(n)}(\xi) M^{(n)}(\xi) A_{n+1}= \ran \res_{z=\xi} A_{n+1}^{-1} \left(M^{(n)}(z)^{-1} u^{(n)}(z)^{-1}\right),$$
and by \eqref{p3_eq42},
$$ \ran\res_{z=\xi} M^{(n+1)}(z)= \ran \res_{z=\xi} A_{n+1}^{-1} M^{(n)}(z)^{-1}.$$

By the calculations \eqref{p3_eq4.40}--\eqref{p3_eq4.41} above, it is easy to see that both of these spaces are equal to
\begin{equation*}
\ran A_{n+1}^{-1} G(\xi) \left( \begin{array}  {ccc} \bdnot_p & \bdnot &\bdnot \\ \bdnot & \bdnot_{q} &\bdnot \\ \bdnot &\bdnot &\bdone_{l-p-q}
\end{array}\right).
\end{equation*}
\end{proof}
\end{lemma}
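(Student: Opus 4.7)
The plan is to induct on $n$: the base case $n=0$ is the hypothesis, and we propagate (a) and (b) from level $n$ to level $n+1$ via the two recursions \eqref{p3_eq41} and \eqref{p3_eq42}.

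Fix $\xi \in \bbD$ and work locally. The main tool will be the Smith--McMillan form (Lemma \ref{p4_lmSM}) of $u^{(n)}$ and of $M^{(n)}(z)^{-1}$ at $\xi$. Hypothesis (b) at level $n$ restricts the exponents of $u^{(n)}$ at $\xi$ to $\{0,1\}$, so
$$u^{(n)}(z) = E(z)\, \diag\bigl((z-\xi)\bdone_s,\ \bdone_{l-s}\bigr)\, F(z), \qquad s = \dim \ker u^{(n)}(\xi),$$
with $E, F$ analytic and invertible near $\xi$. A parallel statement must be carried along the induction for $M^{(n)}$: its poles are first-order by the Herglotz nature of the spectral measure of $\calJ^{(n)}$, and its zeros are first-order because, via \eqref{p3_eq42}, they correspond to poles of $M^{(n+1)}$. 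Hence
$$M^{(n)}(z)^{-1} = G(z)\, \diag\bigl((z-\xi)\bdone_p,\ \bdone_q,\ (z-\xi)^{-1}\bdone_{l-p-q}\bigr)\, H(z),$$
with $G, H$ analytic and invertible near $\xi$.

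Reading off both sides of (a) at level $n$ in these bases gives $\ker u^{(n)}(\xi) = F(\xi)^{-1}\,\spann\{\delta_1,\ldots,\delta_s\}$ and $\ran \res_{z=\xi} M^{(n)}(z) = H(\xi)^{-1}\,\spann\{\delta_1,\ldots,\delta_p\}$; their equality forces $s = p$ and forces $V := H(\xi) F(\xi)^{-1}$ to be block upper-triangular in the $(p, l-p)$ column partition. Substituting the two Smith--McMillan forms into $u^{(n+1)}(z)^{-1} = z A_{n+1}^{-1} M^{(n)}(z)^{-1} u^{(n)}(z)^{-1}$, the local singularity at $\xi$ is encoded in the product of the two central diagonal matrices separated by $V$. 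A direct block multiplication, using the upper-triangularity of $V$, shows that the $(z-\xi)^{-2}$ contribution cancels, leaving at most a simple pole. This gives (b) at level $n+1$ for interior $\xi$; the boundary cases $\xi = \pm 1$ are handled by Lemma \ref{p3_jost_boundary}. The same computation identifies $\ran \res_{z=\xi}(u^{(n+1)})^{-1}$ with $A_{n+1}^{-1} G(\xi)\,\spann\{\delta_{p+q+1},\ldots,\delta_l\}$; comparing with the singular part of $M^{(n+1)}$ extracted from \eqref{p3_eq42} (which is $-A_{n+1}^{-1} M^{(n)}(z)^{-1} A_{n+1}^{-*}$) yields the same subspace for $\ran \res_{z=\xi} M^{(n+1)}(z)$. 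Combined with Lemma \ref{p4_lm0}, this gives (a) at level $n+1$.

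The main obstacle is the bookkeeping in the block matrix multiplication, and in particular the verification that the trailing $(l-p-q) \times (l-p)$ submatrix of $V$ is surjective onto $\bbC^{l-p-q}$, which is needed to identify the residue's range with the full subspace $\spann\{\delta_{p+q+1},\ldots,\delta_l\}$ rather than a proper one. This surjectivity follows from invertibility of $V$ together with its block upper-triangular form, since a left nullvector supported in the last $l-p-q$ coordinates would otherwise produce a left nullvector of $V$ itself.
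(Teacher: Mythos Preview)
Your proof is correct and follows essentially the same route as the paper: induction on $n$, local Smith--McMillan forms for $u^{(n)}$ and $M^{(n)}(z)^{-1}$ at $\xi$, the deduction $s=p$ and block upper-triangularity of $V=H(\xi)F(\xi)^{-1}$ from (a), the block multiplication showing the $(z-\xi)^{-2}$ term cancels, Lemma \ref{p3_jost_boundary} for $\xi=\pm1$, and then Lemma \ref{p4_lm0} together with \eqref{p3_eq42} to identify both $\ker u^{(n+1)}(\xi)$ and $\ran\res_{z=\xi}M^{(n+1)}(z)$ with $A_{n+1}^{-1}G(\xi)\,\spann\{\delta_{p+q+1},\ldots,\delta_l\}$. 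Your final paragraph on the surjectivity of the trailing block is a point the paper leaves to the reader (it just says ``it is easy to see''); your argument via a left nullvector is fine, and in fact once one passes to $\widetilde V=\diag(V_{11},V_{22})$ as in the paper's computation the surjectivity is immediate from the invertibility of $V_{22}$.
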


%

This gives us the analogue of Theorem \ref{p3_th9}:

\begin{lemma}\label{inductive}
If $u$ is analytic in $\bbD_R$, satisfies (a)--(b) of Lemma \ref{p3_lm10}, and all the weights with $1>|z_j|>R^{-1}$ are canonical, then the same is true of each $u^{(n)}$.
\begin{proof}
The arguments of Theorem \ref{p3_th9}, together with the result of Lemma \ref{p3_lm10}, give the result. Note that condition (a) ensures analyticity of $u^{(1)}$ at $z_j$, and canonic weights ensure analyticity of $u^{(1)}$ at $z_j^{-1}$. The weights for $u^{(n)}$ for $n\ge1$ are canonical by Lemma \ref{p3_lm9}.
\end{proof}
\end{lemma}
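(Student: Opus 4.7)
The plan is to establish the three required conditions for every $u^{(n)}$—analyticity in $\bbD_R$, the ker/residue identity (a) of Lemma \ref{p3_lm10}, and the canonical weight property—by combining Lemma \ref{p3_lm10}, Lemma \ref{p3_lm9}, and the very definition of canonical weight. The induction engine is on $n$, but once the base case $n=1$ is handled the other steps cascade almost tautologically.

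First I would verify directly that $u^{(1)}$ is analytic in $\bbD_R$. From the recursion $u^{(1)}(z) = z^{-1} u(z) M(z) A_1$ together with the meromorphic extension $M(z) = M^\sharp(z) + (z-z^{-1})[u^\sharp(z) u(z)]^{-1}$ valid in $R^{-1} < |z| < R$, the candidate singularities of $u^{(1)}$ inside $\bbD_R$ are: the origin (harmless since $M(z)/z$ is regular at $0$); the zeros $z_j$ of $u$ in $\bbD$ (simple poles of $M$ via $u^{-1}$, cancelled by hypothesis (a) through $u(z_j)\cdot\res_{z=z_j} M(z) = \bdnot$); and the reflected points $z_j^{-1}$ lying in the annulus $1 < |z| < R$ (simple poles of $M$ coming from $(u^\sharp)^{-1}$). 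The last set is precisely the reflections of those $z_j$ with $1 > |z_j| > R^{-1}$, and analyticity of $u^{(1)}$ at these points is exactly the canonical weight hypothesis; any zero with $|z_j| \le R^{-1}$ has $|z_j^{-1}| \ge R$ and lies outside the disk.

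Once $u^{(1)}$ is analytic in $\bbD_R$, Lemma \ref{p3_lm9} immediately delivers analyticity of $u^{(n)}$ in $\bbD_R$ for every $n \ge 2$. Applying Lemma \ref{p3_lm10} inductively then propagates conditions (a) and (b) from $(u^{(n)}, M^{(n)})$ to $(u^{(n+1)}, M^{(n+1)})$ for every $n$. The canonical weight condition for each $u^{(n)}$ with $n \ge 1$ now comes for free: by its very definition, a weight of $u^{(n)}$ at a zero $z_j' \in (R^{-1},1)$ is canonical if and only if $u^{(n+1)}$ is analytic at $(z_j')^{-1}$, and we have just shown that $u^{(n+1)}$ is analytic throughout $\bbD_R$.

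The main obstacle is conceptual rather than technical: one must notice that the canonical weight condition is precisely the cancellation required for the once-stripped Jost function to continue analytically through the reflected point. Once this identification is made, the analyticity cascade initiated by the case $n=1$ automatically carries the canonical property along with it, and no separate bookkeeping is needed.
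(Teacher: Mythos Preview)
Your approach matches the paper's: condition (a) for analyticity of $u^{(1)}$ at the $z_j$'s, canonical weights for analyticity at the $z_j^{-1}$'s, then Lemma~\ref{p3_lm9} to cascade analyticity and Lemma~\ref{p3_lm10} to propagate (a)--(b). One omission: your list of candidate singularities of $u^{(1)}$ skips $\pm 1$, where $M$ can have a simple pole whenever $u(\pm 1)$ is singular; the required cancellation there is the residue computation in part (ii) of the proof of Theorem~\ref{p3_th9}, which is why the paper explicitly invokes ``the arguments of Theorem~\ref{p3_th9}'' rather than only Lemmas~\ref{p3_lm9} and~\ref{p3_lm10}.
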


\begin{proof}[Proof of Theorem~\ref{p3_th10}]
If some of the weights are not canonical then $u^{(1)}$ is not entire, and so $\mathbf{1}-A_nA_n^*=B_n=\mathbf{0}$ cannot hold for all large $n$.

Now assume all the weights are canonical. 
Then all $u^{(n)}$'s are entire by Lemma \ref{inductive}. For $r$ sufficiently large, \eqref{p3_eq4.24} implies
\begin{equation*}
\sup_{|z|\le r} ||u^{(n+1)}(z)|| \le O(1) \left( 1+r^{-2} \sup_{|z|\le r} ||u^{(n)}(z)||\right),
\end{equation*}
which inductively shows that if $u$ is a polynomial then $u^{(n)}$ is a polynomial with
$$\deg u^{(n)}\le \max\{0,\deg u-2n\}.$$
Then $u^{(n)}$ is a constant for some large $n$. By Lemma \ref{p3_lm10}, $M^{(n)}$ has no poles, and so \eqref{p3_eq4.5} implies that $u^{(n)}$ satisfies the condition (iii) of Theorem \ref{p3_th7} (as well as conditions (i) and (ii), of course). This implies $\mathbf{1}-A_nA_n^*=B_n=\mathbf{0}$ for all large $n$.
\end{proof}

\begin{proof}[Proof of Theorem~\ref{p3_th11}]
If some of the weights with $1>|z_j|>R^{-1}$ are not canonical then $u^{(1)}$ is not analytic at $\{z_j^{-1}\}$, and so $\limsup_{n\to\infty}\left(||B_n||+||\mathbf{1}-A_nA_n^*||\right)^{1/2n}\le R^{-1}$ cannot hold.

Assume now that all the weights with $1>|z_j|>R^{-1}$ are canonical. Then all $u^{(n)}$'s are entire by Lemma \ref{inductive}.

Now let us fix $R_1$ and $R_2$ with $1<R_2<R_1<R$. By Lemma \ref{stripping2} there exists $n_0$ such that zeros of $u^{(n)}$ in $\bbD$ all lie in $\{z\in\bbC : R_2^{-1}<|z|<1\}$ for every $n\ge n_0$. This means that $(u^{(n)}{}^\sharp)^{-1}$ and $N_n^\sharp$ are analytic in $(\bbC\cup\{\infty\})\setminus \bbD_{R_2}$, where $N_n$ is as before $ M^{(n)}(z)/z$. Now the arguments after \eqref{p3_eq4.28} work without changes and prove that \eqref{p3_eq4.29} holds. This estimate was the only ingredient that was used in the proof of Theorem \ref{p3_th8}. This proves Theorem~\ref{p3_th11} for the general case.
%
\end{proof}

\subsection{Results in terms of the perturbation determinant}\label{p3_sub34}
Assuming the Jost function exists, define the \textbf{perturbation determinant} by
\begin{equation*}
L(z)= u(z) u(0)^{-1}.
\end{equation*}
Clearly, $L(0)=\mathbf{1}$.
Note that by \eqref{p3_eq22} and $u^{(n)}(0)\to \mathbf{1}$ (see \eqref{p3_eq4.29}) we have
\begin{equation*}
u(0)=\stackrel{\curvearrowleft}{\prod_{n=1}^n}A_n^{-1}.
\end{equation*}

We can reformulate Theorems~\ref{p3_th7} and~\ref{p3_th8} as follows.

\begin{theorem}\label{p3_th12}
Let $L(z)$ be a polynomial obeying~\eqref{eq:symmetry} and
\begin{itemize}
\item[(i)] $L(z)$ is invertible on $\overline{\D}\setminus \{\pm1\}$;
\item[(ii)] if $\pm1$ are zeros, they are simple;
\item[(iii)] $L(0)=\mathbf{1}$.
\end{itemize}
Then $L$ is the perturbation determinant for some Jacobi matrix (asymptotic to type $1$), and each such matrix obeys $\mathbf{1}-A_nA_n^*=B_n=\mathbf{0}$ for all large $n$.
\end{theorem}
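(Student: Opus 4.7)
The plan is to reduce Theorem \ref{p3_th12} to Theorem \ref{p3_th7} by a simple renormalization. Given a polynomial $L$ satisfying (i)--(iii), I would like to find a polynomial $u$ that satisfies the hypotheses of Theorem \ref{p3_th7} and has $L(z)=u(z)u(0)^{-1}$. The only freedom I have is to multiply $L$ by a constant matrix on the right, so I would set $u(z)=L(z)\,T^{1/2}$ for some positive definite Hermitian matrix $T$ to be determined.

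First I would show that the matrix
\[
T \;=\; \frac{2}{\pi}\int_{0}^{\pi}\sin^{2}\theta\,\bigl[L(e^{i\theta})^{*}L(e^{i\theta})\bigr]^{-1}\,d\theta
\]
is well-defined and strictly positive. The integrand is nonnegative Hermitian; on $\partial\bbD\setminus\{\pm1\}$ it is finite because $L(e^{i\theta})$ is invertible by (i); and near $\pm1$ the weight $\sin^{2}\theta$ vanishes to order two, while $L^{-1}$ blows up at worst to order one by (ii), so the integrand stays bounded. Positive-definiteness follows because $L(e^{i\theta})^{-1}$ cannot vanish on any set of positive measure (being meromorphic), so the integral of this positive Hermitian-valued function is strictly positive.

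With $T^{1/2}$ in hand, I would take $u(z)=L(z)T^{1/2}$ and verify the three hypotheses of Theorem \ref{p3_th7}. Property (i) (nondegeneracy on $\overline{\D}\setminus\{\pm1\}$) is inherited from $L$ since $T^{1/2}$ is invertible; property (ii) (simple zeros at $\pm1$) similarly transfers; for property (iii), a direct computation gives
\[
\frac{2}{\pi}\int_{0}^{\pi}\!\sin^{2}\theta\,\bigl[u(e^{i\theta})^{*}u(e^{i\theta})\bigr]^{-1}d\theta = T^{-1/2}\left(\frac{2}{\pi}\int_{0}^{\pi}\!\sin^{2}\theta\,[L^{*}L]^{-1}d\theta\right)T^{-1/2}= T^{-1/2}TT^{-1/2}=\mathbf{1}.
\]

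Applying Theorem \ref{p3_th7} then produces a block Jacobi matrix $\calJ$ (necessarily of type asymptotic to $1$, as already noted in Theorem \ref{p3_th_constr}) whose Jost function is exactly $u$ and whose parameters satisfy $\mathbf{1}-A_{n}A_{n}^{*}=B_{n}=\mathbf{0}$ for all large $n$. Because $L(0)=\mathbf{1}$ implies $u(0)=T^{1/2}$, the identity $L(z)=u(z)u(0)^{-1}$ holds by construction, so $L$ is indeed the perturbation determinant of $\calJ$. There is no real obstacle in this argument; the only point to be careful about is confirming convergence and positivity of the integral defining $T$, which is exactly where hypotheses (i) and (ii) are used.
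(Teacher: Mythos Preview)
Your proposal is correct and follows essentially the same route as the paper: define $H=\frac{2}{\pi}\int_{0}^{\pi}\sin^{2}\theta\,[L(e^{i\theta})^{*}L(e^{i\theta})]^{-1}\,d\theta$, set $u(z)=L(z)H^{1/2}$ (the paper also allows an extra unitary factor $\sigma$ on the right), and apply Theorem~\ref{p3_th7}. Your added justification that $H$ is well-defined and positive definite fills in details the paper leaves implicit.
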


\begin{theorem}\label{p3_th13}
Let $L(z)$ be analytic in $\{z\mid |z|<R\}$ for some $R>1$ and obeys~\eqref{eq:symmetry} and (i)--(iii) from Theorem~{\ref{p3_th12}}, then $L$ is the perturbation determinant for some Jacobi matrix (asymptotic to type $1$), and each such matrix has
\begin{equation*}
\limsup_{n\to\infty}\left(||B_n||+||\mathbf{1}-A_nA_n^*||\right)^{1/2n}\le R^{-1}.
\end{equation*}
\end{theorem}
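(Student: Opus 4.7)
The plan is to reduce Theorem~\ref{p3_th13} to Theorem~\ref{p3_th8} by manufacturing a candidate Jost function $u(z)$ out of $L(z)$ and then appealing to the construction already established. Set
$$W=\frac{2}{\pi}\int_0^\pi \sin^2\theta\,[L(e^{i\theta})^*L(e^{i\theta})]^{-1}d\theta,$$
and define $u(z)=L(z)W^{-1/2}$, where $W^{1/2}>\bdnot$ denotes the positive Hermitian square root. The first task is to verify that $W$ is well-defined and positive definite. Convergence of the integrand near $\theta=0,\pi$ follows from hypothesis (ii): a simple zero of $L$ at $\pm1$ forces the relevant entries of $[L(e^{i\theta})^*L(e^{i\theta})]^{-1}$ to blow up like $|\theta|^{-2}$ or $|\pi-\theta|^{-2}$, which is exactly compensated by the $\sin^2\theta$ factor. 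Positive definiteness follows from hypothesis (i), since on $\partial\bbD\setminus\{\pm1\}$ the integrand is a strictly positive Hermitian matrix on a set of positive measure.

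Next I would verify that $u$ satisfies the hypotheses of Theorem~\ref{p3_th8}. Conditions (i) and (ii) there are inherited from $L$, since right multiplication by the invertible constant matrix $W^{-1/2}$ preserves both nondegeneracy and orders of zeros. Condition (iii) of Theorem~\ref{p3_th8} holds by construction:
$$\frac{2}{\pi}\int_0^\pi \sin^2\theta\,[u(e^{i\theta})^*u(e^{i\theta})]^{-1}d\theta = W^{-1/2}\,W\,W^{-1/2}=\bdone.$$
Theorem~\ref{p3_th8} then produces a Jacobi matrix $\calJ$ (asymptotic to type~$1$) for which $u$ is the Jost function and whose parameters satisfy
$$\limsup_{n\to\infty}(||B_n||+||\bdone-A_nA_n^*||)^{1/2n}\le R^{-1}.$$
Since $u(0)=L(0)W^{-1/2}=W^{-1/2}$ by hypothesis (iii), the perturbation determinant of $\calJ$ is $u(z)u(0)^{-1}=L(z)W^{-1/2}(W^{-1/2})^{-1}=L(z)$, as desired.

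Finally, the claim that \emph{every} Jacobi matrix realizing $L$ as its perturbation determinant satisfies the same exponential estimate follows because the quantities $||B_n||$ and $||\bdone-A_nA_n^*||$ are invariant under the unitary conjugations \eqref{p1_eq2.3} that relate equivalent Jacobi matrices. The only real point of care in the whole argument is the convergence and invertibility of $W$; once that is in hand, the proof is a direct translation of Theorem~\ref{p3_th8} into the perturbation-determinant language, analogous to how Theorem~\ref{p3_th12} restates Theorem~\ref{p3_th7}.
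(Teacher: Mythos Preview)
Your approach is exactly the paper's, but there is a sign slip in the normalization. With $u(z)=L(z)W^{-1/2}$ one has
\[
[u(e^{i\theta})^*u(e^{i\theta})]^{-1}=W^{1/2}\,[L(e^{i\theta})^*L(e^{i\theta})]^{-1}\,W^{1/2},
\]
so the integral equals $W^{1/2}WW^{1/2}=W^2$, not $\bdone$. The correct choice is $u(z)=L(z)W^{1/2}$ (the paper writes $u(z)=L(z)\sqrt{H}\,\sigma$ with $H=W$ and an arbitrary unitary $\sigma$); then $[u^*u]^{-1}=W^{-1/2}[L^*L]^{-1}W^{-1/2}$ integrates to $\bdone$, and since $u(0)=W^{1/2}$ one still gets $u(z)u(0)^{-1}=L(z)$. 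With this fix the rest of your argument---inheritance of (i), (ii), invocation of Theorem~\ref{p3_th8}, and invariance of $\|B_n\|$, $\|\bdone-A_nA_n^*\|$ under the block-diagonal unitary conjugations relating any two realizations of $L$---is correct and matches the paper.
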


\begin{remarks} 1. It is clear from the proof that the corresponding measure in the above two theorems (as well as in  the two theorems below) is not uniquely defined, but all possible $d\gamma$'s are related by $d\gamma_1=v^* d\gamma_2 v$ for constant unitaries $v$.

2. In other words, every two Jacobi matrices having the same perturbation determinant are related by $\widetilde{\calJ}=U\calJ U^{-1}$, where $U$ is an $l\times l$ block diagonal unitary $U=\sigma_1\oplus\sigma_2\oplus\sigma_3\oplus\ldots$, where $\sigma_n$ are unitary with $\lim_{n\to\infty}\sigma_n=\mathbf{1}$, and $\sigma_1$ is allowed to be different from $\mathbf{1}$.
\end{remarks}
\begin{proof}[Proofs]
Pick any unitary $\sigma$ and let $u(z)=L(z)\sqrt{H} \sigma$, where
$$H=\frac2\pi \int_{0}^\pi \sin^2\theta\,\left[L(e^{i\theta})^* L(e^{i\theta})\right]^{-1}d\theta\ge \bdnot.$$
 Then
$$
\frac2\pi \int_{0}^\pi \sin^2\theta\,\left[u(e^{i\theta})^* u(e^{i\theta})\right]^{-1}d\theta=\bdone,
$$
and so Theorems \ref{p3_th7}, \ref{p3_th8} apply.
\end{proof}
\medskip

Now assume there are bound states.

Lemma~\ref{p3_lm00} gives necessary and sufficient conditions for the unique existence of a nonnegative definite solution of 
\begin{align}
&\frac{z_j}{z_j^{-1}-z_j}{w}_j f(1/{\bar{z}_j})^*=-(z_j-z_j^{-1})\res_{z= z_j} f(z)^{-1}\label{p3_eq64},\\
&\ran {w}_j=\ran \res_{z= z_j} f(z)^{-1}\label{p3_eq65}
\end{align}
(compare it with \eqref{p3_eq3.43} and \eqref{p3_eq23}).
These conditions are~\eqref{eq:cond1}, ~\eqref{eq:cond2},~\eqref{eq:cond3} with $A=f(1/{\bar{z}_j})^*$ and $B=-(z_j-z_j^{-1})\res_{z= z_j} f(z)^{-1}$.


\begin{theorem}\label{p3_th14}
A polynomial $L(z)$ is the perturbation determinant for some Jacobi matrix with $\mathbf{1}-A_nA_n^*=B_n=\mathbf{0}$ for all large $n$ if and only if it obeys~\eqref{eq:symmetry} and
\begin{itemize}
\item[(i)] $L(z)$ is invertible on $(\overline{\D}\setminus \R)\cup\{0\}$;
\item[(ii)] all zeros on $\overline{\D}\cap\R$ are simple;
\item[(iii)] for each zero $z_j$ in $\D$, ~(\ref{p3_eq64})--(\ref{p3_eq65}) has a unique nonnegative definite solution ${w}_j$ (see Lemma~\ref{p3_lm00});
\item[(iv)] $L(0)=\mathbf{1}$.
\end{itemize}
\end{theorem}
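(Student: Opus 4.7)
My plan is to reduce both directions to Theorems~\ref{p3_th5} and~\ref{p3_th10}, exploiting the fact that the Jost function $u$ differs from the perturbation determinant $L$ only by the invertible right factor $u(0)$. The method parallels the proof of Theorem~\ref{p3_th12}; the new ingredient is that bound-state weights and the canonicity condition must be carefully transported through this right multiplication.

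For the ``if'' direction, I would first use hypothesis (iii) together with Lemma~\ref{p3_lm00} to extract, for each zero $z_j\in\D\cap\R$ of $L$, the unique Hermitian non-negative $\widetilde w_j^{(L)}$ solving \eqref{p3_eq64}--\eqref{p3_eq65} with $f=L$, and set $w_j^{(L)}=(z_j^{-1}-z_j)z_j^{-1}\widetilde w_j^{(L)}$, which is again Hermitian non-negative because $(z_j^{-1}-z_j)z_j^{-1}>0$ for real $z_j\in(-1,1)\setminus\{0\}$. Now define
$$H=\frac{2}{\pi}\int_0^\pi \sin^2\theta\,[L(e^{i\theta})^*L(e^{i\theta})]^{-1}\,d\theta+\sum_j w_j^{(L)},$$
which is strictly positive definite since the integral is (by (i)--(ii)). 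Put $Q=\sqrt H$ (Hermitian positive) and $u(z)=L(z)Q$. Then $u$ is a polynomial whose zero set in $\overline\D$ coincides with that of $L$, its zeros on $\overline\D\cap\R$ are simple, and $\ker u(z_j)=Q^{-1}\ker L(z_j)$. Define weights for $u$ by $w_j^{(u)}=Q^{-1}w_j^{(L)}Q^{-1}$; these are Hermitian non-negative with $\ran w_j^{(u)}=\ker u(z_j)$, and the normalization condition in Theorem~\ref{p3_th10}(iii) becomes
$$\sum_j w_j^{(u)}+\frac{2}{\pi}\int_0^\pi \sin^2\theta\,[u(e^{i\theta})^*u(e^{i\theta})]^{-1}\,d\theta=Q^{-1}HQ^{-1}=\bdone,$$
where in the second identity we used $QQ^*=H$. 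A direct substitution of $u(z)^{-1}=Q^{-1}L(z)^{-1}$ and $u(1/\bar z_j)^*=QL(1/\bar z_j)^*$ into Definition~\ref{canonic} shows that each $w_j^{(u)}$ is canonical for $u$, precisely because $\widetilde w_j^{(L)}$ satisfies \eqref{p3_eq64}. Theorem~\ref{p3_th10} then produces a Jacobi matrix with $\bdone-A_nA_n^*=B_n=\bdnot$ for all large $n$ whose Jost function is $u$. Since $L(0)=\bdone$, we have $u(0)=Q$, and hence its perturbation determinant is $u(z)u(0)^{-1}=L(z)$.

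For the ``only if'' direction, suppose $L$ is the perturbation determinant of a Jacobi matrix $\calJ$ with $\bdone-A_nA_n^*=B_n=\bdnot$ for all large $n$, and let $u(z)=L(z)u(0)$ be its Jost function. By Theorem~\ref{p3_th1}, $u$ is a polynomial, and hypothesis (A3) holds for every $R$. Condition (iv) is immediate from the definition of the perturbation determinant; parts (iv)--(vi) of Theorem~\ref{p3_th5} yield (i) and (ii); and part (ix) supplies the identity \eqref{p3_eq3.42} for $u$ with $\widetilde w_j^{(u)}=-\res_{z=z_j}M(z)$, which is Hermitian non-negative as a positive multiple of the spectral weight $\mu(\{E_j\})$. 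Running the conjugation from the previous paragraph in reverse, with $Q:=u(0)$ in place of $\sqrt H$, yields $\widetilde w_j^{(L)}=Q\widetilde w_j^{(u)}Q^*$, which is Hermitian non-negative and satisfies \eqref{p3_eq64}--\eqref{p3_eq65} for $L$ by the substitutions $u(z)^{-1}=Q^{-1}L(z)^{-1}$ and $u(1/\bar z_j)^*=Q^*L(1/\bar z_j)^*$, together with $Q\ker u(z_j)=\ker L(z_j)=\ran\res_{z=z_j}L(z)^{-1}$ (Lemma~\ref{p4_lm0}). This is precisely condition (iii).

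The main obstacle is the bookkeeping verification that conjugation by $Q$ intertwines the canonicity identity for $u$ with \eqref{p3_eq64}--\eqref{p3_eq65} for $L$, and that the \emph{uniqueness} clauses of Lemma~\ref{p3_lm00} on the two sides really match up under this conjugation (so that Hermiticity and positivity transport). Once these algebraic transport facts are in hand, the analytic content is entirely carried by the previously established Theorems~\ref{p3_th5} and~\ref{p3_th10}.
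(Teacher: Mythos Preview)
Your proposal is correct and follows essentially the same route as the paper's own proof: define $H$ as the sum of the integral term and the weights $w_j^{(L)}$, set $u(z)=L(z)\sqrt{H}$, and transport the normalization and canonicity conditions through this right multiplication to invoke Theorem~\ref{p3_th10}. The paper's argument is terser (it treats Theorems~\ref{p3_th14} and~\ref{p3_th15} simultaneously and only spells out the ``if'' construction, leaving the ``only if'' direction implicit in Theorem~\ref{p3_th5}), but your explicit bookkeeping for both directions, and in particular your verification that conjugation by $Q$ intertwines \eqref{p3_eq64}--\eqref{p3_eq65} for $L$ with the canonicity identity for $u$, is exactly the content the paper compresses.
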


\begin{theorem}\label{p3_th15}
Let $L(z)$ be analytic in $\{z\mid |z|<R\}$ for some $R>1$. $L(z)$ is the perturbation determinant for some Jacobi matrix with $\limsup_{n\to\infty}\left(||B_n||+||\mathbf{1}-A_nA_n^*||\right)^{1/2n}\le R^{-1}$ if and only if it obeys ~\eqref{eq:symmetry} and (i), (ii), (iv), and (iii) for every $z_j$ with $1>|z_j|>R^{-1}$.

\end{theorem}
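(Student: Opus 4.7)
The strategy is to reduce Theorem \ref{p3_th15} to Theorem \ref{p3_th11} via the correspondence $u(z) = L(z)C$ between the Jost function and the perturbation determinant for some invertible constant $C$. Since $L$ and $u$ differ only by right-multiplication by $C$, the geometric conditions (non-degeneracy, simplicity of zeros, ranges of the weights) transfer freely between them; the only subtle ingredient is the normalization $\sum_j w_j + \frac{2}{\pi}\int_0^\pi\sin^2\theta\,[u(e^{i\theta})^* u(e^{i\theta})]^{-1}d\theta = \bdone$ required by Theorem \ref{p3_th10}. In the \emph{necessity} direction, if $L(z) = u(z;\calJ)u(0;\calJ)^{-1}$ for a Jacobi matrix $\calJ$ satisfying $(A3)$, Lemma \ref{p3_lm4} guarantees that $u(0;\calJ) = \stackrel{\curvearrowleft}{\prod_{n=1}^\infty} A_n^{-1}$ is invertible, and Theorem \ref{p3_th5} --- in particular part (ix) --- supplies everything needed to verify (i), (ii), (iv) and to obtain $\widetilde w_j^L = u(0;\calJ)\widetilde w_j^u u(0;\calJ)^* \ge \bdnot$ as the canonical solution of \eqref{p3_eq64}--\eqref{p3_eq65} for $L$ at each $z_j$ with $1 > |z_j| > R^{-1}$.

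For \emph{sufficiency}, starting from $L$ satisfying (i), (ii), (iv), and (iii) at each $z_j$ with $1 > |z_j| > R^{-1}$, I will produce $u = LC$ satisfying the hypotheses of Theorem \ref{p3_th11}. For each zero $z_j$ of $L$ in $\D$ choose a Hermitian non-negative $\widetilde w_j^L$ with $\ran \widetilde w_j^L = \ker L(z_j)$: via \eqref{p3_eq64}--\eqref{p3_eq65} when $|z_j| > R^{-1}$ (non-negative by (iii)), and any such matrix otherwise. Set $w_j^L = (z_j^{-1} - z_j)z_j^{-1}\widetilde w_j^L$ and
\begin{equation*}
H = \sum_j w_j^L + \frac{2}{\pi}\int_0^\pi \sin^2\theta\,[L(e^{i\theta})^* L(e^{i\theta})]^{-1}d\theta,
\end{equation*}
which is strictly positive definite since the integrand is strictly positive a.e.\ by (i)--(ii). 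With $C = H^{1/2}$ and $u(z) = L(z)C$, the identities $u(1/\bar z_j)^* = C L(1/\bar z_j)^*$, $\res_{z=z_j} u(z)^{-1} = C^{-1}\res_{z=z_j} L(z)^{-1}$, and $[u^*u]^{-1} = C^{-1}[L^*L]^{-1}C^{-1}$ yield $\widetilde w_j^u = C^{-1}\widetilde w_j^L C^{-1} \ge \bdnot$, $\ran w_j^u = \ker u(z_j)$, canonicity of $\widetilde w_j^u$ at $|z_j| > R^{-1}$, and $\sum_j w_j^u + \frac{2}{\pi}\int_0^\pi\sin^2\theta\,[u(e^{i\theta})^* u(e^{i\theta})]^{-1}d\theta = C^{-1}HC^{-1} = \bdone$. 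Theorem \ref{p3_th11} then produces a Jacobi matrix with $\limsup(||B_n|| + ||\bdone - A_nA_n^*||)^{1/2n} \le R^{-1}$ whose Jost function is $u$, and whose perturbation determinant is $u(z)u(0)^{-1} = L(z)$.

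The main obstacle is essentially bookkeeping --- tracking how the conjugation by $C$ respects positivity, ranges, and the canonicity formula when passing between $L$ and $u$. The one slightly delicate point is the freedom at zeros with $|z_j| \le R^{-1}$: there $1/\bar z_j$ lies outside the disk $\bbD_R$ of analyticity of $L$, so the canonicity formula \eqref{p3_eq64}--\eqref{p3_eq65} is meaningless. This is precisely why Theorem \ref{p3_th15} omits (iii) at such points, and it is harmless to prescribe arbitrary non-negative weights there since Theorem \ref{p3_th11} only demands canonicity at zeros with $1 > |z_j| > R^{-1}$.
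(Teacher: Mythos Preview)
Your proposal is correct and follows essentially the same route as the paper: reduce to Theorem~\ref{p3_th11} by setting $u(z)=L(z)H^{1/2}$ (the paper writes $u=L\sqrt{H}\,\sigma$ with an extra unitary $\sigma$, which is immaterial), where $H$ is built from the canonical weights at $|z_j|>R^{-1}$, arbitrary non-negative weights at the remaining zeros, and the integral term, so that the normalization and canonicity conditions transfer. Your write-up is in fact somewhat more explicit than the paper's --- you spell out the necessity direction via Theorem~\ref{p3_th5}(ix) and the conjugation formula $\widetilde{w}_j^L=u(0)\,\widetilde{w}_j^u\,u(0)^*$, which the paper leaves implicit.
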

\begin{proof}[Proofs]
Denote $v_j$ to be the nonnegative solutions of (\ref{p3_eq64})--(\ref{p3_eq65}) corresponding to $1>|z_j|>R^{-1}$. For the rest of $z_j$'s pick any nonnegative $v_j$. Let ${w}_j =\sigma ^* H^{-1/2}  v_j H^{-1/2} \sigma \ge\mathbf{0}$, and $u(z)=L(z)\sqrt H \sigma$, where $\sigma$ is any unitary matrix, and
$$
H=\sum_j v_j+\frac2\pi \int_{0}^\pi \sin^2\theta\,\left[L(e^{i\theta})^* L(e^{i\theta})\right]^{-1}d\theta \ge \bdnot.
$$
Then
$$
\sum_j w_j+\frac2\pi \int_{0}^\pi \sin^2\theta\,\left[u(e^{i\theta})^* u(e^{i\theta})\right]^{-1}d\theta= \sigma ^* H^{-1/2} H H^{-1/2} \sigma=\bdone.
$$
Moreover, ${w}_j$ solves (\ref{p3_eq64})--(\ref{p3_eq65}) with $f$ replaced by $u$ for every $1>|z_j|>R^{-1}$. This means that the condition (iii) of Theorem \ref{p3_th10}/\ref{p3_th11} holds, and all the weights for $z_j$ with $1>|z_j|>R^{-1}$ are canonical. Thus Theorems \ref{p3_th10}/\ref{p3_th11} apply and we are done.
\end{proof}


\section{Meromorphic Continuations of Matrix Herglotz Functions and Perturbations of the Free Case}
\begin{subsection}{Proof of Theorems \ref{p3_th17} and \ref{p3_th18}}
\begin{proof}[Proof of Theorem \ref{p3_th17}]
(I)$\Rightarrow$(II) Assume (I) holds. (A) follows from Theorem \ref{p3_th5} (viii).
(B) follows from Theorem \ref{p3_th5} (vi) and (v).
(C) is immediate from \eqref{p3_eq25}.

Now let us show
(D). First of all, it is a straightforward calculation to see that for any $F$ with a first order pole,
\begin{equation}\label{p3_eq_res}
\res_{z=\bar{z}^{-1}_0} F^\sharp (z)= -\frac{1}{\bar{z}_0^2} (\res_{z=z_0} F(z))^*.
\end{equation}

Since $u(z;\calJ)$ is analytic at $z_j^{-1}$, then using \eqref{p3_eq25},
\begin{equation*}
\bdnot=\res_{z=z_j^{-1}} u(z;\calJ) = (z_j^{-1}-z_j) \res_{z=z_j^{-1}} u^\sharp(z;\calJ)^{-1} (M(z_j^{-1})-M^\sharp(z_j^{-1}))^{-1},
\end{equation*}
which implies
\begin{equation*}\label{p3_eq3.61n}
\ran (M(z_j^{-1})-M^\sharp(z_j^{-1}))^{-1} \subseteq \ker \res_{z=z_j^{-1}} u^\sharp(z;\calJ)^{-1}.
\end{equation*}
Now,
\begin{align*}
\ker \res_{z=z_j^{-1}} u^\sharp(z;\calJ)^{-1}=\ker \res_{z=z_j} u(z;\calJ)^{-1}{}^*=\ran u(z_j;\calJ)^*&=\ker u(z_j;\calJ){}^\perp  \\
&=\ran \widetilde{w}_j{}^\perp = \ran\res_{z=z_j}M(z)^\perp,
\end{align*}
and $\ran (M(z_j^{-1})-M^\sharp(z_j^{-1}))^{-1}=\big[\ker(M(z_j^{-1})-M^\sharp(z_j^{-1}))^{-1}\big]^{\perp}$ since $M$ is Hermitian on the real line. This gives \eqref{p3_eq5.4}.

Note that $(M(z)-M^\sharp(z))^{-1} M(z)=\bdone+(M(z)-M^\sharp(z))^{-1} M^\sharp(z)$ is analytic at $z_j^{-1}$ since $(M(z)-M^\sharp(z))^{-1} M^\sharp(z)$ is analytic at $z_j^{-1}$ by \eqref{p3_eq5.4}.

Now, by \eqref{p3_eq22}, $u(z;\calJ)M(z)$ must be analytic at $z_j^{-1}$.
Then
using \eqref{p3_eq25},
\begin{equation*}
\bdnot=\res_{z=z_j^{-1}} u(z;\calJ) M(z)=  (z_j^{-1}-z_j) \res_{z=z_j^{-1}}   u^\sharp(z;\calJ)^{-1} (M(z)-M^\sharp(z))^{-1} M(z),
\end{equation*}
which implies $\ran (M(z_j^{-1})-M^\sharp(z_j^{-1}))^{-1} M(z_j^{-1}) \subseteq \ker \res_{z=z_j^{-1}}   u^\sharp(z;\calJ)^{-1}=\ran u^\sharp (z_j^{-1};\calJ) =\ran \widetilde{w}_j {}^\perp$, which is \eqref{p3_eq5.5}.

\bigskip

(II)$\Rightarrow$(I) Now assume (A)--(D) holds. Because of (A), $M$ has only finitely many poles $\{z_j\}$ in $\bbD$, all of which are real and simple since $M$ is Herglotz (see \cite{Ges}). Let $\widetilde{w}_j=-\res_{z=z_j} M(z)$.

Now we construct a function $u$ as described in Theorem \ref{p3_th7} and the remarks after it. First, there exists an outer function $O$ satisfying \eqref{p3_eq3.56}  by the Wiener--Masani
theorem (Lemma \ref{p2_lm2}) since Szeg\H{o}'s condition \eqref{p3_eq3.57} trivially holds. Then form a matrix-valued Blashcke product $B=\prod_j B_{z_j,s_j,U_j}$ with $s_j=\dim \ran \widetilde{w}_j$, where we pick unitary matrices $U_j$ so that $\ker B(z_j)O(z_j)=\ran \widetilde{w}_j$ (see Lemma \ref{product}). Now put $u(z)=B(z)O(z)$, which is an $\bbH^2(\bbD)$-function.
%

Define
\begin{equation}\label{p3_eq5.6}
\widehat{u}(z)=(z-z^{-1}) u^\sharp(z){}^{-1} (M(z)-M^\sharp(z))^{-1}, \quad 1<|z|<R.
\end{equation}
Since by the construction $u(e^{i\theta})^* u(e^{i\theta})=\sin\theta (\imag M(e^{i\theta}))^{-1}$, we have $\widehat{u}(e^{i\theta})=u(e^{i\theta})$, where the values of $u,\widehat{u}$ on $\partial \bbD$ are meant in the sense of nontangential limits. Now note by (C), $\sin\theta(\imag M(e^{i\theta}))^{-1}$ is continuous, and therefore $\sup_{z\in\partial\bbD}||u(z)||<\infty$. By the Smirnov maximum principle for matrix-valued functions (see \cite{Ginz3}), $\sup_{z\in\bbD}||u(z)||\le \sup_{z\in\partial\bbD}||u(z)||<\infty$, i.e., $u$ is bounded on $\bbD$. Note that 
$u^{-1}$ is bounded on a neighborhood of any point of $\partial\bbD\setminus \{\pm1\}$, and then so is $\widehat{u}$ by \eqref{p3_eq5.6}. Therefore 
Schwarz reflection principle allows us to conclude that $\widehat{u}$ is a meromorphic continuation of $u$. Since $u$ is bounded on $\overline{\bbD}$, $\pm1$ must be removable singularities.

Note that by (B), $M(z)-M^\sharp(z)$ in regular on $\partial\bbD\setminus\{\pm1\}$ with at most simple poles at $\pm1$. Therefore \eqref{p3_eq5.6} proves that $u$ has no zeros on $\partial\bbD\setminus\{\pm1\}$ with at most simple zeros at $\pm1$.

Thus $u$ satisfies all of the conditions of Theorem \ref{p3_th_constr} (with $w_j=(z_j^{-1}-z_j)z_j^{-1} \widetilde{w}_j$), and it's clear that the unique measure $\mu$ of Theorem \ref{p3_th_constr} is the measure corresponding to $M$. In order to apply Theorem \ref{p3_th11} we need to show that $u$ is analytic (rather than just meromorphic) in $\bbD_R$, and that the weights for those $z_j$ with $1>|z_j|>R^{-1}$ are canonical.

\eqref{p3_eq5.6} shows that singularities of $u$ can only happen at $z_j^{-1}$, in which case they are simple poles.  Note that \eqref{p3_eq5.4} can be rewritten as
\begin{equation*} \label{p3_eq5.7}
\begin{aligned}
\ran (M(z_j^{-1})-M^\sharp(z_j^{-1}))^{-1}& = \left( \ker (M(z_j^{-1})-M^\sharp(z_j^{-1}))^{-1} \right)^\perp \supseteq \ran \widetilde{w}_j {}^\perp = \ker u(z_j) {}^\perp \\
&=\ran u(z_j) = \ran u^\sharp(z^{-1}_j) =\ker \res_{z=z_j^{-1}}u^\sharp(z)^{-1},
\end{aligned}
\end{equation*}
where in the second-to-last equality we used \eqref{p3_eq_res}. This and \eqref{p3_eq5.6} imply
$$
\res_{z=z_j^{-1}} u(z)=  (z_j-z_j^{-1}) \res_{z=z_j^{-1}} u^\sharp(z)^{-1} (M(z_j^{-1})-M^\sharp(z_j^{-1}))^{-1}=\bdnot,
$$
i.e., there is no pole at $z_j$, i.e., $u$ is analytic in $\bbD_R$.

By the remark after \eqref{one} we will establish that all the weights are canonical if we show that $u^{(1)}(z)=z^{-1}u(z)M(z)A_{n+1}$ is analytic at $z_j^{-1}$.
This is what \eqref{p3_eq5.5} is for.

First of all, note that $\ran\res_{z=z_j^{-1}} M^\sharp(z)=\ran\res_{z=z_j}M(z)$ (just use \eqref{p3_eq_res} and $\widetilde{w}_j=\widetilde{w}_j^*$), so \eqref{p3_eq5.4} implies that $(M(z)-M^\sharp(z))^{-1} M(z)=\bdone+(M(z)-M^\sharp(z))^{-1} M^\sharp(z)$ is analytic at $z_j^{-1}$. This justifies that the use of the expression in \eqref{p3_eq5.5}.
Now note that \eqref{p3_eq5.5} can be rewritten as
\begin{equation*}\label{p3_eq5.75}
\ran (M(z^{-1}_j)-M^\sharp(z^{-1}_j))^{-1} M(z^{-1}_j) \subseteq \ran \widetilde{w}_j {}^\perp = \ker \res_{z=z_j^{-1}}u^\sharp(z)^{-1},
\end{equation*}
which implies that $u^\sharp(z_j^{-1})^{-1}(M(z^{-1}_j)-M^\sharp(z^{-1}_j))^{-1} M(z^{-1}_j)$ is analytic. By \eqref{p3_eq5.6} this is $u(z_j^{-1})M(z_j^{-1})$.

Theorem \ref{p3_th11} applies, giving (I).
\end{proof}

\begin{proof}[Proof of Theorem \ref{p3_th18}]
That (I) implies (II) is clear from \eqref{p3_eq22} and the fact that $u$ and $u^{(1)}$ are polynomials.

Assume (II) holds. Then, going through the proof of the previous theorem, note that $u$ is entire and by \eqref{p3_eq5.6} grows at most polynomially. Therefore it is a polynomial, and so Theorem \ref{p3_th10} applies.
\end{proof}
\end{subsection}

In the remarks after Theorems \ref{p3_th17} and \ref{p3_th18} we mentioned that condition (D) can be restated in a better-looking form in some special cases. Let us prove it here.
\begin{proposition}\label{propos}
\begin{itemize}
\item If $M$ has a pole of the first order at $z_j^{-1}$ then (D) is equivalent to
\begin{gather}
\label{p3_eq3.55n}
\ran \widetilde{w}_j \subseteq \ran (\widetilde{w}_j-z_j^2 \widetilde{q}_j),\\
\label{p3_eq3.56n}
\ran \widetilde{w}_j \cap \ran \widetilde{q}_j=\varnothing,
\end{gather}
where $\widetilde{w}_j=-\res_{z=z_j} M(z)$, $\widetilde{q}_j=\res_{z= z^{-1}_j} M(z)$.
\item If $l=1$, then (D) is equivalent to the condition that $M$  has no simultaneous singularities at points $z_j$ and $z_j^{-1}$.
\end{itemize}
\end{proposition}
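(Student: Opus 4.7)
The plan is to Laurent-expand $M$, $M^\sharp$, and $P := (M-M^\sharp)^{-1}$ at $z_j^{-1}$, use condition (C) together with the assumption of first-order poles to control the Smith--McMillan exponents of $M-M^\sharp$, and then reduce the second clause of (D) to a concrete linear-algebraic lemma about positive semi-definite matrices.

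First, using \eqref{p3_eq_res}, the Hermiticity of $\widetilde{w}_j$, and $z_j \in \bbR$, I would compute $\res_{z=z_j^{-1}} M^\sharp = \widetilde{w}_j/z_j^2$, so that $A := \res_{z=z_j^{-1}}(M-M^\sharp) = \widetilde{q}_j - \widetilde{w}_j/z_j^2$ is Hermitian with $\ran A = \ran(\widetilde{w}_j - z_j^2 \widetilde{q}_j)$. Under the first-order-pole hypothesis together with (C), the local Smith--McMillan exponents of $M-M^\sharp$ at $z_j^{-1}$ lie in $\{-1,0\}$, and a direct reading off Lemma \ref{p4_lmSM} gives $\ker P_0 = \ran A$ and $\ran P_0 = \ker A$, where $P_0 := P(z_j^{-1})$. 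The first clause of (D) thus becomes $\ran \widetilde{w}_j \subseteq \ran(\widetilde{w}_j - z_j^2 \widetilde{q}_j)$ verbatim.

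Assume now the first clause holds. Since $M$ is Hermitian on $\bbR$, so is each Laurent coefficient of $P$ at $z_j^{-1}$; in particular $\widetilde{w}_j P_0 = 0$, and $P_0 \widetilde{q}_j = P_0 A + P_0 \widetilde{w}_j/z_j^2 = 0$, giving analyticity of $PM = \bdone + PM^\sharp$ at $z_j^{-1}$. Expanding $(M-M^\sharp)P = \bdone$ to order one yields $AP_1 = \bdone - \Delta P_0$ with $\Delta$ the regular part of $M-M^\sharp$, and combined with the Laurent expression $(PM)(z_j^{-1}) = P_1 \widetilde{q}_j + P_0 M_0$ and $\widetilde{w}_j P_0 = 0$ this gives
\[
\widetilde{w}_j (PM)(z_j^{-1}) = \widetilde{w}_j P_1 \widetilde{q}_j.
\]
Since $A(P_1 \widetilde{q}_j) = \widetilde{q}_j - \Delta P_0 \widetilde{q}_j = \widetilde{q}_j$, the matrix $P_1 \widetilde{q}_j$ solves $AX = \widetilde{q}_j$; and because $\widetilde{w}_j$ kills $\ker A = (\ran A)^\perp$, the product $\widetilde{w}_j P_1 \widetilde{q}_j$ coincides with $w A_V^{-1} q$, where $V := \ran A$, $A_V := A|_V$ is invertible, and $w, q$ denote the restrictions of $\widetilde{w}_j, \widetilde{q}_j$ to $V$ (both self-adjoint and nonnegative there, since their ranges lie in $V$).

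The heart of the argument is then the lemma: for positive semi-definite $w, q$ on a finite-dimensional space $V$ with $A_V := q - w/z_j^2$ invertible, $w A_V^{-1} q = 0$ iff $\ran w \cap \ran q = \{0\}$. For $(\Rightarrow)$: take $0 \neq y = qv = wu$ and set $\xi := A_V^{-1} y$; the hypothesis forces $w\xi = 0$, so $y = A_V\xi = q\xi$ and $\langle q\xi, \xi\rangle = \langle wu, \xi\rangle = \langle u, w\xi\rangle = 0$, whence $y = q\xi = 0$ by positivity of $q$, a contradiction. For $(\Leftarrow)$: for any $x$, set $\xi := A_V^{-1}(qx)$; then $q(\xi - x) = w\xi/z_j^2 \in \ran q \cap \ran w = \{0\}$, forcing $w\xi = 0$. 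This completes the matrix case. For the scalar case, if $M$ is analytic at $z_j^{-1}$ then $M^\sharp$ still has a simple pole there, so $P_0 = 0$ and $(PM)(z_j^{-1}) = 0$, and (D) holds trivially; if $M$ has a pole at $z_j^{-1}$ of some order $k \geq 1$, a brief case analysis on $k$ and on possible residue cancellations shows that either $P_0 \neq 0$ (violating the first clause) or $(PM)(z_j^{-1})$ is a nonzero scalar (violating the second clause). The main obstacle throughout is the algebraic lemma above, which genuinely uses positive semi-definiteness of $w, q$ rather than merely their range structure.
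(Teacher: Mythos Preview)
Your approach is correct in outline and genuinely different from the paper's. The paper exploits the Jost function: it rewrites condition \eqref{p3_eq5.5} as $u(z_j^{-1};\calJ)\,\widetilde{q}_j=\bdnot$ and then identifies $\ker u(z_j^{-1};\calJ)$ via the chain
\[
\ker u(z_j^{-1};\calJ)=\ran\res_{z=z_j^{-1}}u(z;\calJ)^{-1}=\ran\res_{z=z_j^{-1}}(M-M^\sharp)u^\sharp=\ran\widetilde{q}_j\,u(z_j;\calJ)^*,
\]
using \eqref{p3_eq25}, \eqref{p3_eq23}, and Lemma~\ref{p4_lm0}; this immediately yields $\ran\widetilde{q}_j\cap\ker u(z_j;\calJ)=\varnothing$, i.e.\ \eqref{p3_eq3.56n}. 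Your route bypasses $u$ entirely and stays at the level of Laurent data of $M$ and $P=(M-M^\sharp)^{-1}$, which is more self-contained; the price is the algebraic lemma at the end.

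There is one genuine slip. You assert that $q=\widetilde{q}_j|_V$ is nonnegative ``since their ranges lie in $V$''; this implication is false, and there is no reason for $\widetilde{q}_j=\res_{z=z_j^{-1}}M(z)$ to be semidefinite in general (only Hermitian, which does follow from the reflection symmetry $M(\bar z)=M(z)^*$ continued to $\bbD_R$). Your proof of the $(\Rightarrow)$ direction of the lemma uses $q\ge 0$ in the step ``$\langle q\xi,\xi\rangle=0\Rightarrow q\xi=0$''. The fix is short: since $w,q,A_V$ are all Hermitian, $wA_V^{-1}q=0$ implies $qA_V^{-1}w=0$ as well; hence for $y=wu=qv\in\ran w\cap\ran q$ one gets $w\xi=q\xi=0$ with $\xi=A_V^{-1}y$, so $A_V\xi=q\xi-w\xi/z_j^2=0$ and $y=0$. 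With this correction your argument goes through, and the scalar case analysis you sketch matches the paper's.
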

\begin{proof}
If $M$ has a first order pole at $z_j^{-1}$, then we can apply Lemma \ref{p4_lm0}  to the analytic function $(M-M^\sharp)^{-1}$ and see that \eqref{p3_eq5.4} can be rewritten as
\begin{equation*}\label{p3_eq3.62n}
\ran \widetilde{w}_j \subseteq \ran\res_{z=z_j^{-1}} M(z)-M^\sharp(z)=\ran (\widetilde{q}_j-\frac{1}{z_j^2}\widetilde{w}_j),
\end{equation*}
where we used \eqref{p3_eq_res} and the fact that  and $\widetilde{w}_j$ and $\widetilde{q}_j$ are Hermitian.

Now note that \eqref{p3_eq5.5} is equivalent to
\begin{equation*}
\bdnot=\res_{z=z_j^{-1}} u(z;\calJ) M(z)=  u(z_j^{-1};\calJ) \res_{z=z_j^{-1}} M(z),
\end{equation*}
which means
\begin{equation}\label{p3_eq3.64n}
\begin{aligned}
\ran \widetilde{q}_j \subseteq \ker u(z_j^{-1};\calJ)&= \ran \res_{z=z_j^{-1}} u(z;\calJ)^{-1}= \ran \res_{z=z_j^{-1}} (M(z)-M^\sharp(z)) u^\sharp(z;\calJ) \\
&= \ran(\widetilde{q}_j-\frac{1}{z_j^2}\widetilde{w}_j) u(z_j;\calJ)^* = \ran \widetilde{q}_j u(z_j;\calJ)^*
\end{aligned}
\end{equation}
where we successively used here: Lemma \ref{p4_lm0}, \eqref{p3_eq25}, \eqref{p3_eq_res}, and \eqref{p3_eq23}. Finally, note that \eqref{p3_eq3.64n} is equivalent to $\ker \widetilde{q}_j\supseteq \ker u(z_j;\calJ) \widetilde{q}_j$, i.e., $\ran \widetilde{q}_j \cap \ker u(z_j;\calJ)=\varnothing$, which is \eqref{p3_eq3.56n} by \eqref{p3_eq23}.

\bigskip

Now let $l=1$, and assume $M$ is pole of order $1$ at $z_j\in\bbD$ (it cannot have higher order poles there), and of order $k\ge1$ at $z_j^{-1}$. Then $\lim_{z\to z_j^{-1}}(1-\frac{M^\sharp(z)}{M(z)})$ is finite, so $\lim_{z\to z_j^{-1}}(1-\frac{M^\sharp(z)}{M(z)})^{-1}$  is nonzero (and it actually cannot be infinite by \eqref{p3_eq5.4}). Therefore the right-hand side of \eqref{p3_eq5.5} becomes $\left(\ran \lim_{z\to z_j^{-1}}(1-\frac{M^\sharp(z)}{M(z)})^{-1} \right)^\perp=\{0\}$. But the left-hand side is $\bbC$, a contradiction.
\end{proof}


\end{document}